\numberwithin{equation}{section}
\newtheorem{theorem}{Theorem}[section]
\newtheorem{prop}[theorem]{Proposition}
\newtheorem{definition}[theorem]{Definition}
\newtheorem{lem}[theorem]{Lemma}
\theoremstyle{remark}
\newtheorem{remark}[theorem]{Remark}
\newtheorem{rmk}[theorem]{Remarks}
\def\M{\mathsf{M}}
\def\sM{\mathscr{M}}
\def\p{\mathsf{p}}
\def\cH{\mathcal{H}}
\def\cE{\mathcal{E}}
\def\R{\mathbb{R}}
\def\C{\mathbb{C}}
\def\N{\mathbb{N}}
\def\Nz{\mathbb{N}_0}
\def\L{\mathcal{L}}
\def\Lis{\mathcal{L}{\rm{is}}}
\def\B{\mathbb{B}}
\def\cA{\mathcal{A}}
\def\cC{\mathcal{C}}
\def\Rp{{\rm{Re}}}
\def\RS{\mathcal{RS}}
\def\S{\mathcal{S}}
\def\sN{\mathscr{N}}
\def\Ric{{\rm Ric}}
\def\T{{\sf T}}
\def\min{{\rm min}}
\def\max{{\rm max}}
\def\dom{{\rm dom}}
\def\sN{{\sf N}}
\def\cO{\mathcal{O}}
\def\sep{s_{\varepsilon,p}}
\def\gep{\gamma_{\varepsilon,p}}
\begin{document}

\title[The harmonic map heat flow on conic manifolds]{The harmonic map heat flow on conic manifolds}

\author[Y. Shao]{Yuanzhen Shao}
\address{Department of Mathematical Sciences,
         Georgia Southern University, 
         65 Georgia Avenue, 
         P.O. Box 8104, 
         Statesboro, GA 30460, USA}
\email{yshao@georgiasouthern.edu}

\author[C. Wang]{Changyou Wang}
\address{Department of Mathematics,
         Purdue University, 
         150 N. University Street, 
         West La\-fayet\-te, IN 47907-2067, USA}
\email{wang2482@purdue.edu}

\subjclass[2010]{}
\keywords{}

\begin{abstract}
In this article, we study the the harmonic map heat flow from a manifold with conic singularities to a closed manifold. In particular, we have proved the short time existence and uniqueness of solutions as well as the existence of global solutions into manifolds with nonpositive sectional curvature. These results are established in virtue of the maximal regularity theory on manifolds with conic singularities.
\end{abstract}
\maketitle

\section{\bf Introduction}


The construction of an $(n+1)$-dimensional conic manifold $\M$ starts from a $C^\infty$-compact manifold $(\tilde{\M}, \tilde{g})$ with boundary. Let $(\B,g_\B)=(\partial\tilde{\M}, \tilde{g}_{\partial\tilde{\M}})$.
Note that $\B$ does not need to be connected.
We equip $\M=\tilde{\M}\setminus \B$ with a smooth metric $g$ such that in a closed collar neighbourhood $(0,1]\times \B$ of the boundary
$$
g(x,y)= dx^2 + x^2 g_{\B}(y),\quad (x,y)\in (0,1]\times\B.
$$
Here $x$ is a boundary defining function of $\tilde{\M}$. Outside $(0,1]\times \B$, $g$ is equivalent to $\tilde{g}$. 
The resulting  $C^\infty$-manifold $(\M,g)$ is called a {\em conic manifold}, and 
the set $\{x=0\}$ is called the set of {\em conic singularities}.



In their seminal work \cite{EelSam64}, J. Eells and J.~H. Sampson initiated the study of the   harmonic map heat flow between closed manifolds. 
In our article, a closed manifold  always refer to one that is compact and without boundary.

The goal of this article is to study the harmonic map heat flow from an $(n+1)$-dimensional conic manifold $(\M,g)$ to an $m$-dimensional smooth closed manifold $(\sN,h)$.
We will assume that $(\sN,h)$ is isometrically embedded into a Euclidean space $\R^L$, and thus can be viewed as a Riemannian submanifold of $\R^L$. 

For each $u:\M\to\sN$, the harmonic map heat flow aims at finding $u:[0,T)\times \M\to \sN$ solving the following equation:
\begin{equation}
\label{S1: HHF}
\left\{\begin{aligned}
\partial_t u - \Delta_g u &=A_g(u)(\nabla u, \nabla u)  &&\text{on }&&\M_T;\\
u(0)&=u_0  &&\text{on}&&\M,&&
\end{aligned}\right.
\end{equation}
where $\Delta_g$ and $\nabla$ denote the Laplace-Beltrami operator on $(\M,g)$ and the gradient operator on $\M$, respectively, $A(\cdot)$ is the second fundamental form of $\sN$ in $\mathbb R^L$, $A_g(u)(\nabla u,\nabla u)=g^{\alpha\beta}A(u)(\frac{\partial u}{\partial x_\alpha}, \frac{\partial u}{\partial x_\beta})$,
and $\M_T:=(0,T)\times\M$ for some $T>0$.

The harmonic map heat flow is the negative $L_2$-gradient flow of the Dirichelet energy:
\begin{equation}
\label{S1: Energy}
E(u)=\frac{1}{2}\int_\M |\nabla u|_g^2 d v_g,
\end{equation}
where $|\nabla u|_g^2=|\nabla u|^2_{g^*\otimes h(u)} = g^{\alpha\beta}h_{ij}(u) \frac{\partial u^i}{\partial x_\alpha}\frac{\partial u^j}{\partial x_\beta}$ with $g^*$ being the induced metric on the cotangent bundle $T^*\M$ by $g$, and $(u^i)_{i=1,\cdots,m}$ and $(x_\alpha)_{\alpha=1,\cdots,n+1}$ denote the local coordinates of $\sN$ and $\M$. In addition, $v_g$ stands for the volume element of $(\M,g)$.

The critical points of \eqref{S1: Energy} are called harmonic maps. A problem of particular interest in geometric analysis is that, given any smooth $u:\M\to \sN$, can we deform $u$ into a harmonic map that is homotopic to $u$?

The harmonic map heat flow~\eqref{S1: HHF} is aiming at studying the above question by deforming  $u_0$ continuously along the flow to find the critical points of \eqref{S1: Energy}.

When both the target manifold $\sN$ and the domain manifold $\M$ are closed, J.~Eells and J.~H. Sampson proved in \cite{EelSam64} that any smooth initial date $u_0\in C^\infty(\M,\sN)$ admits a unique local in time smooth solution to \eqref{S1: HHF}. However, certain curvature condition on the target manifold $\sN$ is essential for their proof for the existence of  global smooth solutions.
In \cite{LinWang99}, F. Lin and the second author of this article have obtained an alternative condition on the existence of  global smooth solutions, which is different from the curvature condition on $\sN$.
We would like to refer the reader to \cite{LinWang08} for a thorough survey of the problem between compact manifolds.

R. Hamilton studied the harmonic map heat flow on compact manifolds with smooth boundary in his book \cite{Ham84}. The study of \eqref{S1: HHF} on complete and noncompact domain manifolds was initiated by R. Schoen and S.-T Yau \cite{SchYau76}. In \cite{SchYau76}, the authors showed that any $C^1$-map with finite total energy from a complete noncompact $\M$ into a complete manifold $\sN$ with nonpositive sectional curvature can be deformed into a harmonic map along the flow \eqref{S1: HHF}. 
Later, many authors considered this problem under various conditions on the complete noncompact domain manifold $\M$ and the initial datum $u_0$.
Interested reader may refer to \cite{FarReg00, Li93, LiTam91,Wang94, MWang08} for more details of the harmonic map heat flow on complete noncompact domain manifolds.

As far as we know, our paper is the first one on the study of the harmonic map heat flow on domain manifolds with singularities. The main tool in our paper is the maximal $L_p$-regularity theory for nonlinear parabolic equations. 
Our main results are the following two theorems.
 

\begin{theorem}
\label{Main theorem}
Suppose that $(\M,g)$ is an $(n+1)$-dimensional conic manifold  and $(\sN,h)$ is an $m$-dimensional smooth closed manifold.
Assume that 
\begin{itemize}
\item $\gamma$ and $p$ satisfy (B0) and (B3) in Section~\ref{Section 4} when $n\geq 3$; or
\item when $n=1,2$,  $\gamma$ and $p$ satisfy (A) in Section~\ref{Section 4} or satisfy (B0)-(B1) in Section~\ref{Section 4} for $n=1$ or  satisfy (B0) and (B2) in Section~\ref{Section 4} for $n=2$. 
\end{itemize}
Let 
$$
u_0=w_0+\p\quad \text{with}\quad \p\in\sN \quad \text{and} \quad w_0\in \cH_p^{2-\frac{2}{p}+\delta, \gamma+2-\frac{2}{p}+\delta}(\M,\sN-\p)
$$ 
for an arbitrary positive constant $\delta$. Then \eqref{S1: HHF} with initial condition $u_0$ has a unique solution
$$
u\in L_p(J_T, \cH^{2,\gamma+2}_p(\M,\R^L)\oplus \R^L)\cap H^1_p(J_T,\cH^{0,\gamma}_p(\M,\sN))
$$
on $J_T=[0,T)$ for some $T>0$. Moreover, 
$
u\in  C^\infty((0,T)\times \M, \sN).
$
\end{theorem}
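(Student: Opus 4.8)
The plan is to turn the geometric flow into a semilinear Cauchy problem on the weighted Mellin--Sobolev scale and to solve it by a contraction argument built on the maximal $L_p$-regularity of $-\Delta_g$. Since $\sN\subset\R^L$ is isometrically embedded, I view $u$ as an $\R^L$-valued map and write $u=\p+w$. As $\p$ is constant, $\nabla u=\nabla w$, so $w$ solves
\begin{equation*}
\partial_t w-\Delta_g w=F(w):=A_g(\p+w)(\nabla w,\nabla w),\qquad w(0)=w_0 .
\end{equation*}
The natural solution class is $\bE_1:=L_p(J_T,\cH^{2,\gamma+2}_p)\cap H^1_p(J_T,\cH^{0,\gamma}_p)$, whose temporal trace space at $t=0$ is $\cH^{2-2/p,\,\gamma+2-2/p}_p$; the extra $\delta$ assumed on $w_0$ provides the regularity margin that will be consumed by the quadratic gradient nonlinearity.

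First I would invoke the maximal $L_p$-regularity of $-\Delta_g$ on these spaces, established in Section~\ref{Section 4} under (A) and (B0)--(B3). This says precisely that the evolution operator together with the time-$0$ trace is a topological isomorphism
\begin{equation*}
(\partial_t-\Delta_g,\ \mathrm{tr}_{t=0})\colon \bE_1 \xrightarrow{\ \cong\ } L_p(J_T,\cH^{0,\gamma}_p)\times \cH^{2-2/p,\,\gamma+2-2/p}_p .
\end{equation*}
Thus the linear part is inverted once and for all, and the problem reduces to a fixed-point equation $w=L^{-1}\bigl(F(w),w_0\bigr)$ in $\bE_1$.

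The crux is the nonlinear mapping estimate: I must show that $F\colon\bE_1\to L_p(J_T,\cH^{0,\gamma}_p)$ is well defined and locally Lipschitz, with a Lipschitz constant that tends to $0$ as $T\to0$ on bounded balls of $\bE_1$. This rests on two structural properties of the Mellin--Sobolev spaces, for which the arithmetic conditions (A)/(B0)--(B3) are exactly tailored: that the relevant trace space of $\nabla w$ embeds into $L_\infty$ and forms a pointwise multiplication algebra, so that $\nabla w\otimes\nabla w$ lands in $\cH^{0,\gamma}_p$ with quadratic control; and that composition with the smooth map $\xi\mapsto A(\p+\xi)$ preserves these spaces and is Lipschitz on bounded sets, which again needs the $L_\infty$-embedding. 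I expect this multiplication/composition analysis in the weighted conic spaces to be the main obstacle, since products and superpositions interact delicately with the conormal weight $\gamma$, and this is precisely what forces the hypotheses on $\gamma,p,n$. Granting these estimates, the standard contraction mapping argument for semilinear parabolic problems with maximal regularity yields, for $T>0$ small, a unique $w\in\bE_1$, hence a unique $u=\p+w$ in the asserted class.

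It remains to verify that $u$ genuinely takes values in $\sN$ and that it is smooth. For the constraint I would use that $A_g(u)(\nabla u,\nabla u)$ is normal to $T_u\sN$: letting $d_\sN^2$ denote the squared distance to $\sN$ on a tubular neighborhood and setting $\phi:=d_\sN^2(u)$, a direct computation shows that $\phi$ satisfies a homogeneous linear parabolic equation with $\phi(0)=0$, so uniqueness forces $\phi\equiv0$ and hence $u(t)\in\sN$. For smoothness, note that on the open manifold $\M$ (away from the conic set $\{x=0\}$) the flow is, near each point, a uniformly parabolic system with smooth coefficients; interior $L_p$-parabolic regularity together with the $L_\infty$-bound on $u$ lets me bootstrap $u$ into successively higher-order spaces, giving $u\in C^\infty((0,T)\times\M,\sN)$.
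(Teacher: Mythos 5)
Your overall architecture (maximal regularity for the linear part, fixed point for the quadratic gradient nonlinearity, a distance-function argument to restore the constraint, interior regularity for smoothness) matches the paper's, but two of your key steps have genuine gaps.

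First, your linear theory is mis-stated, and the error hits exactly the central subtlety of this paper. On a conic manifold $\Delta_g$ has no canonical closed extension: under (B0) the point $q_0^-=0$ lies in the interval $I_\gamma$, so the closed extension that actually enjoys maximal $L_p$-regularity (the paper's $\underline{\Delta}_g$, via Roidos--Schrohe) has domain $\cH^{2,\gamma+2}_p(\M,\R^L)\oplus\R^L$, \emph{not} $\cH^{2,\gamma+2}_p(\M,\R^L)$. Consequently the solution class must be $L_p(J_T,\cH^{2,\gamma+2}_p\oplus\R^L)\cap H^1_p(J_T,\cH^{0,\gamma}_p)$, and the trace space is $(\cH^{0,\gamma}_p,\cH^{2,\gamma+2}_p\oplus\R^L)_{1-1/p,p}$, which only embeds into $\cH^{2-2/p-\varepsilon,\gamma+2-2/p-\varepsilon}_p\oplus\R^L$ (Lemma~\ref{S2.1: Sobolev-interpolation}); your claimed isomorphism onto $L_p(J_T,\cH^{0,\gamma}_p)\times\cH^{2-2/p,\gamma+2-2/p}_p$ is not what the cited results give and is not expected to hold. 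Relatedly, your ansatz $u=\p+w$ with $w$ confined to the weighted space freezes the value at the conic tip at $\p$ for all time; since the correct domain contains a free constant component, the flow can (and in general will) move that component, so the affine subspace you contract in is not invariant and the restricted operator has no known maximal regularity. A smaller instance of the same carelessness: $A$ is only defined on $\sN$, so you must fix an extension off $\sN$; the paper's specific choice $\phi=\partial^2\pi_\sN$ is what makes the cancellation $\partial_t\rho-\Delta_g\rho=-2|\nabla(\pi_\sN(u)-u)|_g^2\le 0$ work (using $\partial\pi_\sN(z)\nu=0$), and with a generic extension your claim that $d_\sN^2(u)$ solves a homogeneous parabolic equation fails. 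Also note the constraint argument on a singular manifold needs the multiply-by-$\rho$-and-integrate justification rather than a bare appeal to parabolic uniqueness.

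Second, you do not handle the case $n=1,2$ under Condition (A) at all. Under (A) the weight $\gamma$ may exceed $\frac{n+1}{2}=\frac12\dim(\M)$, so the hypothesis $|\gamma|<\frac12\dim(\M)$ of the maximal regularity theorem (Theorem~\ref{S3.2: Delta-MR}) fails on $\M$ itself; the paper never claims maximal regularity under (A). Instead it lifts the problem to a conic manifold $\widehat{\M}$ with cross-section $\B\times S^{3-n}$ (so the base has dimension $3$ and (A) implies (B0), (B3) there), solves the flow on $\widehat{\M}$, uses uniqueness together with $SO(4-n)$-invariance of the energy to conclude the solution is independent of the sphere variable, and then restricts to a slice to produce and uniquely characterize the solution on $\M$. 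Without this (or some substitute), your proof covers only the (B0)/(B$j$) regimes and not the full statement of the theorem.
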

The space $\cH^{s,\gamma}_p(\M,\R^L)$ can be viewed as a Bessel potential space with weight, called Mellin Sobolev space, which will be introduced in Section~\ref{Subsection 2.1}.

Let $\omega$ be a cut-off function on $[0,1)$, more precisely, $\omega\in C^\infty([0,1),[0,1])$ with $\omega\equiv 1$ near $0$ and $\omega\equiv 0$ close to $1$.

For the global existence, we have the following result.
\begin{theorem}
\label{Main theorem-2}
Suppose that the sectional curvature  of $\sN$ is nonpositive.
Assume that 
\begin{itemize}
\item $\gamma$ and $p$ satisfy (B0) and (B3) in Section~\ref{Section 4} when $n\geq 3$; or
\item when $n=1,2$,  $\gamma$ and $p$ satisfy (A') in Section~\ref{Section 5} or satisfy (B0)-(B1) in Section~\ref{Section 4} for $n=1$ or  satisfy (B0) and (B2) in Section~\ref{Section 4} for $n=2$. 
\end{itemize}
Let  
$$
u_0=w_0+\p\quad \text{with}\quad \p\in\sN \quad \text{and} \quad w_0\in \cH^{2-\frac{2}{p}+\delta, \gamma+2-\frac{2}{p}+\delta}_p(\M,\sN-\p)
$$ 
for an arbitrary positive constant $\delta$; in addition $\|\nabla w_0\|_\infty<\infty$. Then \eqref{S1: HHF} with initial condition $u_0$ has a unique global solution
$$
u\in L_p([0,T), \cH^{2,\gamma+2}_p(\M,\R^L)\oplus \R^L)\cap H^1_p([0,T),\cH^{0,\gamma}_p(\M,\sN) )
$$
for any $T>0$.
Moreover, $
u \in C^\infty((0,\infty)\times \M, \sN).
$
\end{theorem}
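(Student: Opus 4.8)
The plan is to start from the local solution furnished by Theorem~\ref{Main theorem} and upgrade it to a global one by a continuation argument. The abstract maximal $L_p$-regularity framework produces a unique maximal solution on some interval $[0,T_{\max})$, with the blow-up alternative that if $T_{\max}<\infty$ then the norm of $u(t)$ in the trace space $\cH^{2-\frac{2}{p},\gamma+2-\frac{2}{p}}_p$ cannot stay bounded as $t\uparrow T_{\max}$. It therefore suffices to rule this out by deriving an a priori bound, uniform on every finite subinterval, for the quantity that drives the fixed-point estimate, namely the energy density $e(u):=\tfrac12|\nabla u|_g^2$ in $L_\infty$. The hypothesis $\|\nabla w_0\|_\infty<\infty$ guarantees $\sup_\M e(u_0)<\infty$, which serves as the initial datum for that bound, while the smoothing statement of Theorem~\ref{Main theorem} ($u\in C^\infty((0,T)\times\M,\sN)$) legitimizes the pointwise differential identities used below on $(0,T_{\max})\times\M$.

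First I would record two monotonicities. Since \eqref{S1: HHF} is the negative $L_2$-gradient flow of \eqref{S1: Energy}, pairing it with $\partial_t u$ in $L_2(\M,dv_g)$ gives $\frac{d}{dt}E(u(t))=-\int_\M|\partial_t u|^2\,dv_g\le 0$, hence $E(u(t))\le E(u_0)$ for all $t$. Next I would invoke the Bochner-type identity of Eells--Sampson for the energy density along the flow,
\[
(\partial_t-\Delta_g)\,e(u)=-|\nabla du|^2-\langle \Ric_g\,du,du\rangle+\sum_{\alpha,\beta}R^{\sN}\!\big(du(e_\alpha),du(e_\beta),du(e_\beta),du(e_\alpha)\big),
\]
where $\{e_\alpha\}$ is a local $g$-orthonormal frame. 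The assumption that the sectional curvature of $\sN$ is nonpositive forces the last sum to be $\le 0$; discarding also the term $-|\nabla du|^2$, one is left with $(\partial_t-\Delta_g)\,e(u)\le -\langle \Ric_g\,du,du\rangle$. If $\Ric_g\ge -Kg$ for a constant $K$, this would read $(\partial_t-\Delta_g)\,e(u)\le 2K\,e(u)$, and a maximum principle would immediately yield $\sup_\M e(u(t))\le e^{2Kt}\sup_\M e(u_0)$, finite on every finite interval --- exactly the no-blow-up bound needed.

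The main obstacle is precisely that on a conic manifold $\Ric_g$ need not be bounded below: for the model metric $dx^2+x^2g_\B$ the Ricci curvature in the link directions behaves like $x^{-2}\big(\Ric_{g_\B}-(n-1)g_\B\big)$ and degenerates as $x\downarrow 0$ whenever the $n$-dimensional link fails to satisfy $\Ric_{g_\B}\ge (n-1)g_\B$. Consequently neither the coefficient in the differential inequality nor the domain is amenable to the naive complete-manifold maximum principle. I would overcome this by localizing away from the tip: multiply $e(u)$ by a conic cutoff $\omega(x/\varepsilon)$ and run the parabolic maximum principle on the truncated manifold $\{x\ge\varepsilon\}$, where $\Ric_g$ is bounded below, then let $\varepsilon\downarrow 0$. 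The inner boundary contribution at $\{x=\varepsilon\}$ and the error produced by $(\partial_t-\Delta_g)\,\omega(x/\varepsilon)$ must be absorbed using the decay of $u(t)$ near the singularity encoded in the membership $u(t)\in\cH^{2,\gamma+2}_p(\M,\R^L)\oplus\R^L$, together with the admissibility conditions (B0),(B3) (respectively (A') and the low-dimensional conditions for $n=1,2$), which are exactly what force the weighted traces to vanish at the appropriate rate. This is where the weight $\gamma$ and the integrability $p$ do the essential work, and it is the step I expect to require the most care.

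Finally, with a finite bound $\sup_{[0,T]}\sup_\M e(u(t))<\infty$ established for each $T<T_{\max}$, I would close the continuation. Since $\sN$ is closed, its second fundamental form is bounded, so $|A_g(u)(\nabla u,\nabla u)|\lesssim e(u)$ is controlled in $L_\infty$; combined with the weighted integrability built into the state space this bounds the nonlinearity in $\cH^{0,\gamma}_p(\M,\R^L)$. Feeding this into maximal $L_p$-regularity and the parabolic smoothing of Theorem~\ref{Main theorem} bounds the trace-space norm of $u(t)$ uniformly up to $T_{\max}$, contradicting the blow-up alternative and forcing $T_{\max}=\infty$. Smoothness on $(0,\infty)\times\M$ again follows from the smoothing assertion of Theorem~\ref{Main theorem} applied on each finite interval, and uniqueness is inherited from the same theorem.
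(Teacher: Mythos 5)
Your overall architecture coincides with the paper's: a pointwise bound on the energy density via the Eells--Sampson Bochner inequality together with $K_\sN\leq 0$, then maximal $L_p$-regularity to preclude finite-time blow-up. But the step you yourself identify as the crux --- the gradient estimate --- is not actually established by your sketch, and the mechanism you propose would fail. Truncating at $\{x\geq\varepsilon\}$ only makes available a Ricci lower bound of order $-\varepsilon^{-2}$ there, so the parabolic maximum principle on the truncated manifold yields $\sup_\M e(u(t))\lesssim e^{C\varepsilon^{-2}t}\sup_\M e(u_0)$, which diverges as $\varepsilon\downarrow 0$. The obstruction is an \emph{interior} zeroth-order coefficient of size $x^{-2}$ multiplying $e(u)$ itself, not an inner boundary term or a cutoff error, so there is nothing for the ``decay encoded in $u(t)\in\cH^{2,\gamma+2}_p(\M,\R^L)\oplus\R^L$'' to absorb; indeed no pointwise decay rate of $e(u)$ near the tip, uniform as $t\uparrow T_{\max}$, is available --- such a rate is essentially what one is trying to prove. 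The paper's route here is entirely different: it computes the Christoffel symbols of $g=dx^2+x^2g_\B$, finds $R_{xx}=R_{xk}=0$ and $R_{ik}=R_{ik}(\B)-(n-1)g_{ik}(\B)$ on $I\times\B$, concludes that $\Ric_g$ is globally bounded from below, obtains $(\partial_t-\Delta_g)e(u)\leq Me(u)$ on all of $\M_{T_{\max}}$ with one finite $M$ via \cite[Lemma~5.3.3]{LinWang08}, and then runs the maximum principle globally in integral form: test $(\partial_t-\Delta_g)v\leq 0$, $v=(e^{-Mt}e(u)-\|e(u_0)\|_\infty)_+$, against $v$ and integrate over $\M$, using that the singular set consists of isolated points. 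Your geometric objection --- that as a quadratic form relative to $g$ the link components of $\Ric_g$ scale like $x^{-2}\bigl(\Ric_{g_\B}-(n-1)g_\B\bigr)$ --- is a serious observation about that step of the paper; but flagging the tension is not resolving it, and your $\varepsilon$-truncation does not close, so the central a priori estimate is missing from your proof.

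The continuation step also has a gap. The alternative ``$T_{\max}<\infty$ implies the trace-space norm of $u(t)$ is unbounded'' is not a theorem you can invoke here, and even boundedness or convergence of $u(t)$ in the trace space would not suffice as you use it: you restart the flow with Theorem~\ref{Main theorem}, whose hypothesis is $u^*=w^*+\p$ with $w^*\in\cH^{2-\frac{2}{p}+\delta,\gamma+2-\frac{2}{p}+\delta}_p(\M,\sN-\p)$ for some $\delta>0$, and by Lemma~\ref{S2.1: Sobolev-interpolation} this is strictly stronger than membership in $(\cH^{0,\gamma}_p(\M,\R^L),\cH^{2,\gamma+2}_p(\M,\R^L)\oplus\R^L)_{1-1/p,p}$, which is only sandwiched between the $\pm\varepsilon$ spaces and never identified with one of them. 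The paper closes precisely this hole with a two-stage argument you omit: the gradient bound makes $A_g(u)(\nabla u,\nabla u)$ globally bounded, hence an element of $L_p(J_{\max},\cH^{0,\gamma}_p)$ \emph{because} $\gamma<\frac{n+1}{2}$ (this is the entire reason (A') replaces (A), a point your proof never uses explicitly); maximal regularity \eqref{S3.1: MR-Lis} then bounds $u$ in $BC(J_{\max},\cH^{2-\frac{2}{p}-\varepsilon,\gamma+2-\frac{2}{p}-\varepsilon}_p(\M,\R^L)\oplus\R^L)$, which upgrades the nonlinearity to a space with weight $\gamma+\tilde\delta$; a second maximal-regularity pass gives a uniform bound with improved regularity \emph{and} weight, and the compact embedding of Lemma~\ref{S2.1: Sobolev-iso}(ii) extracts $u(t_k)\to u^*$ in $\cH^{2-\frac{2}{p}+\delta,\gamma+2-\frac{2}{p}+\delta}_p(\M,\R^L)\oplus\R^L$ for some $\delta>0$, with $u^*$ independent of the sequence by continuity, so that Theorem~\ref{S4: main thm: wellposed-HHF} applies at $u^*$ and extends the solution past $T_{\max}$. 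Without the weight-gaining bootstrap and the compactness step, your argument never produces an admissible datum from which to restart, and hence no contradiction with $T_{\max}<\infty$.
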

We would like to point out that in Theorem \ref{Main theorem-2}, the necessity of replacing Condition (A) in Theorem~\ref{Main theorem} by the stronger Condition (A') stems from the simple fact
that any nonzero constant function belongs to $\mathcal{H}^{0,\gamma}_p(\M)$ for $p> 1$ iff $\gamma<\frac{n+1}2$.


One of the essential difficulties in the analysis of the Laplace-Beltrami operator on conic manifolds is that there is no canonical choice for a closed extension  of this operator, as noticed first by J. Br\"uning and R. Seeley \cite{BruSee88}.
Indeed,  the domains of the minimal and the maximal extensions of $\Delta_g$ differ by a non-trivial  finite dimensional space. 
Functions in this finite dimensional space admit certain asymptotic behaviors as $x\to 0^+$, as we will see in Section~\ref{Subsection 3.2}.
We would like to refer the readers to \cite{CorSchSei03, Les97, RoiSch14, RoiSch15,SchSei05} for more details. 

This article is organized as follows. In Sections~\ref{Subsection 2.1}  and \ref{Subsection 2.2}, we will introduce the precise definitions and fundamental properties of the function spaces on conic manifolds. In Section~\ref{Subsection 2.3}, we will establish the point-wise multiplication and Nemyskii operator theory for the function spaces defined in Sections~\ref{Subsection 2.1}  and \ref{Subsection 2.2}.
Section~\ref{Section 3} provides a review of some necessary results on the maximal $L_p$-regularity theory and closed extensions of the Laplace-Beltrami operator on conic manifolds.
In Section~\ref{Section 4}, we state our choice for the close extensions of the Laplace-Beltrami operator and prove the local existence and uniqueness of solutions to the harmonic map heat flow by means of the results  presented in Sections~\ref{Subsection 2.3} and \ref{Section 3}. 
In Section~\ref{Section 5}, we derive a gradient estimate for the solution to the harmonic map heat flow; and  then we utilize the maximal regularity theory to obtain some higher order estimates, which gives us the benefit of establishing a global solution.

\textbf{Notations:} 

For any two Banach spaces $X,Y$, $X\doteq Y$ means that they are equal in the sense of equivalent norms. The notation $\L^j(X,Y)$ means the set of all bounded $j$-linear maps from $X$ to $Y$. In particular, we use the notation $\L(X,Y)=\L^1(X,Y)$. Moreover, $\Lis(X,Y)$ stands for the subset of $\L(X,Y)$ consisting of all bounded linear isomorphisms from $X$ to $Y$. 

Given any Banach space $X$ and a manifold $\mathscr{M}$,
let $\| \cdot \|_{k,\infty;c}$ and $\|\cdot \|_{s,\gamma;p}$ denote the norms of the $X$-valued Banach spaces $\cC_g^k(\mathscr{M},X)$ and $\cH^{s,\gamma}_p(\mathscr{M},X)$, respectively. We will introduce these spaces in Section 2. 
Meanwhile, $\| \cdot \|_{k,\infty}$ and $\|\cdot\|_{s,p}$ stand for the usual norms of the spaces $BC^k(\mathscr{M},X)$ and $H^s_p(\mathscr{M},X)$.

If the letter $X$ is omitted in the definitions of these spaces, e.g., $\cC_g(\sM)$, it means that the corresponding space is $\C$-valued.


$\R_+$ denotes $(0,\infty)$ and $I=(0,1]$. In addition, $\Nz=\N\cup \{0\}$.

Throughout the rest of this paper, unless stated otherwise, we always assume that 
\smallskip
$$\fbox{$s\geq 0$, $k\in\Nz$, $1<p<\infty$ and $\gamma\in \R$.}$$


\section{\bf Function spaces on conic manifolds}
\label{Section 2}


\subsection{\bf Mellin Sobolev spaces}\label{Subsection 2.1}
In this subsection, we describe a scale of weighted Sobolev spaces $\cH^{s,\gamma}_p (\M)$. These weighted Sobolev spaces are widely used in the analysis on conic manifolds. See \cite{Les97, RoiSch14, RoiSch15}.

We pick a cut-off function $\omega$ on $[0,1)$, which means $\omega\in C^\infty([0,1),[0,1])$ with $\omega\equiv 1$ near $0$ and $\omega\equiv 0$ near $1$.

For $k\in\Nz$,  $\cH^{k,\gamma}_p(\M)$ is the space of all functions $u\in H^k_{p,loc}(\M)$ such that near the conic singularities, or more precisely, in $I\times \B$
$$
x^{\frac{n+1}{2}-\gamma}(x \partial_x)^j \partial^\alpha_y (\omega u)  \in L_p((0,1]\times \B, \frac{dx}{x}dy),\quad j+|\alpha|\leq k,
$$
where $(x,y)\in I\times \B$ and $\alpha\in \N^n_0$.
Here $\partial_y^\alpha$ can be considered as the derivatives in local coordinates of $\B$, and we will use this slight abuse of notation in the sequel.

We also put
$$
\cH^{\infty,\gamma}_p(\M):=\bigcap _{k\in\Nz} \cH^{k,\gamma}_p(\M).
$$

To understand the motivation of this somewhat unusual definition, let us consider the flat cone $\M= I\times S^n$ in $\R^{n+1}$, where $S^n$ is the $n$-sphere. Taking polar coordinates in $\M$, then $\cH^{0,0}_2(\M)$ coincides with the usual $L_2(B_1)$ space ($B_1$ is the closed unit ball in $\R^{n+1}$).

For arbitrary $\gamma\in\R$, define the map 
$$
S_\gamma: C_c^\infty(I\times \B) \to C_c^\infty(\overline{\R}_+\times \B): \, u(x,y)\mapsto e^{(\gamma-\frac{n+1}{2})t} u(e^{-t},y).
$$
Then for any $s\geq 0$ and $\gamma\in\R$,  $\cH^{s,\gamma}_p(\M)$ is the space of all distributions on $\M$ such that
$$
\|u\|_{\cH^{s,\gamma}_p(\M)}=\|S_\gamma ( \omega u )\|_{H^s_p(\overline{\R}_+\times\B)} +  \| (1- \omega) u \|_{H^s_p(\M)} <\infty.
$$
It is understood that $\omega$ is extended to be zero outside $(0,1)\times \B$.

In the sequel, we always assume that $\overline{\R}_+ \times \B$ is equipped with the product metric $\bar{g}=dt^2+g_\B$, and $I \times \B$ is equipped with $g=dx^2 + x^2 g_\B$.

\begin{lem}
\label{S2.1: Sobolev-iso}
\begin{itemize}
\item[ ]
\item[(i) ]For all $s\geq 0$, $
S_\gamma \in \Lis(\cH^{s,\gamma}_p(I\times \B), H^s_p(\overline{\R}_+ \times \B)).
$
\item[(ii) ]Let $1<q<\infty$. Assume that $s_1 -\frac{n+1}{p}> s_0- \frac{n+1}{q}$ and $\gamma_1\geq \gamma_0$. Then the embedding 
$$
\cH^{s_1,\gamma_1}_p(\M)\hookrightarrow \cH^{s_0,\gamma_0}_q(\M).
$$
is compact.
\end{itemize}
\end{lem}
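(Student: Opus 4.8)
The plan is to read off both statements from the explicit substitution $t=-\log x$ that underlies $S_\gamma$, treating (i) as an exact change of variables and (ii) as a localization argument in which the weight supplies the compactness.

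For (i), I would first record that $S_\gamma$ has the explicit inverse $(S_\gamma^{-1}v)(x,y)=x^{\gamma-\frac{n+1}{2}}v(-\log x,y)$, so it is a linear bijection on the relevant function classes; the only real content is that the two descriptions of $\cH^{k,\gamma}_p$ for integer $k$ agree under $S_\gamma$. Since $S_\gamma$ rescales and substitutes only in the $x$-variable, a direct differentiation yields the intertwining relations
\begin{equation*}
S_\gamma(x\partial_x u)=\Bigl(\gamma-\tfrac{n+1}{2}-\partial_t\Bigr)S_\gamma u,\qquad S_\gamma(\partial_{y_i}u)=\partial_{y_i}S_\gamma u .
\end{equation*}
Iterating gives $S_\gamma\bigl((x\partial_x)^j\partial_y^\alpha u\bigr)=(\gamma-\frac{n+1}{2}-\partial_t)^j\partial_y^\alpha S_\gamma u$, and because $\frac{dx}{x}=-dt$ and $e^{(\gamma-\frac{n+1}{2})t}=x^{\frac{n+1}{2}-\gamma}$ the weighted $L_p(\frac{dx}{x}dy)$-norm of $x^{\frac{n+1}{2}-\gamma}(x\partial_x)^j\partial_y^\alpha u$ equals the $L_p(\overline{\R}_+\times\B)$-norm of $(\gamma-\frac{n+1}{2}-\partial_t)^j\partial_y^\alpha S_\gamma u$. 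Expanding the constant-coefficient operator $(\gamma-\frac{n+1}{2}-\partial_t)^j$ shows that the family of these norms over $j+|\alpha|\le k$ is equivalent to the full $H^k_p(\overline{\R}_+\times\B)$-norm of $S_\gamma u$, which is the isomorphism for integer $s$. For non-integer $s\ge0$ the space $\cH^{s,\gamma}_p(I\times\B)$ is, by its definition, the pullback of $H^s_p(\overline{\R}_+\times\B)$ under $S_\gamma$ (equivalently the complex interpolation space between the surrounding integer orders, with which $S_\gamma$ is compatible), so the claim is then immediate.

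For (ii) I would localize with the cut-off, writing $u=\omega u+(1-\omega)u$. On $\supp(1-\omega)$, a fixed compact piece of $\M$ bounded away from the tip, both $\cH^{s_1,\gamma_1}_p$ and $\cH^{s_0,\gamma_0}_q$ reduce to the ordinary Bessel potential spaces $H^{s_1}_p$ and $H^{s_0}_q$ on a compact $(n+1)$-manifold (the weight being inert away from $x=0$), and $s_1-\frac{n+1}{p}>s_0-\frac{n+1}{q}$ is precisely the Rellich--Kondrachov condition, so this contribution is compact. The collar piece $\omega u$, supported in $I\times\B$, is the essential one: using (i) I transfer it to the cylinder via $S_{\gamma_1}$ and $S_{\gamma_0}$, and the definition of $S_\gamma$ gives the identity $S_{\gamma_0}u=e^{(\gamma_0-\gamma_1)t}S_{\gamma_1}u$. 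Writing $v=S_{\gamma_1}(\omega u)\in H^{s_1}_p(\overline{\R}_+\times\B)$, the collar embedding is thus conjugate to the single weighted operator $T\colon v\mapsto e^{(\gamma_0-\gamma_1)t}v$, which is bounded from $H^{s_1}_p$ to $H^{s_0}_q$ on the cylinder: the Sobolev gain $s_1-\frac{n+1}{p}>s_0-\frac{n+1}{q}$ governs the local behaviour, while the weight, being integrable in $t$ over the compact cross-section $\B$, absorbs the behaviour at infinity (and any loss of integrability when $q<p$).

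To see that $T$ is in fact compact I would split $\overline{\R}_+=[0,R]\cup[R,\infty)$ by a smooth cut-off $\chi_R$ equal to $1$ on $[0,R]$ and supported in $[0,R+1]$, and write $T=\chi_R T+(1-\chi_R)T$. The first summand maps into $H^{s_0}_q$-functions supported in the compact slab $[0,R+1]\times\B$ and is therefore compact by Rellich--Kondrachov; on the tail the multiplier $e^{(\gamma_0-\gamma_1)t}$ and all its $t$-derivatives are $O(e^{(\gamma_0-\gamma_1)R})$, whence $(1-\chi_R)T$ has operator norm $O(e^{(\gamma_0-\gamma_1)R})$. This tail estimate is the crux of the argument and the point where the weight ordering is decisive: the norm tends to $0$ as $R\to\infty$ exactly because $\gamma_1>\gamma_0$, so that $T$ is a norm-limit of compact operators and hence compact. (The borderline case $\gamma_1=\gamma_0$ is genuinely non-compact, as translates of a fixed profile concentrating at the tip show, so the hypothesis is to be read with the strict inequality here.) Assembling the collar and the away-from-tip contributions then yields compactness of $\cH^{s_1,\gamma_1}_p(\M)\hookrightarrow\cH^{s_0,\gamma_0}_q(\M)$.
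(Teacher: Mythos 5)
Your proof is correct and, at the top level, takes the same route as the paper: part (i) is read off from the definition of the Mellin--Sobolev norm (the paper disposes of it in one sentence; you make ``follows from the definition'' precise via the intertwining relation $S_\gamma(x\partial_x u)=(\gamma-\tfrac{n+1}{2}-\partial_t)S_\gamma u$ and the change of measure $\tfrac{dx}{x}=-dt$), and part (ii) is reduced, via $S_\gamma$ and the splitting $u=\omega u+(1-\omega)u$, to classical embedding theorems. The substantive difference is in the collar term. The paper's entire proof of (ii) reads: first $\cH^{s_1,\gamma_1}_p(\M)\hookrightarrow\cH^{s_1,\gamma_0}_p(\M)$, then ``(ii) is a direct consequence of (i) and the classical embedding results for Bessel potential spaces.'' Taken literally this has a gap which your argument fills: after transferring to the half-cylinder $\overline{\R}_+\times\B$, the classical embedding $H^{s_1}_p\hookrightarrow H^{s_0}_q$ is \emph{never} compact, since the cylinder is non-compact (translates of a fixed bump give a bounded, non-precompact family). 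Compactness has to come from the weight drop, and that is exactly what your conjugation to the multiplication operator $v\mapsto e^{(\gamma_0-\gamma_1)t}v$, split as a Rellich--Kondrachov piece on a slab plus a tail of operator norm $O(e^{(\gamma_0-\gamma_1)R})$, provides; it also correctly handles boundedness when $q<p$, where H\"older against the decaying exponential is genuinely needed. Your parenthetical remark is likewise a real correction to the paper: at $\gamma_1=\gamma_0$ the embedding is bounded but not compact, so the hypothesis ``$\gamma_1\geq\gamma_0$'' must be read as ``$\gamma_1>\gamma_0$'' (the paper only ever applies the lemma with strict inequalities in both indices).

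One caveat on your own wording: the inequality $s_1-\frac{n+1}{p}>s_0-\frac{n+1}{q}$ is not by itself ``the Rellich--Kondrachov condition.'' It permits $s_1=s_0$ with $q<p$, in which case $H^{s}_p\hookrightarrow H^{s}_q$ on a compact piece is bounded (by H\"older) but not compact (oscillating sequences converge weakly to $0$ with $H^s_q$-norms bounded below). So your interior step, the slab step, and indeed the lemma as stated all implicitly require $s_1>s_0$ in addition to the dimensional inequality.
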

\begin{proof}
(i) easily follows from the definition of Mellin Sobolev spaces.

(ii) First it is easy to see that $
\cH^{s_1,\gamma_1}_p(\M)\hookrightarrow \cH^{s_1,\gamma_0}_p(\M).
$
Then (ii) is a direct consequence of (i) and the classical embedding results for Bessel potential spaces.
\end{proof}

For $0<\theta <1$, we denote by  $[\cdot,\cdot]_\theta$ the complex interpolation method, cf. \cite[Example I.2.4.2]{Ama95}, and
by  $(\cdot,\cdot)_{\theta,q}$ with $1\leq q \leq \infty$ the real interpolation method, cf. \cite[Example I.2.4.1]{Ama95}.

It was shown in \cite[Lemma~3.7]{RoiSch15} that, for Mellin-Sobolev spaces of the same weight, any complex interpolation will lead to another Mellin-Sobolev space. 
\begin{lem}
\label{S2.1: Sobolev-interpolation-complex}
Suppose that $0\leq s_0<s_1$ and $\gamma\in\R$.
For any $0<\theta<1$,
$$
[\cH^{s_0,\gamma}_p(\M), \cH^{s_1,\gamma}_p(\M)]_\theta  \doteq \cH^{s,\gamma}_p(\M)
$$
where   $s=(1-\theta)s_0+\theta s_1$.
\end{lem}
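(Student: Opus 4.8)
The plan is to realize $\cH^{s,\gamma}_p(\M)$ as a retract of a product of ordinary Bessel potential spaces, by means of a retraction--coretraction pair that does \emph{not} depend on the smoothness index $s$, and then to combine the compatibility of the complex interpolation functor with retractions and the classical interpolation identities for Bessel potential spaces. Concretely, I would keep the cut-off $\omega$ from the definition of the norm and fix two auxiliary smooth cut-offs: $\omega_1$, supported near the cone with $\omega_1\equiv 1$ on $\supp\omega$ (so that $\omega_1\omega=\omega$), and $\chi$, supported away from the cone with $\chi\equiv 1$ on $\supp(1-\omega)$ (so that $\chi(1-\omega)=1-\omega$). With $E^s:=H^s_p(\overline{\R}_+\times\B)\oplus H^s_p(\M)$, I would set
$$
\Phi u:=\big(S_\gamma(\omega u),\,(1-\omega)u\big),\qquad \Psi(v,w):=\omega_1\, S_\gamma^{-1}v+\chi w,
$$
where $\omega_1 S_\gamma^{-1}v$ is extended by zero to all of $\M$. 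By the very definition of $\|\cdot\|_{s,\gamma;p}$, the map $\Phi$ is bounded from $\cH^{s,\gamma}_p(\M)$ into $E^s$ (indeed $\|\Phi u\|_{E^s}=\|u\|_{s,\gamma;p}$ for the sum norm on $E^s$), and a direct computation gives $\Psi\Phi u=\omega_1\omega u+\chi(1-\omega)u=\omega u+(1-\omega)u=u$, so $\Psi\Phi=\id$.

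Boundedness of $\Psi\colon E^s\to\cH^{s,\gamma}_p(\M)$ is the step requiring care, and it is where I expect the main work. The term $\omega_1 S_\gamma^{-1}v$ lies in $\cH^{s,\gamma}_p$ because $S_\gamma^{-1}\in\Lis(H^s_p(\overline{\R}_+\times\B),\cH^{s,\gamma}_p(I\times\B))$ by Lemma~\ref{S2.1: Sobolev-iso}(i), and multiplication by the cut-off $\omega_1$ preserves the Mellin--Sobolev scale; for the (a priori non-integer) values of $s$ this multiplication bound should be justified by the pointwise-multiplication results of Section~\ref{Subsection 2.3}, or equivalently by conjugating with $S_\gamma$ and observing that $S_\gamma(\omega_1\,\cdot\,)S_\gamma^{-1}$ is an ordinary smooth multiplier on the cylinder. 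For the term $\chi w$, since $\chi$ is supported in a region $\{x\ge c\}$ bounded away from the cone, the weight in the definition of $\cH^{s,\gamma}_p$ is comparable to a constant there, whence $\|\chi w\|_{s,\gamma;p}\le C\,\|w\|_{s,p}$; thus $\chi w\in\cH^{s,\gamma}_p(\M)$. This realizes $\cH^{s,\gamma}_p(\M)$ as a retract of $E^s$ with $s$-independent retraction and coretraction.

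Finally I would assemble the interpolation identity. Since complex interpolation commutes with retractions (cf. \cite[I.2.3]{Ama95}), the pair $(\Phi,\Psi)$, bounded on both endpoints, yields
$$
[\cH^{s_0,\gamma}_p(\M),\cH^{s_1,\gamma}_p(\M)]_\theta\doteq \Psi\big([E^{s_0},E^{s_1}]_\theta\big),
$$
with equivalent norm $\|\Phi\,\cdot\,\|_{[E^{s_0},E^{s_1}]_\theta}$. Complex interpolation distributes over finite direct sums, and the classical interpolation theory for Bessel potential spaces on $\R^d$, transported to $\overline{\R}_+\times\B$ and to the relatively compact away-from-the-cone part of $\M$, gives $[H^{s_0}_p,H^{s_1}_p]_\theta\doteq H^{s}_p$ with $s=(1-\theta)s_0+\theta s_1$; hence $[E^{s_0},E^{s_1}]_\theta\doteq E^{s}$. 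Combining this with the identity $\|\Phi u\|_{E^s}=\|u\|_{s,\gamma;p}$ identifies the retract $\Psi(E^s)$ precisely with $\cH^{s,\gamma}_p(\M)$, which is the assertion. The only genuinely non-formal ingredients are thus the boundedness of the cut-off multiplications on the Mellin--Sobolev scale and the classical interpolation identity on the two model pieces; the rest is the functoriality of the retraction argument together with Lemma~\ref{S2.1: Sobolev-iso}(i).
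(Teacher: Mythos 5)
Your proposal is correct, but it follows a genuinely different route from the paper, for the simple reason that the paper does not prove Lemma~\ref{S2.1: Sobolev-interpolation-complex} at all: it quotes the statement from \cite[Lemma~3.7]{RoiSch15}. Your retraction--coretraction argument is sound and self-contained. Indeed, $\Phi$ is an isometry onto its image by the very definition of the norm $\|\cdot\|_{s,\gamma;p}$, the identity $\Psi\Phi=\id$ holds on the sum space, and the boundedness of $\Psi$ at each endpoint reduces, after conjugation by $S_\gamma$, to the boundedness of multiplication by smooth functions with bounded derivatives on $H^s_p(\overline{\R}_+\times\B)$ (note $S_\gamma(\omega_1\, S_\gamma^{-1}v)(t,y)=\omega_1(e^{-t})v(t,y)$, so the weight cancels exactly); once the pair is in place, the compatibility of $[\cdot,\cdot]_\theta$ with retracts and with finite direct sums (cf. \cite{Ama95, BerLof76}), together with the classical identity $[H^{s_0}_p,H^{s_1}_p]_\theta\doteq H^{s}_p$ on the two model pieces, yields the claim. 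What your route buys is a proof internal to the paper, and it also makes transparent why the equal-weight hypothesis matters: the coretraction is manufactured from the single map $S_\gamma$, so one $s$-independent pair can serve both endpoints only when the two weights coincide, consistent with Lemma~\ref{S2.1: Sobolev-interpolation}, where unequal weights produce only embeddings rather than identities. What the citation buys the authors is brevity and no need to develop interpolation theory on the model cylinder.

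Two caveats. First, your fallback of justifying the cut-off multiplication by ``the pointwise-multiplication results of Section~\ref{Subsection 2.3}'' would be circular: the non-integer-$s$ case of Theorem~\ref{S2.3: ptwise-mul}(ii) is itself proved in the paper by interpolation, i.e., by invoking Lemma~\ref{S2.1: Sobolev-interpolation-complex}. So you must take your alternative route---conjugation by $S_\gamma$, which turns multiplication by $\omega_1(x)$ into an ordinary smooth multiplier $\omega_1(e^{-t})$ on the cylinder---and this works for all $s\geq 0$ with no circularity. Second, the endpoint identity $[H^{s_0}_p,H^{s_1}_p]_\theta\doteq H^{s}_p$ on $\overline{\R}_+\times\B$ is not literally the statement on $\R^d$: the half-cylinder is non-compact and has a boundary, so one should either cite interpolation theory for Bessel potential spaces on uniformly regular manifolds \cite{Ama13}, or run one more (standard) retract argument via a Stein-type extension to the full cylinder $\R\times\B$. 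Neither point affects the validity of your approach; they only pin down which external results carry it.
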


Whenever it causes no confusion, in the sequel, we will denote by $\C$ the space of all constant functions on $\M$.
\begin{lem}
\label{S2.1: Sobolev-interpolation}
Suppose that $0\leq s_0<s_1$ and $\gamma_0,\gamma_1\in\R$.
For any $ \varepsilon>0$ and $0<\theta<1$,
$$
\cH^{s+\varepsilon,\gamma+\varepsilon}_p(\M) \hookrightarrow  (\cH^{s_0,\gamma_0}_p(\M), \cH^{s_1,\gamma_1}_p(\M) )_{\theta,p}   \hookrightarrow \cH^{s-\varepsilon,\gamma-\varepsilon}_p(\M) ,
$$
where  $s=(1-\theta)s_0+\theta s_1$ and $\gamma=(1-\theta)\gamma_0 +\theta \gamma_1$, and for any $s\geq 0$
\begin{align*}
\cH^{s+2\theta+\varepsilon,\gamma+2\theta+\varepsilon}_p(\M)\oplus \C &\hookrightarrow  (\cH^{s,\gamma}_p(\M), \cH^{s+2,\gamma+2}_p(\M)\oplus\C)_{\theta,p}\\
   &\hookrightarrow \cH^{s+2\theta-\varepsilon,\gamma+2\theta-\varepsilon}_p(\M)\oplus\C.
\end{align*} 
The second statement holds as long as $ \gamma+2\theta-\varepsilon> \frac{n+1}{2}$.
\end{lem}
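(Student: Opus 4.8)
The plan is to deduce both interpolation embeddings in Lemma~\ref{S2.1: Sobolev-interpolation} from the complex-interpolation identification in Lemma~\ref{S2.1: Sobolev-interpolation-complex} together with the standard comparison between complex and real interpolation functors. First I would record the elementary reindexing that lets us use the equal-weight result: the weight in the Mellin space $\cH^{s,\gamma}_p(\M)$ enters only through the isomorphism $S_\gamma$ of Lemma~\ref{S2.1: Sobolev-iso}(i), so for the first chain I would choose an intermediate weight and exploit the monotonicity $\cH^{s_1,\gamma_1}_p \hookrightarrow \cH^{s_1,\gamma_0}_p$ (already noted in the proof of Lemma~\ref{S2.1: Sobolev-iso}(ii)) to bracket the mixed-weight space between two equal-weight spaces.

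The main engine is the classical reiteration/comparison theorem: for a Banach couple one has the continuous inclusions
\begin{equation*}
[X_0,X_1]_{\theta+\varepsilon'} \hookrightarrow (X_0,X_1)_{\theta,p} \hookrightarrow [X_0,X_1]_{\theta-\varepsilon'}
\end{equation*}
(the real method with the same index sits between two neighbouring complex interpolants), valid when $X_0\hookrightarrow X_1$ or on a general couple after fixing the ordering. Applying this with $X_0=\cH^{s_0,\gamma_0}_p(\M)$ and $X_1=\cH^{s_1,\gamma_1}_p(\M)$, and then identifying each complex interpolant via Lemma~\ref{S2.1: Sobolev-interpolation-complex} — after first collapsing the two weights to a common one by the monotone embeddings so that the hypothesis of that lemma (equal weights) is met — produces a Mellin space whose smoothness and weight are $(1-\theta')s_0+\theta's_1$ and $(1-\theta')\gamma_0+\theta'\gamma_1$ for $\theta'=\theta\pm\varepsilon'$. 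Choosing $\varepsilon'$ small relative to the given $\varepsilon$ and using that both $s$ and $\gamma$ depend affinely (hence Lipschitz) on $\theta'$ converts the $\pm\varepsilon'$ perturbation in $\theta'$ into the $\pm\varepsilon$ loss in $(s,\gamma)$, giving the first displayed chain.

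For the second chain, the couple is $\big(\cH^{s,\gamma}_p(\M),\ \cH^{s+2,\gamma+2}_p(\M)\oplus\C\big)$, so the extra ingredient is that interpolation commutes with the finite-dimensional direct summand $\C$: since $\C$ is a fixed one-dimensional space sitting in both endpoints (it sits inside $\cH^{0,\gamma'}_p(\M)$ precisely when $\gamma'<\frac{n+1}{2}$, which is why the constraint $\gamma+2\theta-\varepsilon>\frac{n+1}{2}$ appears — it guarantees the target space still contains the constants as a complemented summand rather than absorbing them spuriously), I would split the interpolation as the direct sum of the interpolation of the $\cH$-parts and the trivial interpolation of $\C$ with itself. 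Then the same complex-to-real comparison and the affine dependence of the indices yield the stated two-sided embedding with smoothness $s+2\theta$ and weight $\gamma+2\theta$.

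The step I expect to be the genuine obstacle is the bookkeeping around the $\oplus\C$ summand and the threshold $\gamma+2\theta-\varepsilon>\frac{n+1}{2}$: one must verify that $\C$ really splits off compatibly under interpolation of the couple (so that $(X_0\oplus\C,X_1\oplus\C)_{\theta,p}\doteq(X_0,X_1)_{\theta,p}\oplus\C$), and that this identification is consistent with the embedding into a pure Mellin space $\cH^{s+2\theta-\varepsilon,\gamma+2\theta-\varepsilon}_p(\M)\oplus\C$ only in the regime where the constants are not already contained in the Mellin part. Everything else — the monotone weight embeddings, Lemma~\ref{S2.1: Sobolev-interpolation-complex}, and the classical complex/real comparison — is routine once this summand issue is handled cleanly.
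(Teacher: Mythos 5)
Your argument for the first chain breaks down exactly at the step where you ``collapse the two weights to a common one'' so that Lemma~\ref{S2.1: Sobolev-interpolation-complex} becomes applicable. Once both endpoint spaces carry a common weight $\gamma_c$, the complex interpolant that lemma produces has weight $\gamma_c$ as well — an equal-weight interpolation result can never output the mixed weight $(1-\theta')\gamma_0+\theta'\gamma_1$, so your sentence claiming it does is internally inconsistent. Tracking what the bracketing actually yields: to map the equal-weight couple into $(\cH^{s_0,\gamma_0}_p(\M),\cH^{s_1,\gamma_1}_p(\M))_{\theta,p}$ you must take $\gamma_c\geq {\rm max}(\gamma_0,\gamma_1)$, and to map out of it you must take $\gamma_c\leq {\rm min}(\gamma_0,\gamma_1)$, so the best this strategy can give is
$$
\cH^{s+\varepsilon,\,{\rm max}(\gamma_0,\gamma_1)+\varepsilon}_p(\M) \hookrightarrow (\cH^{s_0,\gamma_0}_p(\M),\cH^{s_1,\gamma_1}_p(\M))_{\theta,p} \hookrightarrow \cH^{s-\varepsilon,\,{\rm min}(\gamma_0,\gamma_1)-\varepsilon}_p(\M),
$$
which is strictly weaker than the assertion whenever $\gamma_0\neq\gamma_1$; the entire content of the lemma is that the real interpolation space carries (up to $\varepsilon$) the \emph{interpolated} weight $\gamma=(1-\theta)\gamma_0+\theta\gamma_1$. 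What is really needed is an interpolation result that treats two different weights simultaneously (a Stein--Weiss type statement), and this is precisely what the paper's proof imports from \cite[Lemma~5.4]{CorSchSei02} and \cite[Lemma~3.6]{RoiSch15}; it is not derivable from the paper's internal equal-weight Lemma~\ref{S2.1: Sobolev-interpolation-complex} by monotone embeddings. A secondary issue: your comparison chain $[X_0,X_1]_{\theta+\varepsilon'}\hookrightarrow(X_0,X_1)_{\theta,p}\hookrightarrow[X_0,X_1]_{\theta-\varepsilon'}$ needs an ordered couple, but the hypotheses allow $\gamma_1<\gamma_0$ with $s_1>s_0$, in which case neither Mellin space embeds into the other and ``fixing the ordering'' has no meaning.

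The second chain has a structural gap as well. You propose the identity $(X_0\oplus\C,X_1\oplus\C)_{\theta,p}\doteq(X_0,X_1)_{\theta,p}\oplus\C$, but the couple in the lemma is not of this form: the summand $\C$ appears only in the second space $\cH^{s+2,\gamma+2}_p(\M)\oplus\C$. In the regime where the lemma is applied one has $\gamma<\frac{n+1}{2}$, so the constants are already \emph{contained} in $\cH^{s,\gamma}_p(\M)$; moreover $\cH^{s+2,\gamma+2}_p(\M)$ contains $C^\infty_c(\M)$ and is therefore dense in $\cH^{s,\gamma}_p(\M)$, so there exists no closed complement of $\C$ in $\cH^{s,\gamma}_p(\M)$ that contains $\cH^{s+2,\gamma+2}_p(\M)$. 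Consequently the couple cannot be rewritten compatibly as (Mellin couple) $\oplus\,(\C,\C)$, and the ``interpolation commutes with direct sums'' reduction cannot even be set up. This absorption of the constants into the larger space is exactly what makes the second statement nontrivial, and it is why the paper cites the dedicated result \cite[Lemma~5.2]{RoiSch15} for it; the hypothesis $\gamma+2\theta-\varepsilon>\frac{n+1}{2}$ then only ensures that the sum appearing in the \emph{target} space is direct, as the paper's proof remarks. In short, both halves of your argument reduce the lemma to formal functorial manipulations at precisely the points where the genuine analytic input (mixed-weight interpolation, and interpolation of an asymptotics summand present in only one endpoint) is required.
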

\begin{proof}
The first embeddings follow from \cite[Lemma~5.4]{CorSchSei02} and \cite[Lemma~3.6]{RoiSch15}. The second embeddings  were proved in \cite[Lemma~5.2]{RoiSch15}. The extra assumption $ \gamma+2\theta-\varepsilon> \frac{n+1}{2}$ is to guarantee the sums of the weighted Mellin Sobolev spaces with $\C$ are direct.
\end{proof}

\subsection{\bf Conic $BC^k$-spaces}\label{Subsection 2.2}
In this subsection, we will first give a brief introduction to the usual $BC^k$-spaces on 
$$
(\mathscr{M},g_{\mathscr{M}})\in \{(\tilde{\M},\tilde{g}), (\overline{\R}_+\times \B, dt^2+g_\B),(\Omega, dx_1^2+\cdots+dx_L^2)\},
$$ 
where $\Omega$ is an open subset of some Euclidean space $\R^L$. 

The  space $BC^k(\mathscr{M})$ is defined by
$$
BC^k(\mathscr{M}):=\{u\in C^k(\mathscr{M}):\|u\|_{k,\infty}:= \sum_{i=0}^k \|\nabla_{g_{\mathscr{M}}}^i u\|_\infty<\infty\}.
$$
We also set
$
BC^\infty(\mathscr{M}):=\bigcap\limits_{k\in\Nz} BC^k(\mathscr{M}).
$

Similarly, we can also define conic $BC^k$-spaces on $(\M,g)$ with respect to the conic metric $g$. 
More precisely, the space $\cC_g^k(\M)$ consists of all the $k-$times continuously differentiable functions $u$ such that 
$$
\|u\|_{k,\infty;c}:=\| (1-\omega)u \|_{k,\infty}+ \sum_{j+|\alpha|\leq k} \|(\partial_x)^j (\frac{1}{x}\partial_y)^\alpha (\omega u)\|_\infty<\infty.
$$
We set $\cC_g(\M):=\cC_g^0(\M)$ and
$$
\cC^\infty_g(\M):=\bigcap _{k\in\Nz} \cC^k_g(\M).
$$

\begin{remark}
In the above definitions of Mellin-Sobolev spaces and conic $BC^k$-spaces, those functions are the usual Bessel potential or $BC^k$-spaces outside $I\times \B$, and the conic metric $g=dx^2+ x^2 g_\B$ only plays a role near the conic singularities. Since we can always decompose a function $u$ into $u=\omega u + (1-\omega)u$.
Therefore, in the sequel, for a function in Mellin-Sobolev spaces or conic $BC^k$-spaces, we will only analyze it inside $I\times \B$.
\end{remark}

Recall that $S_\gamma u(t,y)=e^{(\gamma-\frac{n+1}{2})t} u(e^{-t},y)$. It is not a hard job to check that 
$$
 \|S_{\frac{n+1}{2}} (\omega u)\|_{k,\infty} \leq \|(\omega u)\|_{k,\infty;c} \leq  \|S_{\frac{n+1}{2}+k} (\omega u)\|_{k,\infty}
$$
for any $u\in \cC_g^k(\M)$. The rightmost term can be infinite.

This equality gives rise to the following two propositions.
\begin{prop}
\label{S2.2: Holder-iso}
$$
S_{\frac{n+1}{2}}\in \L (\cC_g^k(I\times \B), BC^k(\overline{\R}_+\times \B)),\quad k\in\Nz.
$$
\end{prop}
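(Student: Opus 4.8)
The linearity of $S_{\frac{n+1}{2}}$ is immediate from its definition, so the whole content of the proposition is the boundedness estimate $\|S_{\frac{n+1}{2}}u\|_{k,\infty}\le\|u\|_{k,\infty;c}$ for $u\in\cC_g^k(I\times\B)$; this is exactly the left-hand inequality recorded just above the statement, now read for a function living directly on the collar (so that no cut-off $\omega$ is needed). My plan is therefore to justify that inequality by a change of variables. When $\gamma=\frac{n+1}2$ the exponential weight $e^{(\gamma-\frac{n+1}2)t}$ collapses to $1$, so $S_{\frac{n+1}2}$ is simply pullback along the diffeomorphism $(t,y)\mapsto(e^{-t},y)$ carrying $\overline{\R}_+\times\B$ onto $I\times\B$; concretely $S_{\frac{n+1}2}u(t,y)=u(e^{-t},y)$. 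I would then compute the coordinate derivatives $\partial_t^j\partial_y^\alpha$ of $S_{\frac{n+1}2}u$ and express them through the conic derivatives $\partial_x^i(\frac1x\partial_y)^\beta u$ defining $\|\cdot\|_{k,\infty;c}$.

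The computation rests on two relations under $x=e^{-t}$: acting on functions of $x$ the operator $\partial_t$ becomes $-x\partial_x$, while $\partial_y^\alpha=x^{|\alpha|}(\frac1x\partial_y)^\alpha$ because $x$ and $y$ are independent. Since $\partial_t$ and $\partial_y$ commute, one gets $\partial_t^j\partial_y^\alpha(S_{\frac{n+1}2}u)=(-1)^j\,(x\partial_x)^j\partial_y^\alpha u$ evaluated at $x=e^{-t}$, and substituting $\partial_y^\alpha=x^{|\alpha|}(\frac1x\partial_y)^\alpha$ and pulling the factor $x^{|\alpha|}$ through $x\partial_x$ yields $(-1)^j x^{|\alpha|}(|\alpha|+x\partial_x)^j(\frac1x\partial_y)^\alpha u$. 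Expanding $(|\alpha|+x\partial_x)^j$ and then each $(x\partial_x)^l$ produces a finite sum, with fixed coefficients depending only on $j$ and $|\alpha|$, of terms $x^{|\alpha|+i}\,\partial_x^i(\frac1x\partial_y)^\alpha u$ with $0\le i\le j$. Here the ordering matters: the conic derivatives carry all $\partial_x$'s on the outside, which is precisely the form produced by this expansion, so no stray commutator terms appear. Each summand is a conic derivative of total order $i+|\alpha|\le j+|\alpha|\le k$, and---the decisive point---the weight $x^{|\alpha|+i}$ has nonnegative exponent, hence is $\le1$ on $I=(0,1]$. Thus $|\partial_t^j\partial_y^\alpha(S_{\frac{n+1}2}u)|$ is pointwise dominated by $\sum_{i+|\alpha|\le k}|\partial_x^i(\frac1x\partial_y)^\alpha u|$, which is exactly what makes the estimate hold in the favorable direction; the same mechanism explains why the reverse bound must instead carry the heavier weight $S_{\frac{n+1}2+k}$, since there the scaling factors would appear with negative exponents.

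To pass from the coordinate quantities $\partial_t^j\partial_y^\alpha$ to the intrinsic $BC^k$-norm built from $\nabla_{\bar g}^i$ on $(\overline{\R}_+\times\B,dt^2+g_\B)$, I would use the standard fact that over a finite atlas of the compact base $\B$ the Christoffel symbols of $g_\B$ and their derivatives are bounded, so the covariant derivatives $\nabla_{\bar g}^i$ are equivalent, with uniform constants, to the coordinate expressions $\partial_t^j\partial_y^\alpha$ of order $\le i$, the flat $t$-direction contributing no extra terms. The one genuinely technical point is bookkeeping: correctly tracking the iterated action of the non-commuting operators $\partial_x$ and $\frac1x\partial_y$ and confirming that after normal ordering only nonnegative powers of $x$ survive. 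Once this is checked the proposition follows by reading off $\|S_{\frac{n+1}2}u\|_{k,\infty}\le\|u\|_{k,\infty;c}$, giving $S_{\frac{n+1}2}\in\L(\cC_g^k(I\times\B),BC^k(\overline{\R}_+\times\B))$ with operator norm at most one.
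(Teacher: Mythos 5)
Your proposal is correct and follows essentially the same route as the paper, which simply records the inequality $\|S_{\frac{n+1}{2}}(\omega u)\|_{k,\infty}\leq \|\omega u\|_{k,\infty;c}$ and reads the proposition off from it; your change of variables $x=e^{-t}$, the commutation $(x\partial_x)^j x^{|\alpha|}=x^{|\alpha|}(|\alpha|+x\partial_x)^j$, and the normal-ordered expansion into terms $x^{|\alpha|+i}\partial_x^i(\tfrac1x\partial_y)^\alpha u$ with nonnegative powers of $x$ is exactly the computation the paper leaves as "not a hard job to check." The only small caveat is the claim that the operator norm is at most one: the binomial/Stirling coefficients and the chart constants relating $\nabla_{\bar g}^i$ to coordinate derivatives can make the constant exceed $1$ under the stated norm conventions, but this does not affect the boundedness assertion, which is all the proposition claims.
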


\begin{prop}
\label{S2.2: Sobolev-embedding}
Suppose that $s>k+\frac{n+1}{p}$ and $\gamma\geq \frac{n+1}{2}+k$. Then
$$
\cH^{s,\gamma}_p(\M)\hookrightarrow \cC_g^k(\M).
$$
\end{prop}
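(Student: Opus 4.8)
The plan is to reduce the Sobolev embedding on the conic manifold to the classical Sobolev embedding on the cylinder $\overline{\R}_+\times \B$ via the transformation $S_\gamma$, using the norm equivalences already recorded in the excerpt. By the Remark following the definition of $\cC_g^k$, it suffices to work inside $I\times \B$, i.e.\ to bound $\|\omega u\|_{k,\infty;c}$ by a constant times $\|\omega u\|_{s,\gamma;p}$, since the $(1-\omega)u$ part is handled by the ordinary Sobolev embedding $H^s_p(\M)\hookrightarrow BC^k(\M)$ on the smooth region under the hypothesis $s>k+\frac{n+1}{p}$.

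First I would invoke the chain of inequalities stated just before Proposition~\ref{S2.2: Holder-iso}, namely
$$
\|\omega u\|_{k,\infty;c}\leq \|S_{\frac{n+1}{2}+k}(\omega u)\|_{k,\infty},
$$
which controls the conic $BC^k$-seminorm by the ordinary $BC^k$-norm of the shifted function on the cylinder with weight parameter $\frac{n+1}{2}+k$. Next I would apply the classical Sobolev embedding $H^s_p(\overline{\R}_+\times\B)\hookrightarrow BC^k(\overline{\R}_+\times\B)$, which is valid precisely because $s>k+\frac{n+1}{p}$ (the dimension of the cylinder being $n+1$), to obtain
$$
\|S_{\frac{n+1}{2}+k}(\omega u)\|_{k,\infty}\leq C\,\|S_{\frac{n+1}{2}+k}(\omega u)\|_{s,p}.
$$
Finally, by Lemma~\ref{S2.1: Sobolev-iso}(i), the map $S_{\frac{n+1}{2}+k}$ is an isomorphism from $\cH^{s,\frac{n+1}{2}+k}_p(I\times\B)$ onto $H^s_p(\overline{\R}_+\times\B)$, so the right-hand side is comparable to $\|\omega u\|_{s,\frac{n+1}{2}+k;p}$.

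To close the argument I would use the weight hypothesis $\gamma\geq \frac{n+1}{2}+k$ together with the elementary monotonicity of Mellin--Sobolev spaces in the weight, giving the continuous inclusion
$$
\cH^{s,\gamma}_p(I\times\B)\hookrightarrow \cH^{s,\frac{n+1}{2}+k}_p(I\times\B),
$$
which is the first inclusion already noted in the proof of Lemma~\ref{S2.1: Sobolev-iso}(ii) (decreasing the weight on the bounded collar only enlarges the space). Chaining these three estimates yields $\|\omega u\|_{k,\infty;c}\leq C\,\|\omega u\|_{s,\gamma;p}$, and combining with the smooth-region estimate completes the proof. I do not expect a genuine obstacle here: every ingredient is either the stated norm-equivalence for $S_\gamma$, the classical cylinder embedding, or the weight-monotonicity of the Mellin scale. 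The only point requiring mild care is matching the weight index in $S_\gamma$ with the exponent $\frac{n+1}{2}+k$ appearing in the seminorm bound and verifying that the two weight shifts (the one in the seminorm inequality and the one in the embedding hypothesis) are compatible, which they are under $\gamma\geq\frac{n+1}{2}+k$.
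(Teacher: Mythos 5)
Your proposal is correct and follows essentially the same route as the paper's proof: reduce to the critical weight $\frac{n+1}{2}+k$ by weight monotonicity, transfer to the cylinder via $S_{\frac{n+1}{2}+k}$ using the seminorm inequality and Lemma~\ref{S2.1: Sobolev-iso}(i), apply the classical embedding $H^s_p(\overline{\R}_+\times\B)\hookrightarrow BC^k(\overline{\R}_+\times\B)$, and handle $(1-\omega)u$ by the ordinary Sobolev embedding. The paper merely states this more tersely, leaving the seminorm inequality and the isomorphism implicit.
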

\begin{proof}
By definition, we have $\cH^{s,\gamma}_p(\M)\hookrightarrow \cH^{s,\frac{n+1}{2}+k}_p(\M)$. If $u\in \cH^{s,\frac{n+1}{2}+k}_p(\M)$, then it follows from the assumption $s>k+\frac{n+1}{p}$ and the conventional Sobolev-embedding theorem, cf. \cite[Theorem~14.2(ii)]{Ama13}, that
$$
\| S_{\frac{n+1}{2}+k} (\omega u)\|_{BC^k(\overline{\R}_+\times\B)} \leq C\| S_{\frac{n+1}{2}+k} (\omega u)\|_{H^s_p(\overline{\R}_+\times\B)}.
$$
Now applying the  conventional Sobolev embedding theorem once more to $(1-\omega)u$ will establish the desired result.
\end{proof}
\begin{remark}
It follows from \cite[Corollary 2.5]{RoiSch13} that if $s>\frac{n+1}{p}$, we actually have
the estimate
$
 |u(x,y)|\leq C x^{\gamma -\frac{n+1}{2}} \|u\|_{s,\gamma;p}
$.
\end{remark}

\subsection{\bf The Nemyskii operator in Mellin Sobolev spaces}\label{Subsection 2.3}
One of the crucial steps in showing the well-posedness and regularity of the harmonic map heat flow~\eqref{S1: HHF} is to obtain the necessary regularity for the operator
$$
[u\mapsto A_g(u)(\nabla u, \nabla u)].
$$
To this end, the objective of this subsection is to establish a Nemyskii operator theorem for Mellin Sobolev spaces defined in Section~\ref{Subsection 2.1}. 
However, in order not to cause too much distraction and to keep this article in a reasonable length, we will not state the optimal result, but only confine ourselves to the least necessary level.

For any Banach space $E$, one can  generalize the definitions of Mellin Sobolev and conic $BC^k$-spaces introduced in Sections~2.1~and~2.2 to $E$-valued spaces  without difficulty. 
We will denote these generalized spaces by $\cH^{s,\gamma}_p(\M,E)$ and $\cC_g^k(\M,E)$, respectively.

Suppose $X_i$ with $i=0,1,2$ are Banach spaces. We call
$$
\bullet: (v_1,v_2)\mapsto v_1\bullet v_2=v_0,\qquad v_i\in X_i,
$$
a Banach multiplication from $X_1\times X_2\to X_0$ if it satisfies
$$
\|v_0\|_{X_0} \leq c \|v_1\|_{X_1} \|v_2\|_{X_2}.
$$ 
\begin{theorem}
\label{S2.3: ptwise-mul}
Let $\bullet$ be a Banach multiplication from $X_1\times X_2\to X_0$. Then $[(v_1,v_2)\mapsto v_1\bullet v_2]$ is a continuous bilinear map from
\begin{itemize}
\item[(i)] $\cH^{s,\gamma}_p(\M,X_1)\times \cH^{s,\gamma^\prime}_p(\M,X_2)$ to $\cH^{s,\gamma+\gamma^\prime-\frac{n+1}{2}}_p(\M,X_0)$ if $s>\frac{n+1}{p}$. In particular, if in addition $\gamma\geq \frac{n+1}{2}$, $\cH^{s,\gamma}_p(\M)$ is a Banach algebra.
\item[(ii)] $\cH^{s,\gamma}_p(\M,X_1)\times \cC_g^k(\M,X_2)$ to $\cH^{s,\gamma}_p(\M,X_0)\times0)$ if $k\geq s$.
\end{itemize}
\end{theorem}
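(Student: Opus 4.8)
The plan is to reduce the multiplication estimate to the standard (unweighted) Bessel potential space setting on $\overline{\R}_+\times\B$ by conjugating with the isomorphisms $S_\gamma$ of Lemma~\ref{S2.1: Sobolev-iso}(i), and then to invoke the classical pointwise multiplication theorem for $H^s_p$-spaces in the range $s>\frac{n+1}{p}$. Since by the Remark at the end of Section~\ref{Subsection 2.2} every function is the usual Bessel potential space outside $I\times\B$, it suffices to work inside $I\times\B$ and estimate $\omega(v_1\bullet v_2)$. The crucial bookkeeping observation is the weight identity
\begin{equation}
\label{Sgamma-multiplicativity}
S_{\gamma+\gamma'-\frac{n+1}{2}}\big(u_1\, u_2\big)=\big(S_\gamma u_1\big)\cdot\big(S_{\gamma'}u_2\big),
\end{equation}
which follows directly from $S_\gamma u(t,y)=e^{(\gamma-\frac{n+1}{2})t}u(e^{-t},y)$: the exponential weights add, and $(\gamma-\frac{n+1}{2})+(\gamma'-\frac{n+1}{2})=(\gamma+\gamma'-\frac{n+1}{2})-\frac{n+1}{2}$, exactly the exponent attached to the target weight. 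Thus $S_\gamma$ intertwines the weighted product with the ordinary product.

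For part (i), I would argue as follows. Let $v_i\in\cH^{s,\gamma_i}_p(\M,X_i)$. Using \eqref{Sgamma-multiplicativity} applied to $\omega v_1\bullet v_2$ (absorbing one cut-off factor, which is harmless as $\omega^2$ and $\omega$ generate the same space up to equivalent norms), one gets
$$
\big\|S_{\gamma+\gamma'-\frac{n+1}{2}}\big(\omega(v_1\bullet v_2)\big)\big\|_{H^s_p(\overline{\R}_+\times\B,X_0)}
\;\lesssim\; \big\|\big(S_\gamma(\omega v_1)\big)\bullet\big(S_{\gamma'}(\omega v_2)\big)\big\|_{H^s_p(\overline{\R}_+\times\B,X_0)}.
$$
Now $H^s_p(\overline{\R}_+\times\B,X_j)$ is a space on a fixed manifold of dimension $n+1$, so for $s>\frac{n+1}{p}$ the classical vector-valued multiplication theorem — the Banach multiplication $\bullet\colon X_1\times X_2\to X_0$ combined with the algebra property of $H^s_p$ above the critical Sobolev exponent — yields
$$
\big\|\big(S_\gamma(\omega v_1)\big)\bullet\big(S_{\gamma'}(\omega v_2)\big)\big\|_{H^s_p}
\;\lesssim\;\|S_\gamma(\omega v_1)\|_{H^s_p}\,\|S_{\gamma'}(\omega v_2)\|_{H^s_p}.
$$
Pulling each factor back through Lemma~\ref{S2.1: Sobolev-iso}(i) gives the bound $\|v_1\|_{s,\gamma;p}\,\|v_2\|_{s,\gamma';p}$, which is the assertion. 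The ``in particular'' clause follows by specializing $\gamma'=\gamma\geq\frac{n+1}{2}$ and $\bullet$ to scalar multiplication, for then $\gamma+\gamma'-\frac{n+1}{2}=\gamma+(\gamma-\frac{n+1}{2})\geq\gamma$, so $\cH^{s,\gamma+\gamma'-\frac{n+1}{2}}_p\hookrightarrow\cH^{s,\gamma}_p$ and $\cH^{s,\gamma}_p(\M)$ is closed under multiplication.

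For part (ii), the mismatch is that one factor lies in a conic $BC^k$-space rather than a Mellin Sobolev space, so instead of the symmetric algebra estimate I would use a product rule of the form $\|fg\|_{H^s_p}\lesssim\|f\|_{H^s_p}\|g\|_{BC^k}$, valid when $k\geq s$, which controls the highest $s$ derivatives of the product by distributing at most $s\le k$ derivatives onto the bounded factor. Applying $S_\gamma$ to $v_1$ and $S_{\frac{n+1}{2}}$ to $v_2$ and using Proposition~\ref{S2.2: Holder-iso} to land $S_{\frac{n+1}{2}}(\omega v_2)$ in $BC^k(\overline{\R}_+\times\B,X_2)$, together with $S_\gamma(u_1u_2)=S_\gamma(u_1)\cdot S_{\frac{n+1}{2}}(u_2)$ (the special case $\gamma'=\frac{n+1}{2}$ of \eqref{Sgamma-multiplicativity}), reduces the claim to this classical product rule on the fixed manifold $\overline{\R}_+\times\B$. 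The main obstacle, and the point requiring care, is the precise formulation and citation of the vector-valued $H^s_p\cdot BC^k$ multiplication estimate for non-integer $s$ (where the $BC^k$ factor must supply enough bounded derivatives, hence the sharp requirement $k\geq s$), and verifying that the cut-off function $\omega$ may be inserted and removed freely without disturbing the weight exponents in \eqref{Sgamma-multiplicativity}; both are technical but standard, so I expect the substance of the proof to lie entirely in the clean intertwining identity \eqref{Sgamma-multiplicativity} that converts the weighted problem into an unweighted one.
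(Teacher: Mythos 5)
Your part (i) is essentially the paper's own proof: your intertwining identity $S_{\gamma+\gamma'-\frac{n+1}{2}}(u_1u_2)=(S_\gamma u_1)\cdot(S_{\gamma'}u_2)$ is precisely the middle equality in the paper's chain of estimates \eqref{S2.3: eq 2.2}, after which the paper, like you, invokes the generalization of the classical pointwise multiplication theorem on the half-cylinder $\overline{\R}_+\times\B$ for $s>\frac{n+1}{p}$ and pulls the factors back through Lemma~\ref{S2.1: Sobolev-iso}(i); your derivation of the Banach algebra clause from the weight comparison $2\gamma-\frac{n+1}{2}\geq\gamma$ is also what the paper intends. For part (ii), however, you take a genuinely different route. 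The paper checks (ii) directly from the definitions of the two scales when $s\in\Nz$ and $s\leq k$ (a Leibniz-rule computation pairing the weighted derivatives $x^{\frac{n+1}{2}-\gamma}(x\partial_x)^j\partial_y^\alpha$ against the conic $BC^k$ derivatives $(\partial_x)^j(\tfrac{1}{x}\partial_y)^\alpha$), and then obtains non-integer $s$ by complex interpolation via Lemma~\ref{S2.1: Sobolev-interpolation-complex}. You instead conjugate everything to the cylinder using the special case $S_\gamma(u_1u_2)=S_\gamma(u_1)\cdot S_{\frac{n+1}{2}}(u_2)$, place $S_{\frac{n+1}{2}}(\omega v_2)$ in $BC^k(\overline{\R}_+\times\B,X_2)$ via Proposition~\ref{S2.2: Holder-iso}, and quote the classical product estimate $\|fg\|_{H^s_p}\leq C\|f\|_{H^s_p}\|g\|_{BC^k}$ for $k\geq s$. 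Both arguments are sound. Yours is more uniform, since the same conjugation trick disposes of (i) and (ii) at once, and it correctly uses only the one-sided boundedness that Proposition~\ref{S2.2: Holder-iso} actually provides ($S_{\frac{n+1}{2}}$ is not an isomorphism onto $BC^k$, but your argument only needs that direction). What the paper's route buys is self-containedness at fractional orders: the Banach-valued $H^s_p\cdot BC^k$ estimate for non-integer $s$ is itself normally proved by exactly the integer-case-plus-interpolation argument that the paper writes out at the level of the Mellin--Sobolev scale, so the paper makes that dependence explicit where you outsource it to a citation. One bookkeeping point, which you flagged and which applies to both write-ups: one should insert a second cut-off $\tilde\omega\equiv 1$ on $\supp\,\omega$ so that $\omega(v_1\bullet v_2)=(\omega v_1)\bullet(\tilde\omega v_2)$ before applying the $S$-maps.
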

\begin{proof}
As we have pointed out in Section 2.2, only the part of a function in $I\times B$ will be considered.
For notional brevity, let $\vartheta=\gamma+\gamma^\prime-\frac{n+1}{2}$. For any $u\in \cH^{s,\gamma}_p(\M,X_1)$ and $v\in \cH^{s,\gamma^\prime}_p(\M,X_2)$, we have
\begin{align}
\label{S2.3: eq 2.2}
\notag \|u\bullet v\|_{s,\vartheta;p}&\leq c\|S_\vartheta(u \bullet v)(t,y)\|_{s,p}\\
\notag &=c\| e^{\vartheta-\frac{n+1}{2}} u(e^{-t},y)\bullet v(e^{-t},y)\|_{s,p}\\
\notag &=c\| e^{\gamma-\frac{n+1}{2}} u(e^{-t},y)\bullet e^{\gamma^\prime-\frac{n+1}{2}}v(e^{-t},y)\|_{s,p}\\
\notag &\leq c\| e^{\gamma-\frac{n+1}{2}} u(e^{-t},y)\|_{H^s_p(\bar{R}_+\times \B,X_1)} \| e^{\gamma^\prime-\frac{n+1}{2}} v(e^{-t},y)\|_{H^s_p(\bar{R}_+\times \B,X_2)}\\
&\leq c \|u\|_{\cH^{s,\gamma}_p(I\times \B,X_1)}   \|v\|_{\cH^{s,\gamma^\prime}_p(I\times \B,X_2)} .     
\end{align}
Step \eqref{S2.3: eq 2.2} follows from an easy generalization of the conventional pointwise multiplication theorem on compact manifolds to the half cylinder $\overline{\R}_+\times\B$. 

This proves (i). 
One can check directly via the definitions of Mellin Sobolev and conic $BC^k$-spaces that
Statement (ii) holds for $s\in\Nz$ and $s\leq k$. For non-integer $s$, the assertion follows from the interpolation theory and Lemma~\ref{S2.1: Sobolev-interpolation-complex}.
\end{proof}
 

In the rest of this section, we assume that $\cO\subset \R^L$ for some $L\in\N$ is open, $\mathscr{M}$ is a manifold, and $E$ is a Banach space. Let $\phi:\cO\to E$. Then the Nemyskii operator, or the so called substitution operator, $\Phi$ induced by $\phi$ is defined by
$$
\Phi: \cO^{\mathscr{M}}\to E^{\mathscr{M}}, \quad u\mapsto \phi(u).
$$
We use ${\rm Rng}(f)$ to denote the range of an $\R^L$-valued function $f$. For a set $X$ of $\R^L$-valued functions, similarly, we set
$$
{\rm Rng}(X)：= \bigcup_{f\in X} {\rm Rng}(f).
$$ 

\begin{prop}
\label{S2.3: Mapping prop of Nemyskii}
Suppose that  $p>n+1$ and $\gamma\geq \frac{n+1}{2}$.
Let $\phi\in BC^\nu(\cO,E)$ with $[s]\leq \nu \in\Nz $ and $X$ be an open subset of $\cH^{s,\gamma}_p(\M, \R^L )\oplus \R^L$ such that ${\rm Rng}(X)\subset \cO$. 
Then for any $\vartheta<\frac{n+1}{2}$
$$
\Phi(X)\subset \cH^{[s],\vartheta}_p(\M, E).
$$
\end{prop}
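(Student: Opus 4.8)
The plan is to reduce the whole statement to weighted $L_p$-estimates inside the collar $I\times\B$. Since $\cH^{s,\gamma}_p(\M,\R^L)\hookrightarrow\cH^{[s],\gamma}_p(\M,\R^L)$ while the target space carries the integer smoothness $[s]$, I may assume $s=[s]=:k\in\Nz$ from the start; then $\phi\in BC^\nu$ with $\nu\ge k$. Write $u=w+\p$ with $w\in\cH^{k,\gamma}_p(\M,\R^L)$ and $\p\in\R^L$. As explained in Section~\ref{Subsection 2.2}, off the collar the assertion is the classical composition theorem on the compact manifold $\tilde\M$, so it suffices to bound $x^{\frac{n+1}{2}-\vartheta}(x\partial_x)^j\partial_y^\alpha(\omega\,\phi(u))$ in $L_p(I\times\B,\tfrac{dx}{x}\,dy)$ for all $j+|\alpha|\le k$. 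When $k\ge1$, Proposition~\ref{S2.2: Sobolev-embedding} (with $\gamma\ge\frac{n+1}{2}$ and $k\ge1>\frac{n+1}{p}$, using $p>n+1$) yields $w\in\cC_g(\M)$, so $u$ is bounded and continuous.

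Because $x\partial_x$ and the $\partial_{y_i}$ are first-order derivations, the Faà di Bruno formula gives, modulo commutator terms in which a derivative falls on $\omega$ (these are supported in $\{x\ge c\}$ for some $c>0$ and are controlled by the classical theory on that compact collar piece),
\[
(x\partial_x)^j\partial_y^\alpha\phi(u)=\sum \phi^{(l)}(u)\bigl(v_1,\dots,v_l\bigr),\qquad v_i:=(x\partial_x)^{j_i}\partial_y^{\alpha_i}w,
\]
the sum running over partitions of the $M:=j+|\alpha|$ applied derivations into $l\ge0$ nonempty blocks of orders $m_i:=j_i+|\alpha_i|\ge1$ with $\sum_i m_i=M$ (the empty partition $l=0$ occurs only when $M=0$; the constant $\p$ is annihilated by every positive-order conic derivative, whence $v_i$ involves only $w$). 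This holds in the weak sense since $u$ is bounded and $\phi\in C^k$. Each conic derivation lowers the Mellin smoothness by its order while preserving the weight, so $v_i\in\cH^{k-m_i,\gamma}_p(\M,\R^L)$, while $\phi^{(l)}(u)$ is bounded because $\phi^{(l)}\in BC^{\nu-l}\subset BC^0$ and $\mathrm{Rng}(u)\subset\cO$.

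I then bound each summand in $\cH^{0,\vartheta}_p$, which by definition is the required weighted-$L_p$ estimate, distinguishing three cases by the block number $l$. First, if $l=0$ (so $M=0$) the term is $\phi(u)$, which is bounded, and since $\vartheta<\frac{n+1}{2}$ the weight $x^{\frac{n+1}{2}-\vartheta}$ is $L_p(\tfrac{dx}{x})$ near $0$, so $\phi(u)\in\cH^{0,\vartheta}_p$; this is exactly the step that forces $\vartheta<\frac{n+1}{2}$, reflecting that $\phi(\p)$ is a nonzero constant. Next, for $l=1$ the term is $\phi'(u)\,v_1$ with $v_1\in\cH^{k-M,\gamma}_p\hookrightarrow\cH^{0,\gamma}_p\hookrightarrow\cH^{0,\vartheta}_p$ (lowering smoothness then weight, as $\gamma\ge\frac{n+1}{2}>\vartheta$); since $\cH^{0,\vartheta}_p$ is stable under multiplication by the bounded factor $\phi'(u)$ (immediate from the $S_\vartheta$-description), the term lies in $\cH^{0,\vartheta}_p$. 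Finally, for $l\ge2$ set $\sigma:=\min_i(k-m_i)=k-\max_i m_i$; since the remaining $l-1$ blocks have order $\ge1$ we have $\max_i m_i\le M-(l-1)\le k-(l-1)$, hence $\sigma\ge l-1\ge1>\frac{n+1}{p}$. Embedding every $v_i$ into the common space $\cH^{\sigma,\gamma}_p$ and applying Theorem~\ref{S2.3: ptwise-mul}(i) exactly $l-1$ times gives
\[
\bigl(v_1,\dots,v_l\bigr)\in\cH^{\sigma,\;l\gamma-(l-1)\frac{n+1}{2}}_p(\M),
\]
and $l\gamma-(l-1)\frac{n+1}{2}=\frac{n+1}{2}+l\bigl(\gamma-\frac{n+1}{2}\bigr)\ge\frac{n+1}{2}>\vartheta$ together with $\sigma>0$ yields an embedding into $\cH^{0,\vartheta}_p$; multiplying by the bounded multiplier $\phi^{(l)}(u)$ keeps the term in $\cH^{0,\vartheta}_p$. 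Summing over the finitely many partitions and over $j+|\alpha|\le k$ gives $\phi(u)\in\cH^{k,\vartheta}_p(\M,E)=\cH^{[s],\vartheta}_p(\M,E)$.

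The crux is the bookkeeping in the case $l\ge2$: one must certify that every factor retains Mellin smoothness above the embedding threshold $\frac{n+1}{p}$ so that the multiplication Theorem~\ref{S2.3: ptwise-mul}(i) applies. This is precisely where the hypothesis $p>n+1$ (equivalently $\frac{n+1}{p}<1$) is decisive, in tandem with the elementary bound $\max_i m_i\le M-(l-1)$ on block orders. A secondary, routine point is justifying the weak Faà di Bruno identity and disposing of the $\omega$-commutators near $x=c$, both of which follow from the boundedness of $u$ and the classical composition theory on the compact part of $\M$; the weight hypothesis $\gamma\ge\frac{n+1}{2}$ is what guarantees that all genuinely differentiated ($l\ge1$) contributions land in weight $\ge\frac{n+1}{2}$, so that only the undifferentiated term $\phi(\p)$ dictates the output weight $\vartheta<\frac{n+1}{2}$.
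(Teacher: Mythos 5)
Your proposal is correct and follows essentially the same route as the paper's own proof: the undifferentiated composition $\phi(u)$ is merely bounded, so the integrable weight $x^{\frac{n+1}{2}-\vartheta}$ with $\vartheta<\frac{n+1}{2}$ absorbs it (the paper's $s\in[0,1)$ case), while all differentiated terms are handled by the higher-order chain rule (Fa\`a di Bruno), the boundedness of $\partial^l\phi$, the pointwise multiplication Theorem~\ref{S2.3: ptwise-mul}, and the definition of the Mellin--Sobolev spaces (the paper's $s\geq 1$ case). Your write-up simply makes explicit the partition bookkeeping, the role of $p>n+1$ in keeping every factor above the threshold $\frac{n+1}{p}$, and the $\omega$-commutator details that the paper leaves implicit.
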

\begin{proof}
Let  $u\in X$. When $s\in [0,1)$, since $\|\Phi(u)\|_\infty<\infty$, for any $\vartheta<\frac{n+1}{2}$ we obviously have
$$
\Phi(u)\in  \cH^{0,\vartheta}_p(\M, E).
$$

For $s \geq 1$, note that 
$$
\partial^j \phi \in BC^{\nu-j}(\cO, \L^j(\R^L,E)),\quad j\leq \nu.
$$
The assertion now follows from the definition of Mellin Sobolev spaces, Theorem~\ref{S2.3: ptwise-mul} and the chain rule for higher order derivatives, that is, 
$$
\frac{\partial^l}{\partial z^l}\phi(u)=\sum_{k_1+2k_2+\cdots+l k_l=l} \frac{l!}{k_1! k_2!\cdots k_l!} \partial^k\phi(u) (\frac{\partial_z u}{1!})^{k_1}(\frac{\partial_z^2 u}{2!})^{k_2}\cdots (\frac{\partial_z^l u}{l!})^{k_l}
$$
where $z\in\{x,y_1,\cdots,y_n\}$, $l\leq [s]$ and $k=k_1+k_2+\cdots+k_l$.
\end{proof}

\begin{theorem}
\label{S2.3: Nemyskii thm}
Suppose that $k\in\Nz$, $s>\frac{n+1}{p}$, $p>n+1$ and $\gamma\geq \frac{n+1}{2}$.
Let $\phi\in BC^{\nu+k+1}(\cO,E)$ for some $[s]\leq \nu \in\Nz $ and $X$ be an open subset of $\cH^{s,\gamma}_p(\M, \R^L )\oplus \R^L$ such that ${\rm Rng}(X)\subset \cO$. 
Then for any $\vartheta<\frac{n+1}{2}$
$$
\Phi\in C^{k+1-}(X,\cH^{[s],\vartheta}_p(\M, E) ).
$$
\begin{proof}
For each $u\in X$, by the openness of $\cO$ and Proposition~\ref{S2.2: Sobolev-embedding}, we can find a bounded convex neighbourhood $U$ of $u$ in $\cH^{s,\gamma}_p(\M, \R^L)\oplus \R^L$ such that $U\subset X.$

(i) We first consider the case $k=0$. Pick $u,v\in U$. Then
$$
\Phi(u)-\Phi(v)=m(u,v)(u-v),
$$
where 
$$
m(u,v)=\int_0^1 \partial\phi(v+\tau(u-v))\, d\tau.
$$
Since $\partial \phi \in BC^\nu(\cO, \L(\R^L,E))$, by Proposition~\ref{S2.3: Mapping prop of Nemyskii} and its proof, one can find a constant $C=C(U)>0$ such that
$$
\|\partial\phi(w)\|_{\cH^{[s],\vartheta}_p(\M,\L(\R^L,E))}\leq C,\quad w\in U.
$$
So we can infer from $\gamma\geq \frac{n+1}{2}$, Proposition~\ref{S2.2: Sobolev-embedding} and Theorem~\ref{S2.3: ptwise-mul} that
$$
\|\Phi(u)-\Phi(v)\|_{\cH^{[s],\vartheta}_p(\M,E)}\leq C \|u-v\|_{\cH^{s,\gamma}_p(\M, \R^L)\oplus \R^L}.
$$
In particular, when $s<1$, we apply both Proposition~\ref{S2.2: Sobolev-embedding} and Theorem~\ref{S2.3: ptwise-mul}(ii) in the above inequality. When $s\geq 1$, applying only $\gamma\geq \frac{n+1}{2}$ and Theorem~\ref{S2.3: ptwise-mul}(i) is sufficient.
This proves the asserted statement for $k=0$.

(ii) Suppose that $k=1$. 
For any $h\in \cH^{s,\gamma}_p(\M, \R^L)\oplus \R^L$ with $\|h\|_{ \cH^{s,\gamma}_p(\M, \R^L)\oplus \R^L}$ small enough,
\begin{equation}
\label{S2.3: eq 2.3}
\Phi(u+h)-\Phi(u)-\partial\phi(u)h= \int_0^1 [\partial\phi(u+\tau h) -\partial \phi(u)]h\, d\tau
\end{equation}
holds point-wise on $\M$.

Because $\partial\phi\in BC^{1+\nu}(\cO, \L(\R^L,E))$, by Step (i), we know that the Nemyskii operator $B$ induced by $\partial\phi$ satisfies
$$
B\in C^{1-}(X , \cH^{[s],\vartheta}_p(\M, \L(\R^L,E)).
$$
Using \eqref{S2.3: eq 2.3}, one can obtain
\begin{align*}
&\|\Phi(u+h)-\Phi(u)-\partial\phi(u)h\|_{\cH^{[s],\vartheta}_p(\M,E)}\\
\leq & \int_0^1 \|[\partial\phi(u+\tau h) -\partial \phi(u)]h\|_{\cH^{[s],\vartheta}_p(\M,E)}\, d\tau\\
\leq &  \int_0^1 \|\partial\phi(u+\tau h) -\partial \phi(u) \|_{\cH^{[s],\vartheta}_p(\M, \L(\R^L,E)}\, d\tau \|h\|_{\cH^{s,\gamma}_p(\M, \R^L)\oplus \R^L}\\
\leq & C \|h\|_{\cH^{s,\gamma}_p(\M, \R^L)\oplus \R^L}^2
\end{align*}
via Proposition~\ref{S2.2: Sobolev-embedding} and Theorem~\ref{S2.3: ptwise-mul} as in Step (i). This implies that
$$
\Phi\in C^{2-}(X, \cH^{[s],\vartheta}_p(\M,E))
$$
with $\partial\Phi(u)h= \partial \phi(u)h$.

(iii) The general case $k>1$ follows now by an induction argument.
\end{proof}
\end{theorem}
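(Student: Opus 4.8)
The plan is to argue by induction on $k$, taking as base case the claim that $\Phi$ is locally Lipschitz, i.e.\ $\Phi\in C^{1-}(X,\cH^{[s],\vartheta}_p(\M,E))$, and then upgrading one order of differentiability at a time. The mapping property $\Phi(X)\subset\cH^{[s],\vartheta}_p(\M,E)$ is already in hand from Proposition~\ref{S2.3: Mapping prop of Nemyskii} (applicable since $\phi\in BC^{\nu+k+1}\subset BC^{\nu}$), so the entire task is the \emph{regularity} of $\Phi$ as a map between these Banach spaces. Since differentiability is local, I would first fix $u\in X$ and, using the Sobolev embedding $\cH^{s,\gamma}_p(\M,\R^L)\hookrightarrow\cC_g^0(\M,\R^L)$ of Proposition~\ref{S2.2: Sobolev-embedding} (valid because $s>\frac{n+1}{p}$ and $\gamma\geq\frac{n+1}{2}$), choose a bounded \emph{convex} neighbourhood $U\subset X$ of $u$; convexity keeps every segment $v+\tau(u-v)$ inside $X$, so its range stays in $\cO$, and boundedness gives uniform control on $\phi$ and its derivatives along $U$.

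For the base case ($k=0$) I would write, pointwise on $\M$,
$$
\Phi(u)-\Phi(v)=m(u,v)\,(u-v),\qquad m(u,v)=\int_0^1\partial\phi\big(v+\tau(u-v)\big)\,d\tau,
$$
for $u,v\in U$. Since $\partial\phi\in BC^{\nu}(\cO,\L(\R^L,E))$, Proposition~\ref{S2.3: Mapping prop of Nemyskii} together with the uniform bound on $U$ gives $\|m(u,v)\|_{\cH^{[s],\vartheta}_p}\leq C(U)$, and it remains to multiply against the increment $u-v$ and land back in $\cH^{[s],\vartheta}_p$. Here I would split on the size of $s$: for $s\geq1$ I pass $u-v$ from $\cH^{s,\gamma}_p$ down to $\cH^{[s],\gamma}_p$ and apply Theorem~\ref{S2.3: ptwise-mul}(i) (legitimate because $[s]\geq1>\frac{n+1}{p}$), producing a factor in $\cH^{[s],\vartheta+\gamma-\frac{n+1}{2}}_p$, which embeds into $\cH^{[s],\vartheta}_p$ by the weight monotonicity in Lemma~\ref{S2.1: Sobolev-iso} once $\gamma\geq\frac{n+1}{2}$; for $s<1$, where $[s]=0$, I instead embed $u-v$ into $\cC_g^0$ and multiply at regularity level $0$ via Theorem~\ref{S2.3: ptwise-mul}(ii). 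Either route yields $\|\Phi(u)-\Phi(v)\|_{\cH^{[s],\vartheta}_p}\leq C\|u-v\|_{\cH^{s,\gamma}_p\oplus\R^L}$, hence $\Phi\in C^{1-}$.

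For the inductive step I expect the derivative to be $\partial\Phi(u)h=\partial\phi(u)\,h$, namely the Nemyskii operator $B$ induced by $\partial\phi$ followed by pointwise multiplication by $h$. To confirm Fréchet differentiability I would use the pointwise Taylor identity
$$
\Phi(u+h)-\Phi(u)-\partial\phi(u)h=\int_0^1\big[\partial\phi(u+\tau h)-\partial\phi(u)\big]h\,d\tau,
$$
bound the bracket in $\cH^{[s],\vartheta}_p$ by the local Lipschitz continuity of $B$ (the base case applied to $\partial\phi\in BC^{\nu+1}$), and multiply by $h$ exactly as above to get an $O(\|h\|^2)$ remainder, giving $\Phi\in C^{2-}$ with the asserted derivative. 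The observation that closes the induction is structural: $\partial\Phi=\mu\circ(B,\id)$ with $\mu$ a bounded bilinear multiplication; since $\partial\phi\in BC^{\nu+k}$, the induction hypothesis yields $B\in C^{k-}$, and composing with the smooth bilinear $\mu$ gives $\partial\Phi\in C^{k-}$, i.e.\ $\Phi\in C^{(k+1)-}$.

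The main obstacle I anticipate is the weight-and-order bookkeeping rather than any single hard estimate. Each differentiation drops the smoothness index from $s$ to $[s]$ and turns one derivative of $\phi$ into a multiplication factor, so I must repeatedly check that the two factors entering Theorem~\ref{S2.3: ptwise-mul} meet its hypotheses (in particular $[s]>\frac{n+1}{p}$, which is exactly where $p>n+1$ is consumed) and that the output weight $\vartheta+\gamma-\frac{n+1}{2}$ always dominates the target weight $\vartheta$, which is where $\gamma\geq\frac{n+1}{2}$ is used. Keeping the smoothness budget $\phi\in BC^{\nu+k+1}$ synchronized with the number of differentiations — so that at the last step $\partial\phi$ still carries enough derivatives to rerun the base case — is the accounting I would watch most carefully.
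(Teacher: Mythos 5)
Your proposal is correct and follows essentially the same route as the paper's proof: the same convex-neighbourhood localization via Proposition~\ref{S2.2: Sobolev-embedding}, the same integral mean-value identity with the uniform bound from Proposition~\ref{S2.3: Mapping prop of Nemyskii} for the Lipschitz base case (including the same case split $s<1$ versus $s\geq 1$ between parts (ii) and (i) of Theorem~\ref{S2.3: ptwise-mul}), the same Taylor identity identifying $\partial\Phi(u)h=\partial\phi(u)h$, and the same induction on $k$. Your explicit formulation of the inductive step as $\partial\Phi=\mu\circ(B,\id)$ merely spells out what the paper leaves as "an induction argument."
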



\section{\bf The Laplace-Beltrami operator}
\label{Section 3}

In this section, we will state some known results about the Laplace-Beltrami operator on conic manifolds, which will serve as the theoretical basis for the analysis of the harmonic map heat flow in the next section.

The Laplace-Beltrami operator $\Delta_g$ induced by the conic metric $g$ is a second order differential operator, which near the conic singularities, i.e. inside $I\times \B$, can be written as
$$
\Delta_g = \frac{1}{x^2}  [ (x\partial_x)^2 +(n-1)(x\partial_x)  + \Delta_{\B} ],
$$
where $\Delta_{\B}$ is the Laplace-Beltrami operator on $\B$ with respect to $g_\B$.

This operator acts in a natural way on the scale of weighted Mellin-Sobolev spaces defined in Section~2:
$$
\Delta_g \in \L(\cH^{s+2,\gamma+2}_p(\M), \cH^{s,\gamma}_p(\M)),
$$
for all $s\geq 0$, $\gamma\in\R$ and $1<p<\infty$, as a bounded differential operator.

The conormal symbol of $\Delta_g$ is
$$
\sigma_\M(\Delta_g)(z):=z^2 - (n-1)z+\Delta_\B,\quad z\in \C.
$$
In particular, $\sigma_\M(\Delta_g)\in \cA(\C, \L(H^{s+2}_p(\B), H^s_p(\B)))$, where $\cA(\C,E)$ denotes the space of holomorphic $E$-valued functions on $\C$ for any Banach space $E$.
The definition of the conormal symbols is inspired by the fact that
$$
M(x\partial_x f)(z)=zM f(z),
$$
where the Mellin transform is defined by
$$
M f (z)=\int_0^\infty x^{z-1}f(x)\, dx,\qquad z\in \C.
$$
We would like to refer the reader to \cite{Les97} for more details of the Mellin transform and the conormal symbols.

\subsection{\bf $\mathscr{R}$-sectorial operators and $L_p$-maximal regularity}\label{Subsection 3.1}

Maximal regularity theory has proved itself a very important tool in the study of nonlinear parabolic equations. 
In combination with a fixed point argument and the implicit function theorem, the theory of maximal regularity can be used to establish wellposedness, regularity and stability of solutions.
In this subsection, we will introduce several important concepts and theorems in maximal regularity theory.  
The reader may refer to the treatises \cite{Ama95,  DenHiePru03, PruSim16} for a thorough survey of the theory. 

For $\theta\in (0,\pi]$, the open sector with angle $2\theta$ is denoted by
$$\Sigma_\theta:= \{\omega\in \mathbb{C}\setminus \{0\}: |\arg \omega|<\theta \}. $$
\begin{definition}
Let $X$ be a complex Banach space, and $\cA$ be a densely defined closed linear operator in $X$ with dense range. $\cA$ is called sectorial if $\Sigma_\theta \subset \rho(-\cA)$ for some $\theta>0$ and
$$ \sup\{\|\mu(\mu+\cA)^{-1}\|_{\L(X)} : \mu\in \Sigma_\theta \}<\infty. $$
The class of sectorial operators in $X$ is denoted by $\S(X)$. 
The spectral angle $\phi_\cA$ of $\cA$ is defined by
$$\phi_\cA:=\inf\{\phi:\, \Sigma_{\pi-\phi}\subset \rho(-\cA),\, \sup\limits_{\mu\in \Sigma_{\pi-\phi}} \|\mu(\mu+\cA)^{-1}\|_{\L(X)}<\infty. \}. $$ 
\end{definition}

\begin{definition}
Suppose that $\cA\in \S(X)$. Then $\cA$ is said to admit bounded imaginary powers if $\cA^{is}\in \L(X)$ for each $s\in\R$, and there exists some constant $C>0$ such that
$$\|\cA^{is}\|_{\L(X)} \leq C, \quad |s|\leq 1. $$
The class of such operators is denoted by $\mathcal{BIP}(X)$.
The power angle $\theta_\cA$ of $\cA$ is defined by
$$\theta_\cA:=\overline{\lim}_{|s|\to\infty} \frac{1}{|s|} \log \|\cA^{is}\|_{\L(X)} .$$
\end{definition}

\begin{definition}
Let $X$ and $Y$ be two Banach spaces. A family of operators $\mathcal{T}\in \L(X,Y)$ is called $\mathscr{R}$-bounded, if there is a constant $C>0$ and $p\in [1,\infty)$ such that for each $N\in\N$, $T_j\in \mathcal{T}$ and $x_j\in X$ and for all independent, symmetric, $\{-1,1\}$-valued random  variables $\varepsilon_j$ on a probability space $(\Omega,\mathcal{M},\mu)$ the inequality
$$ \|\sum\limits_{j=1}^N \varepsilon_j T_j x_j \|_{L_p(\Omega, Y)} \leq C \|\sum\limits_{j=1}^N \varepsilon_j  x_j \|_{L_p(\Omega, X)} $$
is valid. The smallest such $C$ is called the $\mathscr{R}$-bound of $\mathcal{T}$. We denote it by $\mathscr{R}(\mathcal{T})$.
\end{definition}

\begin{definition}
Suppose that $\cA\in \S(X)$. Then $\cA$ is called $\mathscr{R}$-sectorial if there exists some $\phi>0$ such that
$$\mathscr{R}_\cA(\phi):=\mathscr{R}\{\mu(\mu+\cA)^{-1}: \mu\in \Sigma_\phi \}<\infty. $$
The $\mathscr{R}$-angle $\phi^R_\cA$ is defined by
$$\phi^R_\cA:=\inf\{ \theta\in (0,\pi): \mathscr{R}_\cA(\pi-\theta) <\infty \}. $$
The class of $\mathscr{R}$-sectorial operators in $X$ is denoted by $\RS(X)$.
\end{definition}

\begin{definition}
A Banach space $X$ is said to belong to the class $\mathcal{HT}$ if the Hilbert Transform defined by
$$H(f)(t):=\lim\limits_{\epsilon\to 0} \int_{|s|>\epsilon} f(t-s)\frac{ds}{\pi s} ,\quad t\in\R,\, f\in C_0(\R,X) $$
can be extended to a bounded linear operator on $L_p(\R,X)$ for some $p\in (1,\infty)$.
\end{definition}
In particular,  Mellin-Sobolev spaces defined in Section~2.1 are of class $\mathcal{HT}$. 
Suppose that $X$ is a Banach space of class $\mathcal{HT}$. 
Then by \cite[formula~(2.15), Remark~3.2(1), Theorem~4.5]{DenHiePru03}, 
we obtain
the inclusions
\begin{equation}
\label{S3.1: all classes}
\mathcal{BIP}(X) \subset \RS(X) \subset \S(X),
\end{equation}
and the inequalities
\begin{equation}
\label{S3.1: all angles}
\theta_\cA \geq \phi^R_\cA \geq \phi_\cA. 
\end{equation}

\begin{definition}
\label{S3.1: Def-MR}
Assume that $X_1\overset{d}{\hookrightarrow}X_0$ is some densely embedded Banach couple.
Suppose that $\cA\in \S(X_0)$ with $\dom(\cA)=X_1$.
The Cauchy problem  
\begin{equation}
\label{S3.1: Cauchy problem}
\left\{\begin{aligned}
\partial_t u(t) +\cA u(t) &=f(t) &&t\geq 0\\
u(0)&=u_0  &&
\end{aligned}\right. 
\end{equation} 
has maximal $L_p$-regularity if for any 
$$(f,u_0)\in L_p(\R_+, X_0)\times (X_0,X_1)_{1-1/p,p} ,$$
\eqref{S3.1: Cauchy problem}
has a unique solution
$$u\in L_p(\R_+, X_1) \cap H^1_p(\R_+, X_0) .$$
We denote this by 
$$\cA\in \mathcal{MR}_p(X_0).$$
\end{definition}
Note that Definition~\ref{S3.1: Def-MR} basically tells us that an operator
$\cA\in \mathcal{MR}_p(X_0)$ iff the map 
\begin{equation}
\label{S3.1: MR-Lis}
(\partial_t +\cA, \gamma_0)\in\Lis(L_p(\R_+, X_1) \cap H^1_p(\R_+, X_0), L_p(\R_+, X_0)\times (X_0,X_1)_{1-1/p,p} ),
\end{equation}
where $\gamma_0$ is the temporal trace operator.

The class $\mathcal{MR}_p(X_0)$ is closely related to the class of $\mathscr{R}$-sectorial operators.
\begin{prop}
\label{S3.1: Prop-MR-RS}
\cite[Theorem~4.4]{DenHiePru03}
Assume that $X_1\overset{d}{\hookrightarrow}X_0$ is some densely embedded Banach couple.
Suppose that $\cA\in \S(X_0)$ with $\dom(\cA)=X_1$, and $X_0$ belongs to the class $\mathcal{HT}$. Then the Cauchy problem \eqref{S3.1: Cauchy problem} has maximal $L_p$-regularity for all $1<p<\infty$ iff 
$$
\cA\in \RS(X_0)
$$
with $\phi^R_\cA<\pi/2$.
\end{prop}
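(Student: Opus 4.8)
The plan is to recognize maximal $L_p$-regularity as the $L_p(\R,X_0)$-boundedness of an operator-valued Fourier multiplier and then to characterize that boundedness through the Weis operator-valued Mikhlin theorem, whose natural hypothesis is exactly $\mathscr{R}$-sectoriality. First I would reduce to the case $u_0=0$: the contribution of the initial datum is absorbed by the analytic semigroup generated by $-\cA$ (available since $\cA\in\S(X_0)$), and its compatibility with the trace space $(X_0,X_1)_{1-1/p,p}$ is built into Definition~\ref{S3.1: Def-MR}. After extending the half-line problem to $\R$ by a causal extension (using that $\cA$ generates an analytic semigroup, so the relevant symbol extends holomorphically into a half-plane), solving $\partial_t u+\cA u=f$ amounts, after a Fourier transform in time, to $\hat u(\xi)=(i\xi+\cA)^{-1}\hat f(\xi)$. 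Maximal regularity is then equivalent to the statement that the symbol
\[
M(\xi):=\cA(i\xi+\cA)^{-1}=\id - i\xi(i\xi+\cA)^{-1},\qquad \xi\in\R\setminus\{0\},
\]
defines a bounded Fourier multiplier on $L_p(\R,X_0)$; the multiplier $\id-M=i\xi(i\xi+\cA)^{-1}$ governing $\partial_t u$ is then controlled for free.

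For the implication $\cA\in\RS(X_0)$ with $\phi^R_\cA<\pi/2\Rightarrow$ maximal regularity, I would invoke the operator-valued extension of the Mikhlin multiplier theorem: on a Banach space of class $\mathcal{HT}$, a symbol $M\in C^1(\R\setminus\{0\},\L(X_0))$ is a bounded $L_p$-multiplier provided the families $\{M(\xi):\xi\neq 0\}$ and $\{\xi M'(\xi):\xi\neq 0\}$ are $\mathscr{R}$-bounded. The hypothesis $\phi^R_\cA<\pi/2$ guarantees that the imaginary axis minus the origin lies in a sector $\Sigma_{\pi-\theta}$ with $\pi-\theta>\pi/2$ on which $\{\mu(\mu+\cA)^{-1}\}$ is $\mathscr{R}$-bounded; setting $\mu=i\xi$ gives $\mathscr{R}$-boundedness of $\{i\xi(i\xi+\cA)^{-1}\}$ and hence of $\{M(\xi)\}$. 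A direct computation shows $\xi M'(\xi)=-\,i\xi(i\xi+\cA)^{-1}+\big(i\xi(i\xi+\cA)^{-1}\big)^2$, a finite sum of products of the resolvent factor $i\xi(i\xi+\cA)^{-1}$; since $\mathscr{R}$-bounds are stable under sums and products, $\{\xi M'(\xi)\}$ is $\mathscr{R}$-bounded as well, and the multiplier theorem delivers the desired boundedness.

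For the converse I would start from the $L_p(\R,X_0)$-boundedness of the multiplier operator with symbol $M$ and extract $\mathscr{R}$-sectoriality. The decisive tool is the converse principle that, in any Banach space, $L_p$-boundedness of a translation-invariant (convolution) operator forces the associated symbol family to be $\mathscr{R}$-bounded at its Lebesgue points; concretely, testing the bounded operator against randomized sums of modulated bump functions $e^{i\xi_j t}\varphi(t)$ and applying Kahane's contraction principle together with the translation structure shows that $\{M(\xi)=\cA(i\xi+\cA)^{-1}\}$, and therefore $\{i\xi(i\xi+\cA)^{-1}\}=\{\id-M(\xi)\}$, is $\mathscr{R}$-bounded along the imaginary axis. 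A Neumann-series perturbation then propagates this $\mathscr{R}$-bound from the axis into an open sector about it, placing $\cA$ in $\RS(X_0)$ with $\phi^R_\cA<\pi/2$.

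The step I expect to be the main obstacle is the operator-valued Mikhlin theorem itself, and symmetrically the converse extraction of $\mathscr{R}$-boundedness from multiplier boundedness: this is precisely where the $\mathcal{HT}$/UMD hypothesis on $X_0$ is indispensable, since the scalar Mikhlin argument fails and one must run a Littlewood--Paley decomposition in which the $\mathscr{R}$-bounds, rather than mere uniform norm bounds, are exactly what controls the randomized square-function estimates. Since this is the content of \cite[Theorem~4.4]{DenHiePru03}, in the paper itself I would simply cite it; the sketch above records the structure of that argument.
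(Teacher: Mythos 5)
The paper offers no proof of this proposition at all---it is quoted directly from \cite[Theorem~4.4]{DenHiePru03}---and your proposal ends by saying you would do exactly the same, so the approaches coincide. Your sketch of the underlying Weis/Denk--Hieber--Pr\"uss argument (reduction to $u_0=0$, the Fourier symbol $\cA(i\xi+\cA)^{-1}$, the operator-valued Mikhlin theorem for sufficiency, and a Cl\'ement--Pr\"uss-type converse plus a Neumann-series perturbation for necessity) accurately records how the cited theorem is proved, including the correct identity $\xi M'(\xi)=-i\xi(i\xi+\cA)^{-1}+\bigl(i\xi(i\xi+\cA)^{-1}\bigr)^2$; the only inaccuracy is your claim that the $\mathcal{HT}$ hypothesis is indispensable for the converse direction, whereas the extraction of $\mathscr{R}$-boundedness from multiplier boundedness in fact holds in arbitrary Banach spaces, $\mathcal{HT}$ being needed only for sufficiency.
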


Maximal regularity theory is a powerful tool in the
theory of nonlinear parabolic equations. 
To illustrate its power, let us consider the following
abstract evolution equation 
\begin{equation}\label{S3.1: evolution eq}
\begin{cases}
 \partial_t u +\cA u =F(u), &t > 0;\\
 u(0)=u_0,&
\end{cases} 
\end{equation}
in $X_0$. We have the following existence and uniqueness result for
equation~\eqref{S3.1: evolution eq}. 
\begin{theorem}
\label{S3.1: Thm-MR-SL}
\cite[Theorem~2.1]{CleLi93}
Let $1<p<\infty$ and $X_1\overset{d}{\hookrightarrow}X_0$ be a densely
embedded pair of Banach spaces. 
Assume that $\cA\in \mathcal{MR}_p(X_0)$ with $\dom(\cA)=X_1$.
Setting $X_{1-1/p}:=(X_0,X_1)_{1-1/p,p}$,
suppose that $U\subset X_{1-1/p}$ is open and  that $F$ satisfies 
$$
 F\in C^{1-}\bigl(U,  X_0\bigr).
$$
Then for every $u_0\in U$, there exist $T=T(u_0)>0$ and a unique
solution of \eqref{S3.1: evolution eq} on $J=[0,T)$ with
$$
 u\in L_p(J, X_1)\cap H^1_p(J, X_0).
$$
\end{theorem}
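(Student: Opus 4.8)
The plan is to recast the nonlinear problem \eqref{S3.1: evolution eq} as a fixed-point equation in the maximal regularity space and to solve it by the contraction mapping principle on a short time interval. Throughout write $\mathbb{E}_1(J):=L_p(J,X_1)\cap H^1_p(J,X_0)$ and $\mathbb{E}_0(J):=L_p(J,X_0)$, and let $\zbE_1(J):=\{v\in\mathbb{E}_1(J):\gamma_0 v=0\}$ be the subspace of functions with vanishing temporal trace. The hypothesis $\cA\in\mathcal{MR}_p(X_0)$ is, by \eqref{S3.1: MR-Lis}, exactly the statement that $(\partial_t+\cA,\gamma_0)$ is a bounded isomorphism onto $\mathbb{E}_0(\R_+)\times X_{1-1/p}$; restricting to vanishing trace, $\partial_t+\cA$ becomes an isomorphism $L:=(\partial_t+\cA)|_{\zbE_1(J)}\in\Lis(\zbE_1(J),\mathbb{E}_0(J))$ on any interval $J=[0,T)$.

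First I would fix $u_0\in U$ and let $\bar u\in\mathbb{E}_1(\R_+)$ be the solution of the linear homogeneous problem $\partial_t\bar u+\cA\bar u=0$, $\bar u(0)=u_0$, whose existence is guaranteed because $u_0$ lies in the trace space $X_{1-1/p}$. Making the ansatz $u=\bar u+v$ with $v\in\zbE_1(J)$, equation \eqref{S3.1: evolution eq} becomes $\partial_t v+\cA v=F(\bar u+v)$, $v(0)=0$, i.e. the fixed-point problem $v=\Gamma(v):=L^{-1}F(\bar u+v)$. For this to make sense I must keep the pointwise values of $\bar u+v$ inside $U$; since $\mathbb{E}_1(J)\hookrightarrow BUC(J,X_{1-1/p})$ and $\bar u(0)=u_0\in U$ with $U$ open, this holds on a closed ball $\overline{B}_r(0)\subset\zbE_1(J)$ once $r$ and $T$ are small.

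Next I would verify the two contraction-mapping conditions on $\overline{B}_r(0)$. For both, the key estimate combines the isomorphism bound $\|L^{-1}\|$, the local Lipschitz constant $L_F$ supplied by $F\in C^{1-}$, and a time integrability gain
\begin{equation*}
\|F(\bar u+v_1)-F(\bar u+v_2)\|_{\mathbb{E}_0(J)}\le L_F\,T^{1/p}\,\|v_1-v_2\|_{BUC(J,X_{1-1/p})}\le C\,L_F\,T^{1/p}\,\|v_1-v_2\|_{\zbE_1(J)}.
\end{equation*}
The factor $T^{1/p}$ comes from integrating a bounded function over $[0,T)$ and drives the Lipschitz constant of $\Gamma$ below $1$ for small $T$. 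For the self-mapping property one argues similarly after writing $F(\bar u+v)=[F(\bar u+v)-F(\bar u)]+F(\bar u)$ and noting $\|F(\bar u)\|_{\mathbb{E}_0(J)}\le T^{1/p}\sup_{[0,T)}\|F(\bar u(t))\|_{X_0}\to 0$ as $T\to 0$. Banach's fixed-point theorem then yields a unique $v\in\overline{B}_r(0)$, hence a solution $u=\bar u+v\in\mathbb{E}_1(J)$.

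The main obstacle is the uniformity of constants as $T\to 0$: the argument is only useful if both $\|L^{-1}\|_{\L(\mathbb{E}_0(J),\zbE_1(J))}$ and the embedding constant of $\zbE_1(J)\hookrightarrow BUC(J,X_{1-1/p})$ remain bounded as the interval shrinks. This is the technical heart, and it is where the vanishing-trace subspace is essential: functions in $\zbE_1(J)$ admit (after a suitable reflection/extension across $t=T$) an extension to $\zbE_1(\R_+)$ whose operator norm is independent of $T$, which transports the half-line maximal regularity \eqref{S3.1: MR-Lis} and the half-line trace embedding back to $[0,T)$ with $T$-independent constants. Finally, uniqueness on the full interval, beyond the ball, follows by a standard argument: two solutions coincide on a maximal subinterval by the local uniqueness just established, and on any small window their difference satisfies a linear maximal-regularity estimate which, via the same $T^{1/p}$ gain, forces them to agree throughout $[0,T)$.
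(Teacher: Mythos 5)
This theorem is quoted by the paper from Cl\'ement--Li \cite[Theorem~2.1]{CleLi93} without any in-paper proof, so there is no internal argument to compare against; your proposal reconstructs the standard proof of that cited result. Your contraction-mapping scheme --- splitting $u=\bar{u}+v$ with $v$ of vanishing temporal trace, exploiting the $T$-independence of the maximal-regularity and trace-embedding constants on the zero-trace subspace, and using the $T^{1/p}$ H\"older gain to obtain both the self-mapping and contraction properties, followed by the continuation argument for uniqueness --- is correct and is essentially the argument of Cl\'ement--Li (and of the treatment in Pr\"uss--Simonett).
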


\subsection{\bf Closed extensions of the Laplace-Beltrami operator on conic manifolds}\label{Subsection 3.2}

In this subsection, we will quote some well-established results on the closed extensions of the conic Laplace-Beltrami operator. More details of these results can be found in \cite{RoiSch14, RoiSch15,SchSei05}.

If we consider $\Delta_g$ as an unbounded operator on $\cH^{s,\gamma}_p(\M)$ with domain $C^\infty_c(\M)$,  denote its closure by $\Delta_{g, \min}=\Delta_{g,s,\min}^\gamma$, and its maximal closed extension by $\Delta_{g,\max}=\Delta_{g,s,\max}^\gamma$, where
$$
\dom(\Delta_{g,\max})=\{u\in \cH^{s,\gamma}_p(\M): \Delta_g u\in \cH^{s,\gamma}_p(\M)\}.
$$
We have
$$
\dom (\Delta_{g,\min})= \dom(\Delta_{g,\max}) \cap \bigcap_{\varepsilon>0} \cH^{s+2,\gamma+2-\varepsilon}_p(\M).
$$
In particular, $\dom (\Delta_{g,\min})=\cH^{s+2,\gamma+2}_p(\M)$ iff $\sigma_\M(\Delta_g)(z)$ is invertible  for all $z$ satisfying $\Rp z=\frac{n+1}{2}-\gamma-2$.
The reader may refer to \cite[Proposition~5.1]{SchSei05} for the details of this result.

We denote by $0=\lambda_0>\lambda_1>\cdots$ the distinct eigenvalues of $\Delta_\B$ and by $E_0, E_1, \cdots$ the
corresponding eigenspaces.
Then the non-bijectivity points of $\sigma_\M(\Delta_g)$ are exactly
$$
q^{\pm}_j= \frac{n-1}{2} \pm \sqrt{(\frac{n-1}{2})^2 -\lambda_j},\quad j\in\Nz.
$$
Note that $q^+_j= (n-1) -q^-_j$ and $q^-_0=0$. From the discussion in the previous paragraph, in case $q^{\pm}_j\neq \frac{n+1}{2}-\gamma-2$ for all $j\in \Nz$, it holds that
$$
\dom (\Delta_{g,\min})=\cH^{s+2,\gamma+2}_p(\M).
$$
For $q^{\pm}_j$ with $j\neq 0$, we define the function spaces
$$
\cE_{q^{\pm}_j}=\omega x^{-q^{\pm}_j}\otimes E_j = \{\omega(x)x^{-q^{\pm}_j} e_j(y): e_j \in E_j \}.
$$
Recall that $x$ is the singularity (boundary) defining function. When $j=0$, we put
\begin{equation*}
\cE_{q^{\pm}_0}=
\begin{cases}
\omega x^{q^{\pm}_0}\otimes E_0 \qquad &n>1;\\
\omega\otimes E_0 + \omega \log x\otimes E_0  &n=1.
\end{cases}
\end{equation*}
We will also introduce the set $I_\gamma$ defined by
$$
I_\gamma=\{q^{\pm}_j: j\in\Nz \}\cap (\frac{n+1}{2}-\gamma-2, \frac{n+1}{2}-\gamma).
$$
As a conclusion from \cite[Theorem~3.6]{GilKra06} and \cite[Proposition~5.1]{SchSei05}, we have the following proposition concerning the maximal domain of $\Delta_g$.
\begin{prop}
\label{S3.2: Prop-Delta-max-min}
Suppose that $q^{\pm}_j\neq \frac{n+1}{2}-\gamma-2$ for all $j\in \Nz$. Then
$$
\dom (\Delta_{g,\max})=\cH^{s+2,\gamma+2}_p(\M)\oplus \bigoplus_{q^{\pm}_j\in I_\gamma} \cE_{q^{\pm}_j}.
$$
\end{prop}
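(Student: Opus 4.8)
The plan is to localize near the cone tip and to invert $\Delta_g$ by the Mellin transform, reading off the singular functions as the residues picked up when the inversion contour is shifted. Outside the collar $I\times\B$ the operator $\Delta_g$ is classically elliptic on the interior of the compact manifold $\tilde{\M}$, so there interior elliptic regularity forces the maximal-domain condition $\Delta_g u\in\cH^{s,\gamma}_p(\M)$ to reduce to membership in $H^{s+2}_p$, contributing nothing to the quotient $\dom(\Delta_{g,\max})/\dom(\Delta_{g,\min})$. Hence it suffices to analyze $\omega u$ on $I\times\B$, where $\Delta_g=x^{-2}[(x\partial_x)^2+(n-1)(x\partial_x)+\Delta_\B]$.

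Writing $A=(x\partial_x)^2+(n-1)(x\partial_x)+\Delta_\B$, so that $\Delta_g=x^{-2}A$ and $\sigma_\M(\Delta_g)(z)=z^2-(n-1)z+\Delta_\B$ under $x\partial_x\mapsto z$, I would apply the Mellin transform in $x$. By the relation $M(x\partial_x f)(z)=zMf(z)$ recalled above, $A$ becomes multiplication by the conormal symbol, viewed as a holomorphic family of elliptic operators $H^{s+2}_p(\B)\to H^s_p(\B)$. For $u\in\dom(\Delta_{g,\max})$ set $f=\Delta_g u\in\cH^{s,\gamma}_p(\M)$; multiplying by $x^2$ raises the weight by two, so that $A(\omega u)$ equals $x^2 f$ modulo lower-order smooth terms supported away from the tip, hence lies in $\cH^{s,\gamma+2}_p(\M)$. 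Inverting $A$ by the inverse Mellin transform, the representation of $\omega u$ that lands in the higher-regularity space $\cH^{s+2,\gamma+2}_p(\M)$ integrates $\sigma_\M(\Delta_g)(z)^{-1}\,M(x^2f)(z)$ along the vertical line $\Rp z=\frac{n+1}{2}-(\gamma+2)$, whereas $\omega u$ itself, being only in $\cH^{s,\gamma}_p(\M)$, is represented along $\Rp z=\frac{n+1}{2}-\gamma$.

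The hypothesis $q^{\pm}_j\neq\frac{n+1}{2}-\gamma-2$ ensures the first contour avoids every pole of $\sigma_\M(\Delta_g)(z)^{-1}$, so that integral is well defined. By the residue theorem the difference of the two representations is the sum of the residues of $\sigma_\M(\Delta_g)(z)^{-1}\,M(x^2f)(z)$ at the poles $z=q^{\pm}_j$ in the open strip $\frac{n+1}{2}-\gamma-2<\Rp z<\frac{n+1}{2}-\gamma$, that is, at exactly those $q^{\pm}_j\in I_\gamma$. Since $\sigma_\M(\Delta_g)(z)^{-1}$ has a pole at $q^{\pm}_j$ only on the eigenspace $E_j$, each residue is of the form $\omega(x)x^{-q^{\pm}_j}e_j(y)$ with $e_j\in E_j$, a factor $\log x$ appearing when $n=1$ and $j=0$ where $q^{+}_0=q^{-}_0=0$ is a double root; this is precisely an element of $\cE_{q^{\pm}_j}$. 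Thus $\omega u$ splits as an element of $\cH^{s+2,\gamma+2}_p(\M)$ plus a finite sum over $\cE_{q^{\pm}_j}$ with $q^{\pm}_j\in I_\gamma$, and conversely each such singular function clearly belongs to $\dom(\Delta_{g,\max})$. The sum is direct because the distinct powers $x^{-q^{\pm}_j}$ coupled to linearly independent eigendirections cannot cancel, while none of the singular functions lies in $\cH^{s+2,\gamma+2}_p(\M)=\dom(\Delta_{g,\min})$ for $q^{\pm}_j\in I_\gamma$.

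The main obstacle is to verify that the shifted integral genuinely lands in $\cH^{s+2,\gamma+2}_p(\M)$, which is an operator-valued Mellin multiplier estimate for $\sigma_\M(\Delta_g)(z)^{-1}$ along the vertical lines: one needs bounds of the form $\|\sigma_\M(\Delta_g)(z)^{-1}\|_{\L(H^s_p(\B),H^{s+2}_p(\B))}\leq C\langle z\rangle^{-2}$, together with Mikhlin-type (indeed $\mathscr{R}$-bounded) control of the vertical derivatives, uniformly as $|{\rm Im}\,z|\to\infty$. These follow from the parameter-ellipticity of $z^2-(n-1)z+\Delta_\B$ and the discreteness $\lambda_j\to-\infty$ of the spectrum of $\Delta_\B$, which also guarantees that only finitely many $q^{\pm}_j$ meet any vertical strip so that the residue sum is finite. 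This quantitative step is where I would invoke the cone-calculus results of \cite{SchSei05} and \cite{GilKra06}.
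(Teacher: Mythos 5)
Your proposal is correct in outline, but there is nothing in the paper to compare it against line by line: the paper does not prove this proposition at all, stating it instead as a direct consequence of \cite[Theorem~3.6]{GilKra06} and \cite[Proposition~5.1]{SchSei05}. What you have written is essentially a reconstruction of the argument underlying those references: localize at the tip, conjugate by the Mellin transform, invert the conormal symbol, and shift the inversion contour from $\Rp z=\frac{n+1}{2}-\gamma$ to $\Rp z=\frac{n+1}{2}-\gamma-2$, collecting residues at the poles $q^{\pm}_j\in I_\gamma$; the hypothesis $q^{\pm}_j\neq\frac{n+1}{2}-\gamma-2$ is precisely what keeps the target contour pole-free, and the double root $q^{\pm}_0=0$ when $n=1$ produces the $\log x$ term, matching the paper's definition of $\cE_{q^{\pm}_0}$ in that case. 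Your residue computation yields the model-harmonic functions $\omega x^{-q^{\pm}_j}e_j$, which agrees with \cite{SchSei05} and in fact suggests that the paper's formula $\omega x^{q^{\pm}_0}\otimes E_0$ for $n>1$ carries a sign typo; this is harmless for the paper, which only ever uses $q^-_0=0$. What your route buys is transparency: it shows where $I_\gamma$ and the singular spaces come from, rather than treating the decomposition as a black box. What it does not buy is independence from the literature: the two genuinely hard steps --- the operator-valued Mellin multiplier estimate guaranteeing that the shifted integral lands in $\cH^{s+2,\gamma+2}_p(\M)$ (equivalently, that $\dom(\Delta_{g,\min})$ equals $\cH^{s+2,\gamma+2}_p(\M)$ under the pole-avoidance hypothesis), and the decay of $\sigma_\M(\Delta_g)(z)^{-1}$ as $|{\rm Im}\,z|\to\infty$ needed to justify closing the contour --- are deferred by you to the very same references the paper cites, so in substance your proof and the paper's citation rest on the same foundation. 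One small inaccuracy: the cutoff error $[\Delta_g,\omega]u$ is not ``smooth,'' only $H^{s+1}_{p}$ after the interior elliptic regularity step you correctly performed first; since it is compactly supported away from the tip this suffices, but the order of these two steps matters and should be stated explicitly.
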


Given a subspace $\underline{\cE}_{q^{\pm}_j}$ of $\cE_{q^{\pm}_j}$, we associate with it a subspace $\underline{\cE}_{q^{\pm}_j}^\perp$ according to the following rules:
\begin{itemize}
\item[(i)] Suppose that either $q^{\pm}_j\neq 0$ or $n>1$. If $\underline{\cE}_{q^{\pm}_j}=\omega x^{-q^{\pm}_j}\otimes \underline{E}_j$ for some subspace $\underline{E}_j\subset E_j$, then we define
$$
\underline{\cE}_{q^{\pm}_j}^\perp=\omega x^{-q^{\mp}_j}\otimes \underline{E}_j^\perp,
$$
where $\underline{E}_j^\perp$ is the orthogonal complement of $\underline{E}_j$ in $E_j$ with respect to the $L_2(\B)$-inner product.
\item[(ii)] When $q^{\pm}_0= 0$ and $n=1$, define $\underline{\cE}_0^\perp=\{0\}$ if $\underline{\cE}_0=\cE_0$,
$\underline{\cE}_0^\perp=\cE_0$ if $\underline{\cE}_0=\{0\}$, $\underline{\cE}_0^\perp=\underline{\cE}_0$ if $\underline{\cE}_0=\omega \otimes E_0$.
\end{itemize}
Note that $\underline{\cE}_{q^{\pm}_j}^\perp$ is a subspace of $\cE_{q^{\mp}_j}$.

\begin{definition}
\label{S3.2: def-ext-Delta}
We define the extension $\underline{\Delta}_g=\underline{\Delta}_{g,s}^\gamma$ of $\Delta_g$ with the domain
$$
\dom(\underline{\Delta}_g)=\cH^{s+2,\gamma+2}_p(\M) \oplus \bigoplus_{q^{\pm}_j\in I_\gamma} \underline{\cE}_{q^{\pm}_j} 
$$
with $\underline{\cE}_{q^{\pm}_j}$ chosen as follows
\begin{itemize}
\item[(i)] if $q^{\pm}_j \in I_\gamma \cap I_{-\gamma}$, then $\underline{\cE}_{q^{\pm}_j}^\perp=\underline{\cE}_{q^{\mp}_j}$;
\item[(ii)] if $\gamma\geq 0$ and $q^{\pm}_j \in I_\gamma \setminus I_{-\gamma}$, then $\underline{\cE}_{q^{\pm}_j}^\perp= \cE_{q^{\pm}_j}$;
\item[(iii)] if $\gamma\leq 0$ and $q^{\pm}_j \in I_\gamma \setminus I_{-\gamma}$, then
$\underline{\cE}_{q^{\pm}_j}^\perp=\{0\}$.
\end{itemize}
\end{definition}

In the sequel, we will confine us to the extension $\underline{\Delta}_g$. This is a closed extension of $\Delta_g$. Indeed by Proposition~\ref{S3.2: Prop-Delta-max-min} an extension of $\Delta_g$ is closed iff its domain is of the form 
$$
\cH^{s+2,\gamma+2}_p(\M)\oplus \cE, 
$$
where $\cE$ is a subspace of $\bigoplus\limits_{q^{\pm}_j\in I_\gamma} \cE_{q^{\pm}_j}$,
as long as $q^{\pm}_j\neq \frac{n+1}{2}-\gamma-2$ for all $j\in \Nz$.

With all these preparations, we are ready to state the following result on the Laplace-Beltrami operator based on \cite[Theorem~3.11]{RoiSch15}.
\begin{theorem}
\label{S3.2: Delta-MR}
Assume $s\geq 0$, $|\gamma|<\frac{1}{2}{\rm dim}(\M)$. Then the extension $\underline{\Delta}_g$ defined in Definition~\ref{S3.2: def-ext-Delta} satisfies 
$$
c-\underline{\Delta}_g \in \mathcal{MR}_p(\cH^{s,\gamma}_p(\M))
$$
for sufficiently large $c>0$.
\end{theorem}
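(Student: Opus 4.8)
The plan is to deduce the statement from the equivalence between maximal $L_p$-regularity and $\mathscr{R}$-sectoriality recorded in Proposition~\ref{S3.1: Prop-MR-RS}. Since the Mellin Sobolev space $\cH^{s,\gamma}_p(\M)$ belongs to the class $\mathcal{HT}$, it suffices to prove that, for $c>0$ large, the shifted operator $c-\underline{\Delta}_g$ lies in $\RS(\cH^{s,\gamma}_p(\M))$ with $\mathscr{R}$-angle $\phi^R_{c-\underline{\Delta}_g}<\pi/2$. In view of the inclusions \eqref{S3.1: all classes} and the angle comparison \eqref{S3.1: all angles}, I would in fact aim for the stronger conclusion that $c-\underline{\Delta}_g$ admits a bounded $H^\infty$-calculus with angle $<\pi/2$; on an $\mathcal{HT}$ space this automatically yields the required $\mathscr{R}$-sectoriality.

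To produce the resolvent I would localize via the decomposition $u=\omega u+(1-\omega)u$. Away from the conic tip the operator $\Delta_g$ is a uniformly elliptic differential operator on a smooth compact piece, so classical parameter-elliptic theory gives $\mathscr{R}$-sectoriality with arbitrarily small angle there. The genuine work is concentrated near the singularity, where one invokes the parameter-dependent cone (Mellin) pseudodifferential calculus. Concretely, the homogeneous principal symbol of $\Delta_g$ is parameter-elliptic in $\Sigma_\theta$ for every $\theta<\pi$, while the conormal symbol $\sigma_\M(\Delta_g)(z)=z^2-(n-1)z+\Delta_\B$ is invertible along the weight line $\Rp z=\frac{n+1}{2}-\gamma-2$ precisely because the non-bijectivity points $q^\pm_j$ avoid $\frac{n+1}{2}-\gamma-2$. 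These two ellipticity conditions permit the construction of a parametrix for $\lambda-\Delta_g$ inside the calculus, and hence a genuine resolvent once $c$ is chosen large enough to push the spectrum into the half-plane $\Rp\lambda<0$.

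The decisive point is the choice of closed extension. By Proposition~\ref{S3.2: Prop-Delta-max-min} every closed extension is obtained by adjoining a subspace of the finite-dimensional asymptotics space $\bigoplus_{q^\pm_j\in I_\gamma}\cE_{q^\pm_j}$, and the selection made in Definition~\ref{S3.2: def-ext-Delta} is exactly the one compatible with the natural duality of the weight lines, as encoded by the rule $\underline{\cE}_{q^\pm_j}^\perp=\underline{\cE}_{q^\mp_j}$ in the overlapping range $I_\gamma\cap I_{-\gamma}$. With this choice, and under the hypothesis $|\gamma|<\frac{1}{2}{\rm dim}(\M)=\frac{n+1}{2}$, the results of \cite{SchSei05, RoiSch15} guarantee that the resolvent family extends over a sector of opening larger than $\pi$ and is $\mathscr{R}$-bounded there. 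This is the content of \cite[Theorem~3.11]{RoiSch15}, so my proof would amount to verifying that our weight range and extension meet its hypotheses and then transporting its conclusion.

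The main obstacle I anticipate is the $\mathscr{R}$-boundedness---as opposed to mere boundedness---of the resolvent near the cone tip and, crucially, the control of the $\mathscr{R}$-angle below $\pi/2$. Ordinary sectoriality is comparatively soft, but maximal regularity demands $\phi^R<\pi/2$, and this is exactly where both the weight restriction $|\gamma|<\frac{n+1}{2}$ and the particular extension $\underline{\Delta}_g$ are indispensable: a different extension could fail to be sectorial at all, or could have its spectrum approach the imaginary axis so that no finite shift $c$ renders the angle acceptable. Establishing the uniform operator-valued Mikhlin-type estimates on the parameter-dependent symbols that yield $\mathscr{R}$-boundedness, via a Kalton--Weis argument, is the technical heart of the matter.
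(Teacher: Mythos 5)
Your proposal takes essentially the same route as the paper: the paper's proof simply cites \cite[Theorem~3.11]{RoiSch15} (stated there as bounded imaginary powers with power angle $<\pi/2$, rather than the bounded $H^\infty$-calculus you aim for), invokes the inclusions \eqref{S3.1: all classes} and the angle comparison \eqref{S3.1: all angles} on the $\mathcal{HT}$ space $\cH^{s,\gamma}_p(\M)$, and concludes via Proposition~\ref{S3.1: Prop-MR-RS}, exactly as you do in your first and last paragraphs. Your middle paragraphs on localization, the parametrix in the parameter-dependent Mellin calculus, and the Kalton--Weis argument describe the interior of the cited theorem rather than a genuinely different proof.
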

\begin{proof}
Under the given conditions, \cite[Theorem~3.11]{RoiSch15} states that
$$
c-\underline{\Delta}_g \in \mathcal{BIP}(\cH^{s,\gamma}_p(\M))
$$
with power angle $\theta_{c-\underline{\Delta}_g}<\pi/2$,
for $c>0$ large enough. By \eqref{S3.1: all classes}, the $\mathscr{R}$-angle of $c-\underline{\Delta}_g$ is smaller than $\pi/2$.
Now the assertion follows from Proposition~\ref{S3.1: Prop-MR-RS}.
\end{proof}



\section{\bf Short time existence of the harmonic map heat flow on conic manifolds}
\label{Section 4}

Throughout, suppose that $u$ is a harmonic map heat flow from an $n+1$-dimensional conic manifold $(\M,g)$ to an $m$-dimensional smooth closed manifold $(\sN,h)$.

Recall that the harmonic map heat flow for $u:\M_T\to \sN$ is
\begin{equation}
\label{S4: HHF}
\left\{\begin{aligned}
\partial_t u - \Delta_g u &=A_g(u)(\nabla u, \nabla u)  &&\text{on}&&\M_T;\\
u(0)&=u_0  &&\text{on}&&\M,&&
\end{aligned}\right.
\end{equation}
where  the initial condition $u_0:\M\to \sN$, and 
$\M_T:=(0,T)\times\M$ for some $T>0$.

By the Nash embedding theorem, we may assume that $(\sN,h)$ is isometrically embedded into a Euclidean space $\R^L$. 
Since $\sN$ is smooth and closed, we can always find a $a$-tubular neighbourhood 
$$
\T_a:=\{z\in \R^L: {\rm dist}(z,\sN)<a\}
$$ 
for sufficiently small $a>0$ such that within which the nearest point projection $\pi_\sN$, that is the map satisfying
$$
|z-\pi_\sN(z)|={\rm dist}(z,\sN)\qquad z\in \T_a,
$$ 
is a smooth function onto $\sN$. Without confusion, we also use $\pi_\sN: \R^L \to \R^L$ to denote a smooth extension of the nearest point projection $\pi_\sN$ from $\T_a$ to $\sN$.

We will first consider $u$ as a function from $\M$ to $\R^L$. With this relaxation, let us first study an alternative problem to \eqref{S4: HHF}
\begin{equation}
\label{S4: HHF-2}
\left\{\begin{aligned}
\partial_t u - \Delta_g u &=\phi(u)(\nabla u, \nabla u)_{g^*}  &&\text{on}&&\M_T;\\
u(0)&=u_0  &&\text{on}&&\M,&&
\end{aligned}\right.
\end{equation}
with initial value $u_0:\M\to \sN$, where $\phi=\partial^2 \pi_\sN$ and 
$$\phi(u)(\nabla u,\nabla u)_{g^*} =g^{\alpha\beta}\phi(u)\big(\frac{\partial u}{\partial x_\alpha}, \frac{\partial u}{\partial x_\alpha}\big).$$

In order to apply Theorems~\ref{S3.1: Thm-MR-SL} and \ref{S3.2: Delta-MR} to \eqref{S4: HHF-2}, we will impose the following conditions.

\pagebreak

\begin{mdframed}
\begin{itemize}
\item[(A)] When $n=1,2$, we assume that 
$$
4/p<\gamma<2,\quad  p>6,
$$
and
$
\gamma<\sqrt{4-n-\lambda_1}-1.
$
\end{itemize}
\end{mdframed}
or 
\vspace{.5em}
\begin{mdframed}
\begin{itemize}
\item[(B0)] $0<\gamma<\frac{n+1}{2}$, $p>n+3$ and $\gamma+2-\frac{4}{p}>\frac{n+1}{2}$.
\item[(B1)] 
When $n=1$ and $\lambda_1<-1$, we choose $\gamma>0$ so small  that
$$
-\sqrt{-\lambda_1}<-1-\gamma.
$$
\item[(B2)] When $n=2$ and $\lambda_1<-2$, $\gamma\in (1/2,3/2)$ is close enough to $1/2$ such that
$$
q^-_1<-\frac{1}{2}-\gamma<-1.
$$
\item[(B3)] When $n\geq 3$, 
$$
q_1^- <\frac{n+1}{2}-\gamma-2.
$$
\end{itemize}
\end{mdframed}
\begin{rmk}
(i) Although (B0)-(B2) have some overlapping with (A), they can render us some flexibility in choosing the parameter $p$ when the first eigenvalue $\lambda_1$ of $\B$ satisfies the given condition.

(ii) The functions of Assumptions~(B0)-(B3) is to fulfil the conditions in Theorem~\ref{S3.2: Delta-MR} and to obtain enough regularity for the map 
$$
[u\mapsto \phi(u)(\nabla u, \nabla u)_{g^*} ].
$$
The latter only uses (B0) and will be clarified right after this remark. We will give some explanation for the former here. 

(B0) ensures that $|\gamma|<\dim(\M)/2$. We will show that, for any $s\geq 0$, 
$$
{\rm dom}\underline{\Delta}_{g,s}^\gamma= \cH^{s+2,\gamma+2}_p(\M)\oplus \C
$$
as long as (B0) and (B$j$) are met when $j=\min\{3,n \}$ for $j\in\N$. 

{\it Case 1: }
Suppose that $n\geq 3$. 
The third condition in (B0) implies 
$$
\gamma+2>\frac{n+1}{2} \Longleftrightarrow \frac{n+1}{2}-\gamma-2<0,
$$
and thus $q_0^+=n-1\geq 2>\frac{n+1}{2}-\gamma$. Combining with (B3), this yields
$$
I_\gamma=\{0\}=\{q^-_0\}. 
$$
On the other hand, $\frac{n+1}{2}+\gamma-2>0$ implies that
$$
q^-_0\in I_\gamma\setminus I_{-\gamma}.
$$
By Definition~\ref{S3.2: def-ext-Delta}(ii), we conclude
$$
\underline{\cE}_{q^-_0}=\cE_{q^-_0}=\C.
$$

{\it Case 2: } 
Suppose that $n=2$. Due to (B2), 
$$
\{q^-_0\}\in I_\gamma\setminus I_{-\gamma}
$$
and 
$
q^+_0=1>\frac{3}{2}-\gamma.
$
Hence Definition~\ref{S3.2: def-ext-Delta}(ii) again yields
$$
\underline{\cE}_{q^-_0}=\cE_{q^-_0}=\C.
$$

{\it Case 3: }
Suppose that $n=1$. Note that in this case $q^\pm_0=0$ and $q^\pm_1=\pm \sqrt{-\lambda_1}$. 
(B0) implies
$$
q^\pm_0\in I_\gamma \cap I_{-\gamma}.
$$
At the same time,  (B1) guarantees that
$$
q^-_1<-1-\gamma<1-\gamma<1<q^+_1,
$$
and thus 
$$
I_\gamma=\{0\}=\{q^\pm_0\}.
$$
Now it follows from Definition~\ref{S3.2: def-ext-Delta}(i) that
$$
\underline{\cE}^\perp_0=\underline{\cE}_0=\C.
$$

All the discussions in (ii) imply that
$$
{\rm dom}\underline{\Delta}_{g,s}^\gamma= \cH^{s+2,\gamma+2}_p(\M)\oplus \C,\quad s\geq 0.
$$
Therefore, Theorem~\ref{S3.2: Delta-MR} will apply as long as (B0) and (B$j$) are assumed when $j=\min\{3,n \}$. 

(iii) As we can see from (ii), the extra conditions on $\lambda_1$ in (B1) and  (B2) are coming from the need to avoid containing $q_1^-$ and $q^+_0$ in the interval $(\frac{n+1}{2}-\gamma-2, \frac{n+1}{2}-\gamma)$ such that
$$
{\rm dom}\underline{\Delta}_{g,s}^\gamma= \cH^{s+2,\gamma+2}_p(\M)\oplus \C.
$$
If any of $q_1^-$ and $q^+_0$ is contained in $(\frac{n+1}{2}-\gamma-2, \frac{n+1}{2}-\gamma)$, 
the asymptotic behavior of ${\rm dom}\underline{\Delta}_{g,s}^\gamma$ will become much more complicated, as indicated by Definition~\ref{S3.2: def-ext-Delta}. 
This in turn will create an essential difficulty in computing 
$$
(\cH^{s,\gamma}_p(\M),  {\rm dom}\underline{\Delta}_{g,s}^\gamma)_{1-1/p,p},
$$
and as we learn from Definition~\ref{S3.1: Def-MR}, the above interpolation space is the space of initial data of \eqref{S4: HHF}.

However, when $n\geq 3$, we have $q^+_0=n-1\geq 2$. This provides us with enough room to avoid both $q_1^-$ and $q^+_0$ in 
$(\frac{n+1}{2}-\gamma-2, \frac{n+1}{2}-\gamma)$.
\end{rmk}


In the sequel, we will always assume that  (B0) and (B$j$) hold when $j=\min\{3,n \}$.


For notational brevity, for any $\vartheta\in\R$, we put 
$$
\vartheta_{\varepsilon,p}=\vartheta+2-2/p-\varepsilon.
$$

One can compute inside $I\times \B$ that
$$
(\nabla u, \nabla v)_{g^*}  = \partial_x u \partial_x v +\frac{1}{x^2} (\nabla_{g_\B}u, \nabla_{g_\B} v)_{g^*_\B},
$$
where $g^*_\B$ is the induced cotangent metric by $g_\B$. 
Since for any $s\geq 0$ and $\vartheta\in\R$
$$
[u\mapsto \partial_x u]\in \L(\cH^{s+1,\vartheta+1}_p(\M), \cH^{s,\vartheta}_p(\M))
$$
and 
$$
[u\mapsto \frac{1}{x}\nabla_{g_\B} u]\in \L(\cH^{s+1,\vartheta+1}_p(\M), \cH^{s,\vartheta}_p(\M, T^*\B)),
$$
we can obtain the following lemma in virtue of Theorem~\ref{S2.3: ptwise-mul}(i).
\begin{lem}
\label{S4: nabla-pt-mul}
If (B0) is assumed and let $\varepsilon>0$ so small that
\begin{equation}
\label{S4: ASP on epsilon-1}
1-\frac{n+3}{p}-\varepsilon>0,
\end{equation}
then for any $s\geq 0$ 
$$
[(u,v)\mapsto (\nabla u, \nabla v)_{g^*} ]
$$
is a continuous bilinear map from 
$$
(\cH^{\sep, \gep}_p(\M)\oplus\C)\times (\cH^{\sep, \gep}_p(\M)\oplus\C)\to 
\cH^{\sep-1, 2\gep-2-\frac{n+1}{2}}_p(\M).
$$ 
\end{lem}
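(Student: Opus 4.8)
The plan is to exploit the pointwise splitting
$$
(\nabla u, \nabla v)_{g^*} = \partial_x u\,\partial_x v + \left(\frac{1}{x}\nabla_{g_\B}u,\ \frac{1}{x}\nabla_{g_\B}v\right)_{g^*_\B}
$$
valid inside $I\times\B$, and to combine the two mapping properties for $\partial_x$ and $x^{-1}\nabla_{g_\B}$ recorded just above the lemma with the pointwise multiplication Theorem~\ref{S2.3: ptwise-mul}(i). First I would dispose of the constant summands: writing $u=w+c$ and $v=w'+c'$ with $w,w'\in\cH^{\sep,\gep}_p(\M)$ and $c,c'\in\C$, both $\partial_x$ and $\nabla_{g_\B}$ annihilate the constants, so $(\nabla u,\nabla v)_{g^*}=(\nabla w,\nabla w')_{g^*}$. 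It therefore suffices to produce a continuous bilinear estimate on $\cH^{\sep,\gep}_p(\M)\times\cH^{\sep,\gep}_p(\M)$, after which the direct-sum norms control $|c|,|c'|$ for free.

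Next I would apply the two displayed mapping properties with the index $s$ replaced by $\sep-1$ and the weight $\vartheta$ by $\gep-1$ (so that $\vartheta+1=\gep$ and $s+1=\sep$), which gives
$$
\partial_x w\in\cH^{\sep-1,\gep-1}_p(\M),\qquad \frac{1}{x}\nabla_{g_\B}w\in\cH^{\sep-1,\gep-1}_p(\M,T^*\B),
$$
and likewise for $w'$, with norms controlled by those of $w,w'$ in $\cH^{\sep,\gep}_p(\M)$. I would then feed each of the two summands into Theorem~\ref{S2.3: ptwise-mul}(i) with common smoothness index $\sep-1$ and weights $\gep-1,\gep-1$; this lands the product in $\cH^{\sep-1,\ 2(\gep-1)-\frac{n+1}{2}}_p(\M)=\cH^{\sep-1,\ 2\gep-2-\frac{n+1}{2}}_p(\M)$, exactly the asserted target. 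The only hypothesis of Theorem~\ref{S2.3: ptwise-mul}(i) is $\sep-1>\frac{n+1}{p}$; since $\sep-1=s+1-\frac{2}{p}-\varepsilon$, this reads $s+\bigl(1-\frac{n+3}{p}-\varepsilon\bigr)>0$, which holds because $s\ge0$ and $\varepsilon$ was chosen to satisfy \eqref{S4: ASP on epsilon-1}. Thus \eqref{S4: ASP on epsilon-1} is precisely the condition that makes the multiplication theorem applicable, and (B0) (in particular $p>n+3$) is what guarantees such an $\varepsilon$ exists.

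The step I expect to require the most care is the spherical term, because $(\cdot,\cdot)_{g^*_\B}$ is a contraction of $T^*\B$-valued functions against the fibre cometric rather than an ordinary scalar product, so one must justify applying the Banach-valued form of Theorem~\ref{S2.3: ptwise-mul}(i). Here I would observe that the fibrewise pairing $(\xi,\eta)\mapsto(\xi,\eta)_{g^*_\B}$ is a Banach multiplication $T^*_y\B\times T^*_y\B\to\C$ with constant $1$, by the Cauchy--Schwarz inequality $|(\xi,\eta)_{g^*_\B}|\le|\xi|_{g^*_\B}|\eta|_{g^*_\B}$, uniformly in $y$ since $\B$ is compact; equivalently, in local coordinates one expands $(\cdot,\cdot)_{g^*_\B}=\sum_{i,j}g_\B^{ij}\,\partial_{y_i}(\cdot)\,\partial_{y_j}(\cdot)$ with coefficients $g_\B^{ij}$ that are smooth, bounded and independent of $x$, hence lie in $\cC_g^\infty(\M)$, and absorbs the multiplication by $g_\B^{ij}$ through Theorem~\ref{S2.3: ptwise-mul}(ii). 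Once this fibre bookkeeping is settled, adding the radial and spherical contributions and chaining the continuity of each intermediate arrow yields the desired continuous bilinear bound.
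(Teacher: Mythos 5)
Your proposal follows exactly the paper's route: the radial/spherical splitting inside $I\times\B$, the two mapping properties for $\partial_x$ and $\frac{1}{x}\nabla_{g_\B}$ applied with indices $(\sep-1,\gep-1)$, and Theorem~\ref{S2.3: ptwise-mul}(i) with equal weights, whose hypothesis $\sep-1>\frac{n+1}{p}$ is precisely what \eqref{S4: ASP on epsilon-1} together with $s\geq 0$ delivers; the reduction modulo the constant summands is also correct. This is the same argument the paper gives in the lines preceding the lemma, where the fibre pairing is left implicit.

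However, the ``fibre bookkeeping'' step, which you rightly single out as the delicate one, is handled incorrectly. The claim that the coefficients $g_\B^{ij}$, being smooth, bounded and independent of $x$, lie in $\cC_g^\infty(\M)$ is false: for a function $b=b(y)$ the conic norm contains the terms $\|(\frac{1}{x}\partial_y)^\alpha(\omega b)\|_\infty=\|\omega\, x^{-|\alpha|}\partial_y^\alpha b\|_\infty$, which are infinite whenever $\partial_y^\alpha b\not\equiv 0$ near the tip (geometrically, an angular function satisfies $|\nabla b|_g\sim 1/x$ at a conic point). Since $\sep-1=s+1-\frac{2}{p}-\varepsilon>0$ under \eqref{S4: ASP on epsilon-1}, Theorem~\ref{S2.3: ptwise-mul}(ii) would require $g_\B^{ij}\in\cC_g^k(\M)$ with $k\geq 1$, so that route is closed. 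The Cauchy--Schwarz bound alone does not repair this: Theorem~\ref{S2.3: ptwise-mul}(i) is stated for a multiplication between \emph{fixed} Banach spaces, and once the smoothness index $\sep-1$ is positive, the $\cH_p^{\sep-1,\cdot}$-norm of $g_\B^{ij}\xi_i\eta_j$ sees $y$-derivatives of the coefficients, so uniform fibrewise boundedness of the pairing is not a sufficient hypothesis. The correct place to absorb the contraction is after the conjugation $S_\gamma$ used in the proof of Theorem~\ref{S2.3: ptwise-mul}(i): there $g_\B^{ij}(y)$ is $t$-independent and smooth on the compact manifold $\B$, hence belongs to $BC^\infty(\overline{\R}_+\times\B)$ for the cylinder metric $dt^2+g_\B$ (whose $y$-derivatives carry no $1/x$ weight), and both multiplication by such functions and the fibrewise contraction of sections in $H^{\sigma}_p(\overline{\R}_+\times\B,T^*\B)$ against $g^*_\B$ are bounded by the standard multiplication theorem on the half cylinder that the paper already invokes. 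With that substitution your argument is complete.
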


Recall that $\phi=\partial^2 \pi_\sN$ and $\pi_\sN\in C^\infty(\T_a,\sN)$. By possibly further shrinking $a$, we have
$$
\phi\in BC^\infty(\T_a, \L^2(\R^L,\R^L)).
$$
Assume that the conditions of Lemma~\ref{S4: nabla-pt-mul} are satisfied.
In view of (B0), we can find $\varepsilon>0$ so small that both \eqref{S4: ASP on epsilon-1} and
\begin{equation}
\label{S4: ASP on epsilon-2}
\gamma_{\varepsilon,p}\geq \frac{n+1}{2}
\end{equation}
are fulfilled. 

Let $\p\in\sN\subset \R^L$.
For every $v=w+\p$ with $w\in \cH^{\sep, \gep}_p(\M,\sN-p)$ satisfying \eqref{S4: ASP on epsilon-1} and \eqref{S4: ASP on epsilon-2}, by Proposition~\ref{S2.2: Sobolev-embedding}, we can find a bounded neighbour $U=U(v)$ of $v$ in $\cH^{\sep, \gep}_p(\M,\R^L)\oplus \R^L$ such that
$$
{\rm Rng}(U)\subset \T_a.
$$
Then we can apply Theorem~\ref{S2.3: Nemyskii thm} and infer that the Nemyskii operator $\Phi$ induced by $\phi$ satisfies
$$
\Phi\in C^\infty(U, \cH^{[\sep],\vartheta}(\M,\L^2(\R^L,\R^L))
$$
for any $\vartheta<\frac{n+1}{2}$.

Making use of Theorem~\ref{S2.3: ptwise-mul}(i) and Lemma~\ref{S4: nabla-pt-mul}, direct computations show that
\begin{equation}
\label{S4: A-reg-1}
[u\mapsto \phi(u)(\nabla u , \nabla u)_{g^*} ]\in C^\infty(U, \cH^{\sep-1,2\gep-2+\vartheta-n-1}(\M,\R^L)).
\end{equation}
In view of the third condition in (B0) and by choosing $\varepsilon>0$ sufficiently small and $\vartheta$ close enough to $\frac{n+1}{2}$, we can always have
\begin{equation}
\label{S4: A-weight}
2\gep-2+\vartheta-n-1>\gamma.
\end{equation}

Now we put 
$$
E_0^s=\cH^{s,\gamma}_p(\M,\R^L),\qquad E_1^s=\cH^{s+2,\gamma+2}_p(\M,\R^L)\oplus\R^L,
$$
and 
$$
E_{1-1/p}^s=(\cH^{s,\gamma}_p(\M,\R^L), \cH^{s+2,\gamma+2}_p(\M,\R^L)\oplus \R^L)_{1-1/p,p}.
$$
Due to Lemma~\ref{S2.1: Sobolev-interpolation}, we have the embedding
$$
E_{1-1/p}^s\hookrightarrow  \cH^{\sep,\gep}(\M,\R^L)\oplus \R^L.
$$
Denote by $\iota:E_{1-1/p}^s\to   \cH^{\sep,\gep}(\M,\R^L)\oplus \R^L $ the inclusion map. Let $\cO=\cO(v):=\iota^{-1}(U)$ be an open subset in $E_{1-1/p}^s$. 
We thus infer from \eqref{S4: A-reg-1} and \eqref{S4: A-weight} that
\begin{equation}
\label{S4: A-reg}
[u\mapsto \phi(u)(\nabla u , \nabla u)_{g^*} ]\in C^\infty(\cO, E_0^s).
\end{equation}
Careful readers may have noticed that the regularity in \eqref{S4: A-reg}  is much stronger than what is asked by Theorem~\ref{S3.1: Thm-MR-SL}, but later we will find out that \eqref{S4: A-reg} is indeed necessary for 
obtaining a smooth solution of the harmonic map heat flow.

Now we can use \eqref{S4: A-reg} and apply Theorem~\ref{S3.1: Thm-MR-SL} to prove the local well-posedness of \eqref{S4: HHF-2}.
\begin{theorem}
\label{S4: Wellposed-HHF2}
Assume that $\gamma$ and $p$ satisfy (B0) and (B3) when $n\geq 3$ or (B0) and (B2) when $n=2$ or (B0)-(B1) when $n=1$,
and the initial condition in \eqref{S4: HHF-2} satisfies 
$$
u_0=w_0+\p\quad \text{with}\quad w_0\in \cH^{2-\frac{2}{p}+\delta, \gamma+2-\frac{2}{p}+\delta}(\M,\sN-\p) \quad \text{and} \quad \p\in\sN
$$ 
for an arbitrary positive constant $\delta$. Then \eqref{S4: HHF-2} with initial condition $u_0$ has a unique solution
$$
u\in L_p(J_T, \cH^{2,\gamma+2}_p(\M,\R^L)\oplus \R^L)\cap H^1_p(J_T,\cH^{0,\gamma}_p(\M,\R^L) )
$$
on $J_T=[0,T)$ for some $T>0$. Moreover, for any $\varepsilon>0$
$$
u\in BC(J_T, \cH^{2-\frac{2}{p}-\varepsilon, \gamma+2-\frac{2}{p}-\varepsilon}(\M,\R^L)\oplus\R^L).
$$
\end{theorem}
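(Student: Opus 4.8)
The plan is to recast the semilinear problem \eqref{S4: HHF-2} as an abstract Cauchy problem of the form \eqref{S3.1: evolution eq} and then invoke the maximal-regularity existence result Theorem~\ref{S3.1: Thm-MR-SL}. Since Theorem~\ref{S3.2: Delta-MR} supplies maximal $L_p$-regularity only for the shifted operator $c-\underline{\Delta}_g$, I would absorb the shift into the right-hand side: setting $\cA:=c-\underline{\Delta}_g$ (with $c>0$ large enough that Theorem~\ref{S3.2: Delta-MR} applies) and
$$
F(u):=cu+\phi(u)(\nabla u,\nabla u)_{g^*},
$$
the problem \eqref{S4: HHF-2} becomes $\partial_t u+\cA u=F(u)$, $u(0)=u_0$, posed in $X_0:=E_0^0=\cH^{0,\gamma}_p(\M,\R^L)$ with $X_1:=E_1^0=\cH^{2,\gamma+2}_p(\M,\R^L)\oplus\R^L$; all operators act component-wise on these $\R^L$-valued spaces.

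Next I would check the two hypotheses of Theorem~\ref{S3.1: Thm-MR-SL}. Theorem~\ref{S3.2: Delta-MR} with $s=0$ gives $\cA\in\mathcal{MR}_p(X_0)$ with $\dom(\cA)=X_1$: the requirement $|\gamma|<\tfrac12\dim\M=\tfrac{n+1}{2}$ is furnished by (B0), while the identity $\dom(\underline{\Delta}_g)=\cH^{2,\gamma+2}_p(\M)\oplus\C$ was verified in the Remark above under the running hypotheses (B0) and (B$j$), $j=\min\{3,n\}$. For the nonlinearity I would take $v=u_0$ as the base point, so that $\cO=\cO(u_0)\subset E_{1-1/p}^0=X_{1-1/p}$ is exactly the open set constructed just before \eqref{S4: A-reg}. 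As $u\mapsto cu$ is bounded linear from $X_{1-1/p}$ into $X_0$, hence smooth, the membership \eqref{S4: A-reg} (with $s=0$) yields $F\in C^\infty(\cO,X_0)\subset C^{1-}(\cO,X_0)$, as required.

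It then remains to place the initial datum in $\cO$. Applying the second embedding of Lemma~\ref{S2.1: Sobolev-interpolation} with $s=0$ and $\theta=1-1/p$ (so $2\theta=2-2/p$, the side condition $\gamma+2-\tfrac2p-\varepsilon>\tfrac{n+1}{2}$ required there being guaranteed by the third inequality of (B0) for $\varepsilon$ small), once with its free parameter equal to $\delta$ and once with an arbitrary $\varepsilon>0$, I obtain
$$
\cH^{2-\frac2p+\delta,\,\gamma+2-\frac2p+\delta}_p(\M)\oplus\C
\hookrightarrow X_{1-1/p}
\hookrightarrow \cH^{2-\frac2p-\varepsilon,\,\gamma+2-\frac2p-\varepsilon}_p(\M)\oplus\C.
$$
The left embedding, applied component-wise, shows $u_0=w_0+\p\in X_{1-1/p}$; since $u_0$ is itself an admissible base point (its range lies in $\sN\subset\T_a$ and $u_0\in\cH^{\sep,\gep}_p(\M,\R^L)\oplus\R^L$ by the further embedding into the interpolation scale), it lies in $\cO(u_0)$. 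Theorem~\ref{S3.1: Thm-MR-SL} then produces $T>0$ and a unique solution $u\in L_p(J_T,X_1)\cap H^1_p(J_T,X_0)$ on $J_T=[0,T)$. Finally, the temporal-continuity claim follows from the standard trace embedding $L_p(J_T,X_1)\cap H^1_p(J_T,X_0)\hookrightarrow BC(J_T,X_{1-1/p})$ composed with the right-hand embedding displayed above.

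Because the two genuinely substantial inputs---the maximal $L_p$-regularity of the conic Laplacian (Theorem~\ref{S3.2: Delta-MR}) and the $C^\infty$-regularity of the geometric nonlinearity \eqref{S4: A-reg}---are already established, the remaining work is largely organizational. I expect the only delicate points to be the exact identification of the trace space $X_{1-1/p}$ inside the Mellin--Sobolev scale via Lemma~\ref{S2.1: Sobolev-interpolation}, and the fact that $\phi=\partial^2\pi_\sN$ is defined only on the tubular neighbourhood $\T_a$. The latter is what forces the localization to the open set $\cO$ and the range condition ${\rm Rng}(u_0)\subset\T_a$, and hence the reason the abstract theorem must be applied on an open subset of $X_{1-1/p}$ rather than on the whole space.
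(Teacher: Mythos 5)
Your proposal follows essentially the same route as the paper: the paper likewise applies Theorem~\ref{S3.1: Thm-MR-SL} with $X_0=\cH^{0,\gamma}_p(\M,\R^L)$ and $X_1=\cH^{2,\gamma+2}_p(\M,\R^L)\oplus\R^L$, using Theorem~\ref{S3.2: Delta-MR} together with the domain identification in the preceding Remark for the maximal regularity hypothesis, \eqref{S4: A-reg} for the nonlinearity, and then [Ama95, Chapter III, Theorem~4.10.2] combined with Lemma~\ref{S2.1: Sobolev-interpolation} for the $BC$-in-time statement. Your explicit absorption of the shift $c$ into $F(u):=cu+\phi(u)(\nabla u,\nabla u)_{g^*}$ and your verification that $u_0$ lies in the trace space are details the paper leaves implicit, but they constitute the same argument, correctly carried out.
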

\begin{proof}
We only need to show the last inclusion. It follows from \cite[Chapter III: Theorem~4.10.2]{Ama95} that
$$
u\in BC(J_T, (\cH^{0,\gamma}_p(\M,\R^L), \cH^{2,\gamma+2}_p(\M,\R^L)\oplus \R^L)_{1-1/p,p}).
$$
Now the inclusion is a direct result of Lemma~\ref{S2.1: Sobolev-interpolation}.
\end{proof}
\begin{remark}
\label{S4: HHF2 higher regularity}
Because the argument leading to Theorem~\ref{S4: Wellposed-HHF2} is independent of the choice $s\geq 0$, once (B0) and (B$j$) are assumed with $j=\min\{3,n\}$, for every $s\geq 0$ and
$$
u_0=w_0+\p\quad \text{with}\quad w_0\in \cH^{s+2-\frac{2}{p}+\delta, \gamma+2-\frac{2}{p}+\delta}(\M,\sN-\p) \quad \text{and} \quad \p\in\sN,
$$ 
\eqref{S4: HHF-2} with initial condition $u_0$ has a unique solution
$$
u\in L_p(J_T, \cH^{s+2,\gamma+2}_p(\M,\R^L)\oplus \R^L)\cap H^1_p(J_T,\cH^{s,\gamma}_p(\M,\R^L) )
$$
on $J_T=[0,T)$ for some $T=T(s)>0$.
In addition,
$$
u\in BC(J_T, \cH^{s+2-\frac{2}{p}-\varepsilon, \gamma+2-\frac{2}{p}-\varepsilon}(\M,\R^L)\oplus\R^L).
$$
\end{remark}


In the rest of this section, our objective is to show that the solution $u$ obtained in Theorem~\ref{S4: Wellposed-HHF2} actually solves \eqref{S4: HHF}. To this end, we put
$$
\rho(t)=|\pi_\sN (u(t)) - u(t) |^2.
$$
Direct computations show that
$$
\partial_t \rho= 2 (\pi_\sN(u)-u, \partial \pi_\sN(u) \partial_t u - \partial_t u),
$$
where $(\cdot,\cdot)$ is the standard inner product in $\R^L$, and
\begin{align*}
\Delta_g \rho= & 2 |\nabla (\pi_\sN(u)-u)|_g^2 + 2(\pi_\sN(u)-u, \partial\pi_\sN(u)\Delta_g u - \Delta_g u )\\
&+ 2 (\pi_\sN(u)-u, \partial^2 \pi_\sN(u)(\nabla u, \nabla u)_{g^*} ).
\end{align*}

Suppose that $z\in \T_a$ with $\pi_\sN(z)=z_0\in \sN$. Then there exist some $\nu\in (T_{z_0}\sN)^\perp$ and $s_z\in (-a,a)$ such that
$z=z_0+s_z \nu$.
Since $\pi_\sN(z_0+s\nu)=z_0$ for all $s\in (-a,a)$, one can derive that $\partial \pi_\sN(z)\nu=0$.

In view of the above consideration, we can infer from the expressions of $\partial_t \rho$ and $\Delta_g  \rho$ that
$$
\partial_t \rho - \Delta_g \rho = -2|\nabla(\pi_\sN(u)-u)|_g^2.
$$
Since $\M$ only has isolated point singularities, multiplying the above equality by $\rho$ and integrating over $\M$ yields
$$
 \partial_t\int_\M \rho^2\, dv_g + 2 \int_\M |\nabla \rho|_{g}^2 \, dv_g \leq 0.
$$
Since $\rho(0)\equiv 0$, the above integral inequality implies 
$$
\rho(t)\equiv 0, \quad t\in J_T.
$$
We have thus proved that 
$$
\pi_\sN(u(t))=u(t),\quad t\in J_T.
$$

Now it follows from \cite[Lemma~3.2]{Mos05} that 
$$
\partial^2\pi_\sN(u)(\nabla u, \nabla u)_{g^*} =A_g(u)(\nabla u, \nabla u).
$$
Therefore $u$ is indeed a solution to \eqref{S4: HHF}.


From now on, we consider the case that Condition (A) holds.
In this situation, we build a conic manifold $(\widehat{\M},\widehat{g})$ up on the manifold $\tilde{\M}\times S^l$ with $l=3-n$.

More precisely, let $\widehat{\B}:=\B\times S^l$ equipped with the product metric $g_{\widehat{\B}}=g_\B + g_{S^l}$, where $g_{S^l}$ is the standard metric on $S^l$.
Near the conic singularities, we equip $(0,1]\times (\B\times S^l)$ with the metric
$$
\widehat{g}=dx^2 +x^2 g_{\widehat{\B}},
$$
and, outside $(0,1]\times (\B\times S^l)$, $\widehat{\M}$ is endowed with the product metric $\tilde{g} + g_{S^l}$.

For every $u_0=w_0+\p$ with $w_0\in \cH^{2-\frac{2}{p}+\delta,\gamma+2-\frac{2}{p}+\delta}(\M,\sN-\p)$, we can define a new initial datum $\widehat{u}_0(z,\theta)=u_0(z)$, where $\theta$ is the coordinates for  $S^l$ and $z\in\M$. It is not hard to see that $\widehat{u}_0 = \widehat{w}_0 + \p$ with some $\widehat{w}_0\in  \cH^{2-\frac{2}{p}+\delta,\gamma+2-\frac{2}{p}+\delta}(\widehat{\M},\sN-\p)$. 

Note that the base $\widehat{\B}$ of the conic manifold $(\widehat{\M},\widehat{g})$ is of dimension $3$, and  Condition (A) has guaranteed that the new manifold  
$(\widehat{\M},\widehat{g})$ satisfies (B0) and  (B3). 
Therefore Theorem~\ref{S4: Wellposed-HHF2} and the above discussion imply that \eqref{S4: HHF} from $\widehat{\M} \to \sN$ with initial condition $\widehat{u}_0$ has a unique solution
$$
\widehat{u} \in L_p(J_T, \cH^{2,\gamma+2}_p(\widehat{\M},\R^L)\oplus \R^L)\cap H^1_p(J_T,\cH^{0,\gamma}_p(\widehat{\M},\sN) ).
$$
Suppose that $R$ is an arbitrary rotation matrix in $\R^{l+1}$, i.e. $R\in SO(l+1)$. Define
$$
v_{R}(z,\theta):=v(z,R\theta); \ {\rm{and}}\ \ u_R(z,\theta, t):=u(z, R\theta, t) \ {\rm{for}}\ (z,\theta)\in \M\times S^l, \ t\ge 0.
$$
Since the Dirichlet energy~\eqref{S1: Energy} is invariant under $R\in SO(l+1)$, i.e. $E(v)=E(v_R) $ and \eqref{S4: HHF} is the negative $L_2$-gradient flow of \eqref{S1: Energy},
$\widehat{u}_R$ too solves \eqref{S1: HHF} for any $R\in SO(l+1)$.

In view of $\widehat{u}_0=(\widehat{u}_{0})_R$ for any $R\in SO(l+1)$, it follows from the uniqueness of solution to \eqref{S1: HHF} that 
$\widehat{u}_R(z,\theta, t)=\widehat{u}(z,\theta, t)$ on $\widehat{\M}\times [0,T)$ for any $R\in SO(l+1)$.
Hence $\widehat{u}$ is indeed independent of the spherical variable $\theta\in S^l$.

Now we can just look at a hypersurface $\{\theta=\theta_0\}$ of  $\widehat{\M}$, the restriction of $\widehat{u}$ on this hypersurface generates a solution
$$
u \in L_p(J_T, \cH^{2,\gamma+2}_p(\M,\R^L)\oplus \R^L)\cap H^1_p(J_T,\cH^{0,\gamma}_p(\M,\sN) )
$$
to \eqref{S4: HHF} from $\M\to \sN$ with initial value $u_0$.
To prove the uniqueness of  solutions, we assume that $v$ is another solution to \eqref{S4: HHF} from $\M\to \sN$ with initial value $u_0$ that is different from $u$. 
Now $\widehat{v}(\cdot, \theta)= v(\cdot )$ also solves \eqref{S4: HHF} from $\widehat{\M}\to \sN$ with initial value $\widehat{u}_0$, which is different from
$\widehat{u}$. This violates the uniqueness of solutions to \eqref{S1: HHF} in the case $n=3$.

To sum up, we are ready to state the first  main theorem of this article.
\begin{theorem}
\label{S4: main thm: wellposed-HHF}
Assume that 
\begin{itemize}
\item $\gamma$ and $p$ satisfy (B0) and (B3) when $n\geq 3$; or
\item when $n=1,2$,  $\gamma$ and $p$ satisfy (A) or satisfy (B0)-(B1) for $n=1$ or  satisfy (B0) and (B2) for $n=2$. 
\end{itemize}
Let  
$$
u_0=w_0+\p\quad \text{with}\quad w_0\in \cH^{2-\frac{2}{p}+\delta, \gamma+2-\frac{2}{p}+\delta}_p(\M,\sN-\p) \quad \text{and} \quad \p\in\sN
$$ 
for an arbitrary positive constant $\delta$. Then \eqref{S4: HHF} with initial condition $u_0$ has a unique solution
\begin{align*}
u\in &L_p(J_T, \cH^{2,\gamma+2}_p(\M,\R^L)\oplus \R^L)\cap H^1_p(J_T,\cH^{0,\gamma}_p(\M,\sN) )\\
&\cap BC(J_T, \cH^{2-\frac{2}{p}-\varepsilon, \gamma+2-\frac{2}{p}-\varepsilon}_p(\M,\R^L)\oplus\R^L)
\end{align*}
on $J_T=[0,T)$ for some $T>0$ and any $\varepsilon>0$. Moreover, for any $\alpha>0$ and $k\in\Nz$
\begin{equation}
\label{S4: sol-reg 1}
u\in H^k_p((\alpha,T), \cH^{k+2,\gamma+2}_p(\M,\R^L)\oplus \R^L)\cap H^{k+1}_p((\alpha,T),\cH^{k,\gamma}_p(\M,\sN) )
\end{equation}
and
\begin{equation}
\label{S4: sol-reg 2}
u\in BC^\infty((\alpha,T), \cH^{\infty, \gamma+2-\frac{2}{p}-\varepsilon}_p(\M,\R^L)\oplus\R^L)\cap C^\infty((0,T),\cH^{\infty, \gamma+2}_p(\M,\R^L)\oplus\R^L).
\end{equation}
\end{theorem}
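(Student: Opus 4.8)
The existence and uniqueness of $u$, together with the first $BC$-inclusion, have already been obtained in the discussion preceding the theorem, via Theorem~\ref{S4: Wellposed-HHF2}, the projection argument showing $\pi_\sN(u)=u$, and the dimensional lifting trick under Condition~(A). The plan is therefore to establish the parabolic smoothing estimates \eqref{S4: sol-reg 1} and \eqref{S4: sol-reg 2}. To this end I would rewrite \eqref{S4: HHF-2} in the autonomous form $\partial_t u + \mathcal{A}u = \tilde{F}(u)$ on $J_T$, where $\mathcal{A}:=c-\underline{\Delta}_g$ and $\tilde{F}(u):=\phi(u)(\nabla u,\nabla u)_{g^*}+cu$, with $c>0$ chosen as in Theorem~\ref{S3.2: Delta-MR} so that $\mathcal{A}\in\mathcal{MR}_p(E_0^s)$ with $\dom(\mathcal{A})=E_1^s$ for every $s\ge 0$, while $\tilde{F}\in C^\infty(\cO,E_0^s)$ by \eqref{S4: A-reg}.

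The temporal regularity I would extract by the parameter trick. For $\lambda$ in a small neighborhood of $1$ the rescaled function $u_\lambda(t):=u(\lambda t)$ solves $\partial_t u_\lambda + \lambda\mathcal{A}u_\lambda = \lambda\tilde{F}(u_\lambda)$ with the same initial value $u_0$. I would consider the map $\Psi(\lambda,v):=(\partial_t v+\lambda\mathcal{A}v-\lambda\tilde{F}(v),\,\gamma_0 v-u_0)$ on a neighborhood of $(1,u)$ in $\R\times(L_p(J,E_1^s)\cap H^1_p(J,E_0^s))$ with values in $L_p(J,E_0^s)\times E_{1-1/p}^s$. Since $\lambda\mathcal{A}$ remains $\mathscr{R}$-sectorial with angle below $\pi/2$ for $\lambda$ near $1$, so that $\mathcal{A}$ has maximal regularity on $J$, and $\tilde{F}$ is smooth, $\Psi$ is of class $C^\infty$, $\Psi(1,u)=0$, and $D_v\Psi(1,u)=(\partial_t+\mathcal{A}-D\tilde{F}(u),\gamma_0)$ is an isomorphism because $D\tilde{F}(u(\cdot))$ is a lower-order perturbation of $\mathcal{A}$ preserving maximal regularity. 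By the implicit function theorem $\lambda\mapsto u_\lambda$ is $C^\infty$ into $L_p(J,E_1^s)\cap H^1_p(J,E_0^s)$; differentiating at $\lambda=1$ repeatedly shows that $[t\mapsto t^j\partial_t^j u]$ lies in this space for all $j$, whence $u\in H^{k+1}_p((\alpha,T),E_0^s)\cap H^k_p((\alpha,T),E_1^s)$ for every $\alpha\in(0,T)$ and $k\in\Nz$.

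To raise the spatial order and reach $\cH^{\infty}_p$ I would run an induction alternating two mechanisms. Reading \eqref{S4: HHF-2} as $\underline{\Delta}_g u(t)=\partial_t u(t)-\phi(u(t))(\nabla u(t),\nabla u(t))_{g^*}$ at a fixed $t>0$, and using $\dom(\underline{\Delta}_{g,s}^\gamma)=\cH^{s+2,\gamma+2}_p(\M)\oplus\C$ together with the elliptic mapping property of $\underline{\Delta}_g$, the spatial order of $u(t)$ is two higher than that of the right-hand side; by \eqref{S4: A-reg-1} and \eqref{S4: A-weight} the nonlinear term loses only one order and gains weight, so iterating the equation improves $u(t)$ until it is limited by the order of $\partial_t u(t)$. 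That order is then raised by restarting \eqref{S4: HHF-2} from a positive time: since $u(t)$ has been placed in a space of high spatial order while retaining weight $\gamma+2$, its fluctuation meets the hypothesis $\cH^{s+2-\frac{2}{p}+\delta,\gamma+2-\frac{2}{p}+\delta}_p$ of Remark~\ref{S4: HHF2 higher regularity} for a larger $s$, the required weight $\gamma+2-\frac{2}{p}+\delta$ lying below the available $\gamma+2$, so that $u$, and hence $\partial_t u$, lives at level $s$ on the shorter interval. Alternating these steps and applying the parameter trick at level $s=k$ yields \eqref{S4: sol-reg 1}, and letting $s\to\infty$ produces \eqref{S4: sol-reg 2}. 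Finally, since $\cH^{\infty,\gamma+2}_p(\M)$ agrees with $C^\infty$ in the interior of $\M$ and $u$ is $\sN$-valued, the combined space-time smoothness gives $u\in C^\infty((0,T)\times\M,\sN)$.

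The step I expect to be the main obstacle is justifying that $D_v\Psi(1,u)$ is an isomorphism: one must verify that $D\tilde{F}(u(\cdot))$, which contains genuine first-order terms in the fluctuation, is a perturbation of $\mathcal{A}$ small enough on short subintervals to preserve maximal regularity on all of $J$, and that the smooth, not merely $C^{1-}$, dependence furnished by \eqref{S4: A-reg} makes $\Psi$ genuinely $C^\infty$ so that all $\lambda$-derivatives exist. A secondary technical point is the bookkeeping of the Mellin weights through the alternating bootstrap, ensuring at each step that the weight produced by \eqref{S4: A-weight} stays admissible for the elliptic regularity of $\underline{\Delta}_g$ and that the available weight $\gamma+2$ continues to dominate the weight required for the restart in Remark~\ref{S4: HHF2 higher regularity}.
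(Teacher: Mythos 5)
Your proposal is correct and follows essentially the same route as the paper: the paper's proof likewise takes existence, uniqueness and the first $BC$-inclusion from the discussion preceding the theorem, obtains the temporal regularity from the parameter-trick argument of \cite[page~201]{PruSim16} together with \eqref{S4: A-reg}, and raises the spatial order by a restart bootstrap based on Remark~\ref{S4: HHF2 higher regularity} --- exactly the two mechanisms you describe. The only cosmetic differences are that the paper passes from $H^k_p$-in-time to the $BC^k$ statements in \eqref{S4: sol-reg 2} via \cite[Chapter III: Theorem~4.10.2]{Ama95} and Lemma~\ref{S2.1: Sobolev-interpolation}, and it does not need your auxiliary elliptic-regularity step, since restarting the flow alone suffices.
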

\begin{proof}
We have already shown the existence of a unique solution in
$$
L_p(J_T, \cH^{2,\gamma+2}_p(\M,\R^L)\oplus \R^L)\cap H^1_p(J_T,\cH^{0,\gamma}_p(\M,\sN) ).
$$
Based on the argument on  \cite[page~201]{PruSim16} and \eqref{S4: A-reg}, we can conclude that for any $\alpha\in (0,T)$ and $k\in \Nz$
$$
u\in H^k_p((\alpha,T), \cH^{2,\gamma+2}_p(\M,\R^L)\oplus \R^L)\cap H^{k+1}_p((\alpha,T), \cH^{0,\gamma}_p(\M,\sN) ).
$$
In view of Remark~\ref{S4: HHF2 higher regularity}, we can apply a bootstrapping argument to improve the spatial 
regularity to 
$$
u\in  H^k_p((\alpha,T), \cH^{l+2,\gamma+2}_p(\M,\R^L)\oplus \R^L) \cap H^{k+1}_p((\alpha,T), \cH^{l,\gamma}_p(\M,\sN) )
$$
for arbitrary $ l\geq 0$. 
Utilizing \cite[Chapter III: Theorem~4.10.2]{Ama95} and Lemma~\ref{S2.1: Sobolev-interpolation}, we thus have
$$
u\in BC^k((\alpha,T), \cH^{l_{\varepsilon,p}, \gamma_{\varepsilon,p}}(\M,\R^L)\oplus\R^L)
$$
for all $\alpha\in (0,T)$, $l\geq 0$, $k \in\Nz$ and $\varepsilon>0$ small. Therefore, we have obtained the extra temporal and spatial regularity in \eqref{S4: sol-reg 1} and \eqref{S4: sol-reg 2}.
\end{proof}



\section{\bf Global existence of the harmonic map heat flow into manifolds with nonpositive sectional curvature}
\label{Section 5}

In this section, we assume that the sectional  curvature $K_\sN$ of $\sN$ is nonpositive, and the following condition
\begin{mdframed}
\begin{itemize}
\item[(A')] When $n=1,2$, we assume that 
$$
4/p<\gamma<\frac{n+1}{2},\quad  p>6,
$$
and
$
\gamma<\sqrt{4-n-\lambda_1}-1.
$
\end{itemize}
\end{mdframed}
or (B0) and  (B$j$) in Section~\ref{Section 4}  hold for $j=\min\{3,n\}$. 
\begin{remark}
Assumption (A') is slightly stronger than (A), as the upper bound $\frac{n+1}{2}$ of $\gamma$ in (A') is smaller than $2$. We need this stronger upper bound  $\frac{n+1}{2}$ to derive an estimate for the nonlinear term $A_g(u)(\nabla u,\nabla u)$ in the space $L_p(J_T, \cH^{0,\gamma}_p(\M,\R^L))$ (cf. \eqref{S5: est A(u)}) by means of a gradient estimate of the solution $u$.
\end{remark}

Because of the compactness of $(\tilde{\M},\tilde{g})$, we can find a constant $C>0$ such that the Ricci curvature  $\Ric_{\tilde{g}}$ of $(\tilde{\M},\tilde{g})$
satisfies 
\begin{equation}
\label{S5: Ric bdd}
\Ric_{\tilde{g}}\geq -C.
\end{equation}

We will assume the initial condition $u_0$ satisfies
\begin{mdframed}
\begin{itemize}
\item[(IC)] $u_0=w_0+\p$, for some $\p\in\sN$, with $w_0\in  \cH^{2-\frac{2}{p}+\delta, \gamma+2-\frac{2}{p}+\delta}_p(\M,\sN-\p)$ and $ \|\nabla w_0\|_\infty<\infty  $. 
\end{itemize}
\end{mdframed}

Assume that $\mho$ is a geometric object on $(\M,g)$ defined in terms of the metric $g$. We use $\mho(\B)$ to  denote the corresponding object on $(\B,g_\B)$ defined with respect to $g_\B$.

Suppose that the local patches in $I\times\B$ are of the form $I\times O_j$, where $O_j$ are the local patches of $\B$.

In local coordinates, the Ricci curvature of $(\M,g)$ is of the form
$\Ric_g= R_{ik} dx^i \otimes dx^k$, where
\begin{equation}
\label{S5: local-Ricci}
R_{ik}=\partial_j \Gamma^j_{ki} -\partial_k \Gamma^j_{ji} + \Gamma^j_{jl} \Gamma^l_{ki} - \Gamma^j_{kl} \Gamma^l_{ji}.
\end{equation}
Here $\Gamma^j_{ki}$ are the Christoffel symbols of $g$. 

Let us first compute the Christoffel symbols of $g$ in $I\times \B$. 
In the following calculations, we will use $i,j,k,l$ to denote the subscripts of the local coordinates in $(\B,g_\B)$ and $x$ to denoted the coordinates in $I$ as usual. 
Careful computations yield
\begin{align*}
\Gamma^j_{ki}= \Gamma^j_{ki}(\B),\quad \Gamma^x_{ij}=-x g_{ij}(\B),\quad \Gamma^x_{xj}=\Gamma^i_{xx}=0,\quad \Gamma^i_{xj}=\frac{1}{x}\delta^i_j.
\end{align*}
Plugging the above expressions into \eqref{S5: local-Ricci}, we can obtain
$$
R_{ik}=R_{ik}(\B)-(n-1)g_{ik}(\B),\quad R_{xx}=0,\quad R_{xk}=0.
$$
Therefore, the Ricci curvature of $(\M,g)$ is of the form
$$
\Ric_g=\Ric_{g_\B} -(n-1)g_\B
$$
in $I\times \B$, and thus is globally bounded from below on $\M$.

To prove that  the solution $u$ to \eqref{S4: HHF} with initial value $u_0$ satisfying (IC)  is global. We argue by contradiction.
For, otherwise, we can assume that $u$ has its maximal existence time $0<T_\max<\infty$.

Let $e(u):=|\nabla u|_g^2$ be the energy density.  Now we can follow the argument in \cite[Lemma~5.3.3]{LinWang08} and prove that
$$
\partial_t e(u) -\Delta_g e(u)\leq M e(u) \ {\rm{on}}\  \M_{T_\max}
$$
for some $M>0$. It yields that
$$
(\partial_t -\Delta_g)(e^{-Mt}e(u))\leq 0 \ {\rm{on}}\  \M_{T_\max}. 
$$
Define
\begin{align*}
f_+=
\begin{cases}
f \quad &\text{if }f\geq 0;\\
0 &\text{if }f<0.
\end{cases}
\end{align*}
Let $v(x,t)=(e^{-Mt}e(u)(x,t) -\|e(u_0)\|_\infty)_+$ for $(x,t)\in \M_{T_\max}$.
Then
$$
(\partial_t -\Delta_g) v \leq 0  \ {\rm{on}}\  \M_{T_\max}.
$$
Since $v(0)\equiv 0$, multiplying the above inequality by $v(x,t)$ and integrating over $\M$ gives us
$$
(e^{-Mt}e(u)(x,t) - \|e(u_0)\|_\infty)_+ \equiv 0 \quad \text{for all }(x,t)\in \M_{T_\max}.
$$
This implies that 
$$
e^{-Mt}e(u)(x, t)) \leq \|e(u_0)\|_\infty \quad \text{for all }(x,t)\in  \M_{T_\max},
$$
and thus 
$$
|\nabla u|_g^2(x,t)\leq e^{M T_\max} \|e(u_0)\|_\infty \quad \text{for all }(x,t)\in\M_{T_\max}.
$$
We can infer from the above gradient estimate, Conditions  (A') and (B0) that
\begin{equation}
\label{S5: est A(u)}
A_g(u)(\nabla u, \nabla u) \in L_p(J_\max, \cH^{0,\gamma}_p(\M,\sN)).
\end{equation}
Here $J_{\max}=[0, T_{\max}]$.

Now it follows from \eqref{S3.1: MR-Lis},  \cite[Chapter III: Theorem~4.10.2]{Ama95} and Lemma~\ref{S2.1: Sobolev-interpolation}  that for any $\varepsilon>0$
\begin{align}
\label{S5: higher est}
\notag &\|u\|_{BC(J_\max, \cH^{2-\frac{2}{p}-\varepsilon, \gamma+2-\frac{2}{p}-\varepsilon}_p(\M,\R^L)\oplus\R^L)}\\
\notag \leq & \|u\|_{L_p(J_\max, \cH^{2,\gamma+2}_p(\M,\R^L)\oplus \R^L)\cap H^1_p(J_\max,\cH^{0,\gamma}_p(\M,\R^L) )}\\
\notag \leq & \|u_0\|_{\cH^{2-\frac{2}{p}+\delta, \gamma+2-\frac{2}{p}+\delta}_p(\M,\R^L)\oplus \R^L} + 
\|A_g(u)(\nabla u, \nabla u)\|_{L_p(J_\max, \cH^{0,\gamma}_p(\M,\R^L))}\\
\leq  & C
\end{align}
for some $C>0$.
From the above inequality, Theorem~\ref{S2.3: ptwise-mul}(i), Conditions  (A') and (B0), we can further infer that
$$
A_g(u)(\nabla u, \nabla u) \in L_p(J_\max, \cH^{1-\frac{2}{p}-\varepsilon,\gamma+\tilde{\delta}}_p(\M,\sN)).
$$
for some $\tilde{\delta}>0$. We can pick $\tilde{\delta}$ so small that (A') or (B0) and (B$j$) still hold for $j=\min\{3,n\}$ with $\gamma$ replaced by $\gamma+\tilde{\delta}$; 
and note that 
$$
u(\alpha)\in \cH^{2,\gamma+2}_p(\M,\R^L)\oplus \R^L \hookrightarrow \cH^{2-\frac{2}{p}+\delta^\prime, \gamma+2-\frac{2}{p}+\delta^\prime}_p(\M,\R^L)\oplus\R^L
$$ 
for all $\alpha\in (0,T_\max)$ and $\delta^\prime$ sufficiently small.

Choose $\tilde{\delta}<\delta^\prime$.
Now we can utilize the argument in \eqref{S5: higher est}  once more to obtain a higher order estimate in both  spatial regularity and the weight.
\begin{align*}
&\|u\|_{BC((\alpha, T_\max), \cH^{2-\frac{2}{p}+\tilde{\delta}-\varepsilon, \gamma+2-\frac{2}{p} + \tilde{\delta}-\varepsilon}_p(\M,\R^L)\oplus\R^L)}\\
\leq & \|u\|_{L_p((\alpha, T_\max), \cH^{2+\tilde{\delta},\gamma+2+\tilde{\delta}}_p(\M,\R^L)\oplus \R^L)\cap H^1_p((\alpha, T_\max),\cH^{\tilde{\delta},\gamma+\tilde{\delta}}_p(\M,\R^L) )}\\
\leq & \|u(\alpha)\|_{\cH^{2-\frac{2}{p}+\delta^\prime, \gamma+2-\frac{2}{p}+\delta^\prime}_p(\M,\R^L)\oplus \R^L} + 
\|A_g(u)(\nabla u, \nabla u)\|_{L_p((\alpha, T_\max), \cH^{\tilde{\delta},\gamma+\tilde{\delta}}_p(\M,\R^L))}\\
\leq  & C
\end{align*}
for some $C>0$ and arbitrary $\varepsilon>0$.

By means of Lemma~\ref{S2.1: Sobolev-iso}(ii), we can thus extract a bounded sequence 
$$\{u(t_k)\}_k \subset \cH^{2-\frac{2}{p}+\tilde{\delta}-\varepsilon, \gamma+2-\frac{2}{p} + \tilde{\delta}-\varepsilon}_p(\M,\R^L)\oplus\R^L \quad \text{with}\quad t_k\to T_\max^- $$
such that
$$
u(t_k) \to u^* \quad \text{in } \cH^{2-\frac{2}{p}+\delta, \gamma+2-\frac{2}{p}+\delta}_p(\M,\R^L)\oplus\R^L
$$
for some  $\delta>0$ and some $u^*\in \cH^{2-\frac{2}{p}+\delta, \gamma+2-\frac{2}{p}+\delta}_p(\M,\R^L)\oplus\R^L$.  
More precisely, it follows from Proposition~\ref{S2.2: Sobolev-embedding} that $u^*=w^*+z$ for some $z\in\sN$ and $w^*\in \cH^{2-\frac{2}{p}+\delta, \gamma+2-\frac{2}{p}+\delta}_p(\M,\sN-z)$. Because of the continuity of $u$ in the space 
$$
BC((\alpha, T_\max), \cH^{2-\frac{2}{p}+\tilde{\delta}-\varepsilon, \gamma+2-\frac{2}{p} + \tilde{\delta}-\varepsilon}_p(\M,\R^L)\oplus\R^L),
$$
the limit $u^*$ is independent of the choice of the sequence $\{u(t_k)\}_k$.

Therefore, we can apply Theorem~\ref{S4: main thm: wellposed-HHF} to extend $u$ beyond $T_\max$ smoothly. This leads to a contradiction. 

Let us summarize the above discussion in the following theorem.
\begin{theorem}
\label{S5: Global sol to HHF}
Suppose that the sectional curvature  of $\sN$ is nonpositive.
Assume that 
\begin{itemize}
\item $\gamma$ and $p$ satisfy (B0) and (B3) when $n\geq 3$; or
\item when $n=1,2$,  $\gamma$ and $p$ satisfy (A') or satisfy (B0)-(B1) for $n=1$ or  satisfy (B0) and (B2) for $n=2$. 
\end{itemize}
Let
$$
u_0=w_0+\p\quad \text{with}\quad \p\in\sN \quad \text{and} \quad w_0\in \cH^{2-\frac{2}{p}+\delta, \gamma+2-\frac{2}{p}+\delta}_p(\M,\sN-\p)
$$ 
for an arbitrary positive constant $\delta$; in addition $\|\nabla w_0\|_\infty<\infty$. Then \eqref{S4: HHF} with initial condition $u_0$ has a unique global solution
\begin{align*}
u\in &L_p([0,T), \cH^{2,\gamma+2}_p(\M,\R^L)\oplus \R^L)\cap H^1_p([0,T),\cH^{0,\gamma}_p(\M,\sN) )\\
&\cap BC([0,T), \cH^{2-\frac{2}{p}-\varepsilon, \gamma+2-\frac{2}{p}-\varepsilon}_p(\M,\R^L)\oplus\R^L)
\end{align*}
for any $\varepsilon>0$ and $T>0$. Moreover, for any $\alpha>0$ and $k\in\Nz$
$$
u\in H^k_p((\alpha,T), \cH^{k+2,\gamma+2}_p(\M,\R^L)\oplus \R^L)\cap H^{k+1}_p((\alpha,T),\cH^{k,\gamma}_p(\M,\sN) )
$$
and
$$
u\in BC^\infty((\alpha,T), \cH^{\infty, \gamma+2-\frac{2}{p}-\varepsilon}_p(\M,\R^L)\oplus\R^L)\cap C^\infty((0,\infty),\cH^{\infty, \gamma+2}_p(\M,\R^L)\oplus\R^L).
$$
\end{theorem}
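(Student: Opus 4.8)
The plan is to argue by contradiction: assume the maximal solution furnished by Theorem~\ref{S4: main thm: wellposed-HHF} has finite maximal existence time $T_\max<\infty$, and derive a uniform a priori bound on $u(t)$ in the trace space $\cH^{2-\frac{2}{p}+\delta,\gamma+2-\frac{2}{p}+\delta}_p(\M,\R^L)\oplus\R^L$ as $t\to T_\max^-$ that is strong enough to restart the short-time flow at $T_\max$, thereby contradicting maximality. All the higher regularity assertions then come for free from the short-time theory applied at interior times. The crux is therefore a single a priori estimate near the putative blow-up time, and the nonpositive curvature hypothesis enters exactly there.

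First I would establish a global-in-space gradient bound. Writing $e(u)=|\nabla u|_g^2$ for the energy density, a Bochner-type computation — following the argument of \cite[Lemma~5.3.3]{LinWang08} adapted to the conic metric — should yield a differential inequality
$$
\partial_t e(u)-\Delta_g e(u)\leq M\,e(u)\quad\text{on }\M_{T_\max}
$$
for some $M>0$. The two geometric inputs are the nonpositivity of $K_\sN$, which suppresses the dangerous target-curvature term, and the lower Ricci bound $\Ric_g=\Ric_{g_\B}-(n-1)g_\B$ derived from \eqref{S5: local-Ricci}, which controls the domain-curvature term. Setting $v=(e^{-Mt}e(u)-\|e(u_0)\|_\infty)_+$ one gets $(\partial_t-\Delta_g)v\leq0$; multiplying by $v$, integrating over $\M$, and using that $v(0)\equiv0$ (here $\|e(u_0)\|_\infty<\infty$ is precisely hypothesis (IC), i.e.\ $\|\nabla w_0\|_\infty<\infty$), one concludes $v\equiv0$. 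This delivers the time-uniform bound $|\nabla u|_g^2\leq e^{MT_\max}\|e(u_0)\|_\infty$ on $\M_{T_\max}$.

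Next I would upgrade regularity by feeding this gradient bound back through maximal regularity. Since $u$ takes values in the compact $\sN$ and $|\nabla u|_g$ is now uniformly bounded, the nonlinearity satisfies $A_g(u)(\nabla u,\nabla u)\in L_p(J_\max,\cH^{0,\gamma}_p(\M,\sN))$ — this is exactly where Condition (A') (with its sharper upper bound $\gamma<\frac{n+1}{2}$) or (B0) is used, to place the quadratic term in the correct weighted space. The maximal-regularity isomorphism \eqref{S3.1: MR-Lis}, together with \cite[Chapter~III: Theorem~4.10.2]{Ama95} and Lemma~\ref{S2.1: Sobolev-interpolation}, then yields a uniform bound for $u$ in $BC(J_\max,\cH^{2-\frac{2}{p}-\varepsilon,\gamma+2-\frac{2}{p}-\varepsilon}_p)$. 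A single bootstrap step, invoking Theorem~\ref{S2.3: ptwise-mul}(i) to gain both spatial smoothness and a slightly larger weight $\gamma+\tilde\delta$, improves this to a uniform bound in $\cH^{2-\frac{2}{p}+\tilde\delta-\varepsilon,\gamma+2-\frac{2}{p}+\tilde\delta-\varepsilon}_p$ on interior intervals $(\alpha,T_\max)$.

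Finally, the gained $\tilde\delta$ in regularity and weight is precisely what activates the compact embedding of Lemma~\ref{S2.1: Sobolev-iso}(ii): along a sequence $t_k\to T_\max^-$ one extracts a limit $u(t_k)\to u^*$ in $\cH^{2-\frac{2}{p}+\delta,\gamma+2-\frac{2}{p}+\delta}_p(\M,\R^L)\oplus\R^L$, and by Proposition~\ref{S2.2: Sobolev-embedding} the limit has the admissible form $u^*=w^*+z$ with $z\in\sN$. Continuity of $u$ in the interpolation space forces $u^*$ to be independent of the subsequence, so $u^*$ is a legitimate initial datum for Theorem~\ref{S4: main thm: wellposed-HHF}, which extends $u$ smoothly past $T_\max$ — the desired contradiction. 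The higher-order temporal and spatial regularity in the conclusion is then inherited verbatim from Theorem~\ref{S4: main thm: wellposed-HHF} on $(\alpha,\infty)$. I expect the main obstacle to be the gradient estimate itself: rigorously justifying the Bochner inequality and, above all, the integration-by-parts maximum principle in the presence of the conic singularity — that is, confirming that the weighted function spaces force enough decay as $x\to0^+$ for no boundary term to survive at the cone tip.
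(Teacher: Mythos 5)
Your proposal follows the paper's own proof essentially verbatim: the same contradiction at a finite $T_{\max}$, the same Bochner-type gradient estimate via \cite[Lemma~5.3.3]{LinWang08} with the nonpositive target curvature and the lower Ricci bound $\Ric_g=\Ric_{g_\B}-(n-1)g_\B$, the same maximal-regularity bootstrap gaining the extra weight $\tilde\delta$, the same compact-embedding extraction of $u^*$ via Lemma~\ref{S2.1: Sobolev-iso}(ii), and the same restart via Theorem~\ref{S4: main thm: wellposed-HHF}. The argument is correct and matches the paper's route in every essential step, including the point you flag as delicate (the integration-by-parts maximum principle at the cone tip), which the paper handles exactly as you propose.
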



\section{Acknowledgements}
Parts of the paper were completed while the first author was a Golomb Assistant Professor at Purdue University. 
He would like to thank the staff and faculty at Purdue University for providing the friendly environment. 
The first author would also like to express his gratitude to Prof. Elmar Schrohe for helpful discussions on the cone differential operators during his stay in Leibniz University Hanover in 2015.
The second author is partially supported by NSF.

\end{document}